\newcommand{\mb}{\mathbb}
\newcommand{\mc}{\mathcal}
\newcommand{\f}{\frac}
\newcommand{\rt}{\sqrt}
\newcommand{\dd}{\partial}
\newcommand{\nn}{\nonumber}
\newcommand{\ep}{\epsilon}
\newcommand{\qd}{\quad}
\newcommand{\iy}{\infty}
\newcommand{\tn}{\textnormal}
\newtheorem{theorem}{Theorem}[section]
\newtheorem{lemma}[theorem]{Lemma}
\newtheorem{proposition}[theorem]{Proposition}
\newtheorem{defn}[theorem]{Definition}
\numberwithin{equation}{section}
\newtheorem{prop}[theorem]{Proposition}
\newcommand{\la}{\mathcal{L}}
\newcommand{\real}{\mathbb{R}}
\newcommand{\bp}{\begin{prob}}
	\newcommand{\bpr}{\begin{proof}}
		\newcommand{\epr}{\end{proof}}
	\renewcommand{\d}{\mathrm{d}}
	\newcommand{\Lc}{\mathcal{L}}
	\newcommand{\Rb}{\mathbb{R}}
	\theoremstyle{definition}
	\newcommand{\beqn}{\begin{equation}}
    \newcommand{\eeqn}{\end{equation}}
	\def\Bbar{{\overline{B}}}
	\def\aacu{{\acute{a}}}
	\def\bacu{{\acute{b}}}
	\def\cacu{{\acute{c}}}
	\def\qacu{{\acute{q}}}
	\def\uacu{{\acute{u}}}
	\def\vacu{{\acute{v}}}
	\def\wacu{{\acute{w}}}
	\def\alphaacu{{\acute{\alpha}}}
	\def\psiacu{{ \acute{\psi}}}
	\def\phiacu{{ \acute{\phi}}}
	\def\cleq{{\preccurlyeq}}
	\def\cgeq{{\succeq}}
	\def\Fcal{{\mc{F}}}
	\def\Tcal{{\mc{T}}}
	\def\Lcal{{\mc{L}}}
	\def\Lacu{{\acute{\Lcal}}}
	\def\Mcal{{\mc{M}}}
	\def\abar{{\overline{a}}}
	\def\bbar{{\overline{b}}}
	\def\cbar{{\overline{c}}}
	\def\qbar{{\overline{q}}}
	\def\ubar{{\overline{u}}}
	\def\vbar{{\overline{v}}}
	\def\smeq{{{\small =}}}
	\def\Dbar{{\overline{D}}}
	\def\pa{{\partial}}
	\def\la{\langle}
\def\ra{\rangle}
\def\ep{\epsilon}
\def\nn{\nonumber}
\def\evansibvp{\eqref{eq:evansde} - \eqref{eq:evansbc} ~}
\DeclareMathOperator*{\esssup}{ess\,sup} 
\begin{document}
	\title[Lipschitz stability for a hyperbolic inverse problem] {Point sources and stability for an inverse problem for a hyperbolic PDE with space and time dependent coefficients}
	
	\author[Krishnan, Rakesh and Senapati] {Venkateswaran P. Krishnan$^{*}$, Rakesh$^{\dag}$, and Soumen Senapati$^{\ddag}$}
	\address{$^{*}$ TIFR Centre for Applicable Mathematics, Bangalore 560065, India. 
		\newline\indent E-mail:{\tt \ vkrishnan@tifrbng.res.in}}
	\address{$^{\dag}$	Department of Mathematical Sciences, University of Delaware, Newark, DE 19716, USA.
	\newline\indent E-mail:{\tt \ rakesh@udel.edu}}
	\address{$^{\ddag}$ TIFR Centre for Applicable Mathematics, Bangalore 560065, India. 
		\newline\indent E-mail:{\tt \ soumen@tifrbng.res.in}}

 	\begin{abstract}
		We study stability aspects for the determination of space 
		and time-dependent lower order perturbations of the wave 
		operator in three space dimensions with point sources. The 
		problems under consideration here are formally determined and 
		we establish Lipschitz stability results for these problems. 
		The main tool in our analysis is a modified version of 
		Bukghe\u{\i}m-Klibanov method based on Carleman estimates.
    \end{abstract}
 	\maketitle

		\vspace{2mm}
		\textbf{Keywords:} Formally determined hyperbolic inverse problem, stability, time dependent coefficients, Carleman estimates. 
		
		\textbf{Mathematics subject classification 2010:} 35R30, 35L05, 35L20, 35K20.

\section{Introduction and main results}\label{Introduction}	

Throughout this article, all functions are real valued, $T$ denotes a positive real number, $B$ denotes the origin centered open unit ball in $\real^d$ for any positive integer $d$ and, for $\rho>0$, $\rho B$ is the origin centered open ball of radius $\rho$ in $\real^d$.

 For functions $a(x,t), c(x,t)$ and the vector field 
 $b(x,t) = (b^1(x,t), b^2(x,t), b^3(x,t))$ on $\real^3 \times \real$, define the hyperbolic operator
 		\begin{equation}\label{HyperbolicPDE}
 		\begin{aligned}
 		\mc{L}_{a,b,c}:= (\dd_t - a)^2-(\nabla - b)^2 + c
 		= \square - 2 a \dd_t + 2 b \cdot \nabla + q
 		\end{aligned}
 		\end{equation}
 		where 
 		\begin{align*}
 		    q = c - a_t + \nabla \cdot b + a^2 - |b|^2.
 		\end{align*}
 To avoid introducing too many symbols, we use $\mc{L}_{a, b, c}$ and $\Lc_{a,b,q}$ interchangeably since the form of the operator will be clear from the context.

 		Suppose $a(x, t), c(x,t)$ and $b(x,t)$ are smooth compactly supported functions and a vector field on $\mb{R}^3 \times \mb{R}$ with support in $\Bbar \times \real$. 
 		Given $\xi \in \mb{R}^3 \setminus \overline B$, $\tau \in \real$, let $U(x, t; \xi, \tau)$ be the solution of the IVP
 		\begin{align}
 		\label{IVP for Heaviside 1}  \mc{L}_{a, b, c} U(x,t;\xi,\tau) = 4 \pi H(t - \tau) \delta(x - \xi), & \qd \tn{in } \mb{R}^3 \times \mb{R}, \\
 		 \label{IVP for Heaviside 2}     U(x, t; \xi, \tau) = 0, & \qd \text{for } x \in \real^3, ~ { t < \tau},
 		\end{align}
 		 where $H$ is Heaviside function, and let $V(x, t; \xi, \tau)$ be the solution of the IVP
 		\begin{align}
 		 \label{IVP for delta 1}    \mc{L}_{a, b, c} V(x,t;\xi,\tau) = 4 \pi \delta(x - \xi, t - \tau), 
 		 & \qd \tn{in } \mb{R}^3 \times \mb{R}, \\
 		 \label{IVP for delta 2}    V(x, t; \xi, \tau) = 0, & \qd 
 		 \text{for } x \in \real^3, ~ { t < \tau}.
 		\end{align}
 		Define the forward map
 		\begin{align}\label{forward map}
 		    \mc{F} : (a, b, c) \to \left.[U, U_t, V, V_t] (x, T; \xi, \tau)\right|_{x \in \mb{R}^3,\xi \in E,\tau \in (-\iy, T] }
 		\end{align}
 		which measures the medium response at the final time $t=T$, to waves generated by a
 		point source at $\xi$ in a {\bf finite} subset $E$ of $\real^3$, with sources activated at times $\tau$ varying over the interval
 		 $(-\infty, T]$. Here $(a, b, c)$ represents the medium properties and the medium is uniform outside the cylinder $\overline B \times [0, T]$. Our goal is to study the injectivity and stability of $\mc{F}$. The problem is formally determined in the sense that the data set depends on four real parameters - three for the receiver locations
 		 $(x,t \smeq T) \in \real^3 \times \{t \smeq T\}$ and one for the time delay $\tau \in \real$ - while the unknown coefficients $(a, b, c)$ are also functions of the 
 		 four variables $(x,t) \in \real^3 \times \real$.

 		 		This work is a follow up of our previous work \cite{KRS1}, in which we derived Lipschitz stability estimates for the determination of the coefficients $a$, $b$ (up to a gauge term) and $c$ in \eqref{HyperbolicPDE} in space dimensions $d\geq 2$, where we used plane wave sources; our current work uses point sources. Our current results are only for the three dimensional case ($d=3$) since the ansatz for the fundamental solution of $\Lc_{a,b,c}$
 		 		becomes unwieldy for $d \neq 3,1$.
 		 		
 		 		Similar to the work \cite{KRS1}, we derive uniqueness and Lipschitz stability estimates for the recovery of time-dependent coefficients $a,b$ (up to a gauge term) and $c$ for a formally determined inverse problem with point sources.    Our proofs are based on suitable modifications of the ideas of Bukhge\u{i}m and Klibanov \cite{Bukhgeuim_Klibanov_Uniqueness_1981} which were based on Carleman estimates. A part of our current work as well as \cite{KRS1} used ideas from \cite{MPS2020} about extending results for the $a=0, b=0$, arbitrary $c$ case to the general $a,b,c$ case. The work \cite{MPS2020} deals with the recovery of time-independent first order coefficients of a hyperbolic PDE in a formally determined set-up as well, and extends the ideas in \cite{RS_1} 
 		 		to the general $a,b,c$ case.
 		 
 		  A point source inverse back-scattering problem in $\Rb^3$ involving the recovery of time-independent potential, with data coming from coincident source-receiver pairs varying over the surface of a sphere, was considered in \cite{RU2}. They showed the unique recovery of angularly controlled potentials, in particular, radial potentials, from such formally determined data. This was further investigated in \cite{Blasten}, where a logarithmic stability estimate for the recovery of time-independent angularly controlled potentials for the point source inverse backscattering problem was shown.

  Our earlier work \cite{KRS1} has a detailed survey of the literature on hyperbolic inverse problems for the operator $\Lc_{a,b,c}$ (and some work even for non-constant velocity) with time-independent/time-dependent coefficients where the data is either measured on the lateral boundary and on $t=T$, or the data is measured only on the lateral boundary or a part of it, or the sources are located only on the lateral boundary but not in the initial data, or the data is the far-field pattern in the frequency domain. For this reason, we do not repeat the literature review here.

 		Before discussing the main results of the article, we introduce some 
 		definitions and notation. Given $\xi \in \mb{R}^3 \setminus \overline B$ and $\tau \in \mb{R}$, 
 		define the conical region 
 		\begin{align*}
 		    Q_{\xi, \tau} = \{(x,t) \in \mb{R}^3 \times \mb{R} ;\ |x- \xi| + \tau \le t\le T\}
 		\end{align*}
 		and denote its top (horizontal) and conical boundaries by
 		\begin{align*}
 		    H_{\xi,\tau}=Q_{\xi,\tau}\cap \{t=T\}, \qquad  C_{\xi,\tau}=Q_{\xi,\tau}\cap \{t=\tau+|x-\xi|\},
 		\end{align*}
 		respectively.
 		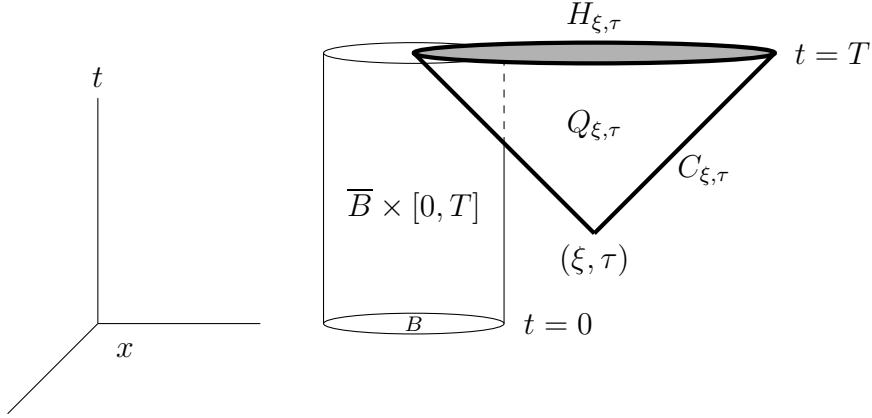
\begin{figure}[h]
 		\centering
 		\begin{tikzpicture} [scale = 1.2]
        \draw (0,0)--(1.8,0);
        \draw (0,0)--(0,2.5);
        \draw (0,0)--(-1,-1);
        \node [above] at (0,2.5) {$t$};
        \node at (0.3,-0.3) {$x$};
        \draw (3.5,3) ellipse (1 and 0.115);
        \draw (3.5,0) ellipse (1 and 0.115);
        \draw (2.5,0)--(2.5,3);
        \draw (4.5,0)--(4.5,2);
        \draw [dashed] (4.5,1.98)--(4.5,3);
        \node at (3.5,0) {\tiny{$B$}};
        \node at (3.5,1.3) {$\overline B\times [0,T]$};
        \draw (5.5,3) ellipse (2 and 0.115);
        \filldraw [ultra thick][fill=gray!60!white, draw=black] (5.5,3) ellipse (2 and 0.115);
        \draw [ultra thick] (5.5,1)--(3.5,3);
        \draw [ultra thick] (5.5,1)--(7.5,3);
        \node [below] at (5.5,1) { $(\xi, \tau)$ };
        \node at (5.5,2.2) {$Q_{\xi, \tau}$};
        \node [right] at (4.6,0) { $t=0$ };
        \node [right] at (7.6,3) { $t=T$ }; 
        \node [right] at (6.3,1.7) { $C_{\xi,\tau}$ };
        \node [above] at (5.5,3.1) {$H_{\xi,\tau}$};
 		\end{tikzpicture}
 		\caption{The conical domain $Q_{\xi, \tau}$ and its boundaries}
 		\end{figure}

 		Given $\sigma >0$, $M$ a submanifold of $\mb{R}^3\times\mb{R}$, and a function
 		$f: M \to \real$, define the weighted norms
 		\begin{align*}
 		    \|f\|_{0,M,\sigma}=\left(\int_{M}e^{2\sigma t}|f|^2\right)^{1/2}, \qd \|f\|_{1,M,\sigma}=\left(\int_{M}e^{2\sigma t}\left(|\nabla_{M}f|^2+\sigma^2|f|^2\right)\right)^{1/2}
 		\end{align*}
 		where $\nabla_{M}$ consists of the first order derivatives in directions tangential to $M$. For $x, \xi \in \real^3$, $x \neq \xi$, define
 		\begin{align}\label{defn of theta}
 		    r=|x-\xi|,\ \theta = \f{x-\xi}{|x-\xi|},\qd \dd_r = \theta\cdot\nabla. 
 		\end{align}
 
 		For a compactly supported smooth function $a$ and vector field $b$ on $\real^3 \times \real$, and 
 		$\xi \in \real^3$ such that $\{\xi\} \times \real$ is disjoint from the supports of $a,b$, define 
 		\begin{align}
 		    \alpha(x,t;\xi) =\f{1}{r}\exp\left(\int_{0}^{r}(a+\theta\cdot b)(x-s\theta,t-s)\ \d s\right), \qd x\neq \xi.
 		    \label{eq:alphadef}
 		\end{align}
 		Note that $\alpha(x,t;\xi) = r^{-1}$ in a punctured cylindrical neighborhood of $\{\xi\} \times \real$ 
 		and $\alpha$ satisfies the equivalent transport equations
 		\begin{align}\label{transport eqn}
 		   \left( \dd_t + \theta \cdot \nabla + r^{-1} \right) \alpha = ( a + \theta \cdot b ) \alpha,
 		   \qquad
 		   (\pa_t + \pa_r - (a+ \theta \cdot b))(r \alpha) =0,
 		   \qquad x \neq \xi.
 		\end{align}
 		This follows from the identity
 		\begin{align}\nn
 		  r \left( \dd_t + \theta \cdot \nabla + r^{-1} \right) \alpha=  \left( \dd_t + \dd_r \right) (r \alpha)
 		\end{align}
 		and that
 		\begin{align*}
 		& (r \alpha)^{-1} (\pa_t + \pa_r)(r \alpha)
 		\\
 		 & = \exp \left( - \int_{0}^{r} ( a + \theta \cdot b ) ( x - s \theta ,t - s)\ \d s \right) \left( \dd_t + \dd_r \right) \exp \left( \int_{0}^{r} ( a + \theta \cdot b ) ( x - s \theta ,t - s)\ \d s \right)  \\
 		 & =  \int_{0}^{r} \left( a_t + \theta \cdot b_t + a_r + \theta \cdot b_r  \right) ( x - s \theta, t - s)\ \d s +(a+\theta \cdot b) (x-r\theta, t-r)\\
 		 & = - \int_{0}^{r} \f{\d} {\d s} ( a + \theta \cdot b ) ( x - s \theta, t - s)\ \d s +(a+\theta \cdot b) (x-r\theta, t-r). \\
 		 & = ( a + \theta \cdot b ) (x, t).
 		\end{align*}
 		We also define the useful first order operators
 		\begin{align*}
 		\mc{M} = -2a\dd_t + 2b\cdot\nabla + q , \qd \Tcal = \dd_t + \theta\cdot\nabla - (a+\theta\cdot b) + r^{-1},
 		\qquad x \neq \xi;
 		\end{align*}
 		note that $\mc{M}$ is zero in a punctured cylindrical neighborhood of $\{\xi\} \times \real$ and
 		\eqref{transport eqn} may be rewritten as
 		\beqn
 		\Tcal \alpha =0, \qquad x \neq \xi.
 		\label{eq:Talpha}
 		\eeqn

 		We first address the structure of $U$, $V$ and the well-posedness of the IVPs defining $U$, $V$.
 		\begin{proposition}\label{Heaviside}
 		If $a, c,$ and $b$ are compactly supported smooth functions and a vector field on 
 		$\real^3 \times \mb{R}$, respectively and $\{\xi\} \times \Rb$ is disjoint from the support of $a,b,c$, then 
 		the IVP 
 		\begin{align}
 		 \label{IVP for Heaviside 1A} \mc{L}_{a, b, c} U(x,t;\xi,\tau) = 4 \pi H(t - \tau) \delta(x - \xi), & \qd \tn{in } \mb{R}^3 \times \mb{R}, \\
 		 \label{IVP for Heaviside 2B}     U(x,t;\xi,\tau) = 0, & \qd \text{for } x \in \real^3, ~ { t < \tau},
 		\end{align}
 		admits a unique distributional solution $U(x, t; \xi, \tau)$. Further,
 		\begin{align*}
 		    U(x, t ; \xi, \tau) = \f{ H (t- \tau - |x-\xi|) } { |x - \xi| } + u (x,t;\xi,\tau) H (t - \tau - |x-\xi|),
 		\end{align*}
 		where $u(x,t;\xi,\tau)$ is a smooth function in the region $\{(x,t) \in \mb{R}^3 \times \mb{R};\ t\ge\tau+|x-\xi|\}$ and is a smooth solution of the characteristic BVP
 		\begin{align}
 		\label{ch BVP for Heaviside 1}   \mc{L}_{a,b,c} u = -\mc{M} \left( |x-\xi|^{-1} \right), & \qd  t > \tau + |x-\xi|,\\
 		\label{ch BVP for Heaviside 2}   u (x,t;\xi,\tau) = \alpha(x,t;\xi) - |x-\xi|^{-1},  & \qd  t = \tau + |x-\xi|,\ x \neq \xi.
 		\end{align}
 		Finally, if the compactly supported coefficients $a,b,c$ satisfy  $\|[a, b, c]\|_{C^{20} (\Rb^3\times \Rb)} \le M$, then
 		\begin{align*}
 		    \|u\|_{C^3 (Q_{\xi, \tau})} \le C
 		\end{align*}
 		where $C$ depends only on $T$, $M$ and the reciprocal of the distance of $\{\xi\} \times \real$ from the 
 		support of $a,b,c$.
 		\end{proposition}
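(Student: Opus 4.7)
The plan is to use a progressing wave ansatz: seek $U$ in the form
\begin{equation*}
U(x,t;\xi,\tau) = \frac{H(t-\tau-r)}{r} + u(x,t;\xi,\tau)\,H(t-\tau-r),
\end{equation*}
where $r=|x-\xi|$ and $u$ is smooth on $Q_{\xi,\tau}$. The first term carries the leading singularity from the free-space retarded Green's function of $\Box$, while $u$ absorbs the lower-order corrections introduced by $a,b,c$. The key distributional identities needed are
\begin{equation*}
\Box\lb \frac{H(t-\tau-r)}{r}\rb = 4\pi H(t-\tau)\,\delta(x-\xi),
\qquad
\Box H(t-\tau-r) = \frac{2\,\delta(t-\tau-r)}{r},
\end{equation*}
together with $\dd_t H=\delta$ and $\nabla H = -\theta\,\delta$.

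First I would compute $\Lc_{a,b,c} U = \Box U + \mc{M} U$ distributionally and sort the output by type: a $\delta(x-\xi)$ supported on the source axis, a $\delta(t-\tau-r)$ supported on the cone $C_{\xi,\tau}$, and a smooth multiple of $H(t-\tau-r)$ supported in $Q_{\xi,\tau}$. Equating the result with $4\pi H(t-\tau)\delta(x-\xi)$, the axis term automatically matches the source, and the coefficient of $\delta(t-\tau-r)$ forces the transport identity $\Tcal u = (a+\theta\cdot b)/r$ along each null generator from $(\xi,\tau)$. Since $\Tcal\alpha=0$ by \eqref{eq:Talpha} and a one-line computation gives $\Tcal(r^{-1})=-(a+\theta\cdot b)/r$, the prescribed characteristic data $u=\alpha - r^{-1}$ on $C_{\xi,\tau}$ solves this transport equation exactly. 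Compatibility at the apex $r=0$ holds because the hypothesis that $\{\xi\}\times\real$ is disjoint from the support of $a,b$ forces $\alpha \equiv r^{-1}$ in a punctured cylindrical neighborhood of the axis, so $u$ vanishes to infinite order at the apex. The remaining smooth coefficient of $H(t-\tau-r)$ yields the interior equation $\Lc_{a,b,c}u=-\mc{M}(r^{-1})$, in which the right hand side is a genuinely smooth function on $Q_{\xi,\tau}$ since $\mc{M}$ has coefficients vanishing near $\{\xi\}\times\real$. This recovers precisely the characteristic BVP \eqref{ch BVP for Heaviside 1}-\eqref{ch BVP for Heaviside 2}.

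Next I would solve this Goursat problem. With smooth characteristic data on $C_{\xi,\tau}$ (vanishing near the apex) and a smooth right hand side on $Q_{\xi,\tau}$, the existence and uniqueness of a smooth solution $u$ on $Q_{\xi,\tau}$ is classical: one can either run a progressing wave expansion to all orders and sum, or reformulate as an integral equation along bicharacteristics and solve by Picard iteration on the truncated cone. Uniqueness of the distributional solution $U$ to the full IVP then follows from a standard energy estimate for $\Lc_{a,b,c}$: two candidate solutions differ by a smooth function vanishing for $t<\tau$ and solving the homogeneous problem, hence they coincide.

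Finally, for the $C^3$ bound I would differentiate the characteristic BVP up to order three and run energy inequalities on the truncated cone, controlling $\|u\|_{C^3(Q_{\xi,\tau})}$ in terms of sufficiently many derivatives of the characteristic datum $\alpha-r^{-1}$, of $\mc{M}(r^{-1})$, and of the coefficients $a,b,q$ themselves. Since $\alpha$ is defined by an integral of $a+\theta\cdot b$ along rays, differentiating the data already consumes several derivatives of $a,b$, and the high-order energy estimates consume several more; the hypothesis $\|[a,b,c]\|_{C^{20}}\le M$ is a comfortable margin. Dependence of the constant on $\mathrm{dist}(\{\xi\}\times\real,\,\mathrm{supp}(a,b,c))^{-1}$ enters only through the derivatives of $\alpha$ near the apex. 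The main technical obstacle is the careful bookkeeping of the distributional calculation on the characteristic cone — in particular matching the $\delta$-on-the-cone terms to obtain the transport equation and verifying the compatibility of $\alpha - r^{-1}$ with $\Tcal$ at the apex — while the subsequent existence, uniqueness, and regularity arguments for the Goursat problem are routine but lengthy.
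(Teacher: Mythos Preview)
Your ansatz and the derivation of the characteristic BVP \eqref{ch BVP for Heaviside 1}--\eqref{ch BVP for Heaviside 2} match the paper's approach exactly, including the identification of the transport equation on the cone and the verification that $\alpha - r^{-1}$ is the correct characteristic datum.

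The substantive difference is in how the Goursat problem is solved and estimated. The paper does \emph{not} invoke general Goursat theory or energy estimates on the cone. Instead it writes a \emph{finite} progressing wave expansion
\[
u(x,t)H(t-|x|) = \sum_{k=0}^{N} a_k(x,t)\,\frac{(t-|x|)_+^k}{k!} + S_N(x,t),
\]
where each $a_k$ is obtained from an explicit transport equation $\Tcal a_k = -\tfrac{1}{2}\Lcal a_{k-1}$ (solved via an integrating factor, with $\|a_k\|_{C^p}$ controlled by $\|[a,b,q]\|_{C^{p+2k}}$), and the remainder $S_N$ solves an ordinary \emph{Cauchy} IVP on $\real^3\times\real$ with right-hand side $-(\Lcal a_N)(t-|x|)_+^N/N! \in C^{N-1}$ and zero data for $t<0$. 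Standard Sobolev regularity for this Cauchy problem gives $S_N\in C^{N-3}$; taking $N=6$ and tracking the chain of constants produces the explicit $C^{19}$--$C^{20}$ requirement. Smoothness of $u$ follows because $N$ is arbitrary and the solution is unique. Your suggestion to ``run a progressing wave expansion to all orders and sum'' is not quite this: the series does not converge, and the paper's point is precisely that a finite expansion plus Cauchy remainder suffices. Your alternative of Picard iteration on the cone followed by energy estimates for the differentiated Goursat problem is viable in principle, but energy methods for characteristic problems are more delicate than for Cauchy problems, and you would have to make the derivative bookkeeping explicit to recover the stated $C^{20}$ bound.

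There is also a small gap in your uniqueness argument: the difference of two distributional solutions is a priori only a distribution, so a ``standard energy estimate'' does not apply directly. The paper proves distributional uniqueness by a duality argument --- test against $\Lcal^* w$ where $w$ is a smooth solution of a backward Cauchy problem with compactly supported data.
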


 		\begin{proposition}\label{Delta}
 		If $a,c$ and  $b$ are compactly supported smooth functions and a vector field on
 		$\real^3 \times \real$, respectively, and $\{ \xi \} \times \real$ is disjoint from the support of $a,b,c$, the IVP 
 		\begin{align}
 		 \label{IVP for delta 1A}   { \mc{L}_{a, b, q} V(x,t;\xi,\tau) = 4 \pi \delta(x - \xi, t - \tau)}, 
 		 & \qd \tn{in } \mb{R}^3 \times \mb{R}, \\
 		 \label{IVP for delta 2B}    V(x,t;\xi,\tau) = 0, & \qd 
 		 \text{for } x \in \real^3, ~ {t < \tau}.
 		\end{align}
 		admits a unique distributional solution $V(x, t; \xi, \tau)$ of the 
 		form
 		\begin{align*}
 		    V(x, t; \xi, \tau) = \alpha(x, t; \xi) \delta(t - \tau - |x - \xi|) +v(x, t; \xi, \tau)H(t - \tau - |x - \xi|)
 		\end{align*}
 		where $v(x,t;\xi,\tau)$ is a smooth function in the region $\{(x,t)\in\mb{R}^3\times\mb{R};\ t\ge\tau+|x-\xi|\}$ and solves the characteristic BVP 
 		\begin{align} 
 		 \label{ch BVP for delta 1}  \mc{L}_{a,b,c} v = 0, & \qd  t > \tau + | x - \xi|,\\
 		 \label{ch BVP for delta 2} 
 		 \Tcal v= - \f{1}{2} \mc{L}_{a,b,c} \alpha, & \qd  t = \tau + |x-\xi|,\ x \neq \xi.
 		\end{align}
 		Finally, if the compactly supported coefficients $a,b,c$ satisfy $\|[a, b, c]\|_{C^{22} (\Rb^3 \times \Rb)} \le M$, then
 		\begin{align*}
 		    \|v\|_{C^3 (Q_{\xi, \tau})} \le C
 		\end{align*}
 		where $C$ depends only on $T,M$ and the reciprocal of the distance
 		of $\{\xi\} \times \real$ from the support $a,b,c$
 		\end{proposition}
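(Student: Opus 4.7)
The plan is to verify the progressing-wave ansatz $V = \alpha(x,t;\xi)\,\delta(\phi) + v(x,t;\xi,\tau)\,H(\phi)$, with $\phi(x,t)=t-\tau-|x-\xi|$, by substituting into $\mc{L}_{a,b,c} V$ and matching distributional coefficients of $\delta'(\phi)$, $\delta(\phi)$ and $H(\phi)$. The identity \eqref{eq:Talpha} will kill the most singular $\delta'(\phi)$ coefficient, the $\delta(\phi)$ coefficient will give the characteristic boundary condition \eqref{ch BVP for delta 2}, and the $H(\phi)$ coefficient will give the interior equation \eqref{ch BVP for delta 1}; the source $4\pi\delta(x-\xi,t-\tau)$ is produced automatically from the vertex-concentrated part of the distribution $\alpha\,\delta(\phi)$.

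The key computation is the pair of distributional identities
\[ \mc{L}_{a,b,c}(f\,\delta(\phi)) = (\mc{L}_{a,b,c} f)\,\delta(\phi) + 2(\Tcal f)\,\delta'(\phi), \qquad \mc{L}_{a,b,c}(f\,H(\phi)) = (\mc{L}_{a,b,c} f)\,H(\phi) + 2(\Tcal f)\,\delta(\phi), \]
valid for $f$ smooth on a neighborhood of the cone $t \geq \tau + |x-\xi|$, obtained by expanding $\square$ and $\mc{M}$ and using $\dd_t \phi = 1$, $\nabla \phi = -\theta$, $\square \phi = 2/r$. Setting $f=\alpha$ and invoking $\Tcal\alpha = 0$ eliminates the $\delta'(\phi)$ term. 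Because $\{\xi\}\times \Rb$ is separated from the support of $a,b,c$, we have $\alpha \equiv r^{-1}$ and $\mc{L}_{a,b,c} \equiv \square$ on a cylindrical neighborhood of the vertex, so the term $(\mc{L}_{a,b,c}\alpha)\delta(\phi)$ contains the classical three-dimensional contribution $\square(r^{-1})\,\delta(\phi) = 4\pi\delta(x-\xi)\delta(t-\tau) = 4\pi\delta(x-\xi,t-\tau)$; the remainder is a smooth multiple of $\delta(\phi)$. Matching against the target source and the $H(\phi)$ piece of $\mc{L}_{a,b,c}(vH(\phi))$ forces exactly \eqref{ch BVP for delta 1} and \eqref{ch BVP for delta 2}.

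To produce a $v$ satisfying these, I would first solve the characteristic boundary condition \eqref{ch BVP for delta 2} as a first-order transport ODE along the rays $r \mapsto (\xi + r\theta, \tau + r)$ that foliate $C_{\xi,\tau}$: the substitution $w(r,\theta) = r\,v(\xi+r\theta,\tau+r)$ converts it into $w'(r) = (a+\theta\cdot b)\,w - \tfrac{r}{2}\mc{L}_{a,b,c}\alpha$ with $w(0,\theta)=0$, which has an explicit variation-of-parameters solution. The regularity of $-\tfrac12\mc{L}_{a,b,c}\alpha$ near $r=0$ is governed by the smoothness of $\alpha$ coming from \eqref{eq:alphadef}, and the factor $r$ in the ODE absorbs the $r^{-1}$ singularity of $\mc{L}_{a,b,c}\alpha$ at the vertex. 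With smooth Cauchy data $v|_{C_{\xi,\tau}}$ in hand, I would extend $v$ into $Q_{\xi,\tau}$ by solving the characteristic (Goursat) problem $\mc{L}_{a,b,c} v = 0$ via a standard Picard iteration in null coordinates. The resulting $v$ is $C^3$ with a bound depending only on $T$, $M$, and the reciprocal of the distance from $\{\xi\}\times\Rb$ to the support of the coefficients, where the $C^{22}$ hypothesis absorbs the derivatives lost in forming $\alpha \to \mc{L}_{a,b,c}\alpha$ on one hand and those consumed by the characteristic existence theory on the other.

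Uniqueness of the distributional solution follows since any two solutions $V_1,V_2$ of the IVP yield $W := V_1 - V_2$ with $\mc{L}_{a,b,c} W = 0$ and $W \equiv 0$ for $t<\tau$, hence $W \equiv 0$ by the standard finite-propagation-speed uniqueness for the hyperbolic Cauchy problem. The main anticipated obstacle is the regularity analysis at the vertex: one must show that the apparent $r^{-1}$ singularities appearing in $\mc{L}_{a,b,c}\alpha$ cancel against the factor $r$ in the transport ODE, and that the resulting Cauchy data on $C_{\xi,\tau}$ are genuinely smooth enough, with quantitative bounds, for the characteristic Goursat theory to deliver a $C^3$ interior solution with the claimed constant dependencies.
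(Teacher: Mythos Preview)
Your distributional matching of $\delta'(\phi)$, $\delta(\phi)$, $H(\phi)$ coefficients is exactly the identity the paper records as \eqref{gen expansion}, and your uniqueness argument is the same. One point needs tightening: the expansion $\mc{L}(f\delta(\phi)) = (\mc{L}f)\delta(\phi) + 2(\Tcal f)\delta'(\phi)$ is only valid where $f$ is smooth, and $\alpha = r^{-1}\exp(\cdots)$ is singular at $x=\xi$. The paper handles this by writing $\alpha\delta(\phi) = r^{-1}\delta(\phi) + (\alpha - r^{-1})\delta(\phi)$, invoking the classical identity $\square(r^{-1}\delta(t-r)) = 4\pi\delta(x,t)$ for the first summand, and applying \eqref{gen expansion} only to the second (which vanishes identically near the vertex). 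Your sentence ``$(\mc{L}_{a,b,c}\alpha)\delta(\phi)$ contains $\ldots 4\pi\delta(x-\xi,t-\tau)$'' conflates $\mc{L}(\alpha\delta(\phi))$ with $(\mc{L}\alpha)\cdot\delta(\phi)$ at the singular point, though the intended splitting is recoverable.

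The more substantial difference is in how you construct $v$. You propose solving the transport equation along the generators of $C_{\xi,\tau}$ to obtain $v|_{C_{\xi,\tau}}$ (this coincides with the paper's computation of the leading expansion coefficient $b_0$ via Lemma \ref{lemma:Tfg}), and then invoking a 3D Goursat solver with that characteristic-cone data. The paper instead runs a full progressing-wave expansion $vH(\phi) = \sum_{k=0}^N b_k\,\phi_+^k/k! + R_N$, solving the recursive transport equations $\Tcal b_k = -\tfrac12\mc{L}b_{k-1}$ so that the remainder $R_N$ satisfies a \emph{non-characteristic} Cauchy problem $\mc{L}R_N = -(\mc{L}b_N)\phi_+^N/N!$ with vanishing data on $t<0$ and a $C^{N-1}$ right-hand side. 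That reduces to standard energy-method well-posedness (Proposition \ref{prop:infivp}), and the $C^3$ bound with the stated constant dependencies follows by tracking derivative losses through the $b_k$ and $R_N$. Your Goursat route is in principle legitimate, but ``Picard iteration in null coordinates'' is a $1{+}1$-dimensional idiom; the characteristic Cauchy problem on a light cone in $3{+}1$ dimensions requires real machinery (spherical-means arguments or Cagnac--Dossa type results), and you would have to supply or cite a quantitative $C^3$ existence theorem for it to deliver the claimed estimate. The paper's expansion buys a self-contained, elementary construction at the price of a few more transport integrations.
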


 		For future use we make several observations about $u$ and $v$.
 		\begin{itemize}
 		\item Since $a,b,c$ are supported away from $(x \smeq \xi, t \smeq \tau)$, in some neighborhood 
 		of $(x \smeq \xi, t \smeq \tau)$ we have
 		\begin{gather*}
 		 \mc{M}=0, ~~~ \alpha(x,t;\xi)= |x - \xi|^{-1}, ~~~ 
 		 \mc{L}_{a,b,c} \alpha = - \Delta (|x-\xi|^{-1})= 0, \qquad \text{for } x \neq \xi,
 		\\
 		U(x,t;\xi,\tau) = \frac{H(t-\tau-|x-\xi|)}{|x-\xi|}, \qquad
 		V(x,t;\xi,\tau) = \frac{\delta(t-\tau-|x-\xi|)}{|x-\xi|}.
 		\end{gather*}
 	    Hence $u\smeq 0$ and $v \smeq 0$ in a neighborhood of $(x \smeq \xi, t \smeq \tau)$ and the 
 		singular terms $|x-\xi|^{-1}$ in (\ref{ch BVP for Heaviside 1}),
 		(\ref{ch BVP for Heaviside 2}),  (\ref{ch BVP for delta 1}), 
 		(\ref{ch BVP for delta 2}) will never be an issue.
 		\item Suppose $a,b,c$ are supported in $\Bbar \times [0,T]$.
 		We claim that for $\tau> T+1-|\xi|$ the values of $u,v$ and their derivatives are zero on $t=T$. This is so because, for $\tau> T+1-|\xi|$, we have
 		\[
       	U(x,t;\xi, \tau) = \frac{H(t-\tau - |x-\xi|)}{|x-\xi|}, \qquad
 	    V(x,t;\xi, \tau) = \frac{\delta(t-\tau - |x-\xi|)}{|x-\xi|},
     	\]
        which may be readily verified because $\mc{L}_{a,b,c} = \Box$ on the supports of the right hand sides of the two expressions. 
        \item Suppose $a,b,c$ are supported in $\Bbar \times [0,T]$.
        If $\tau_1 < \tau_2 < - (1+|\xi|)$ then the values of $[u,v](\cdot, \cdot, \xi, \tau_1)$ and $[u,v](\cdot, \cdot, \xi, \tau_2)$ and their derivatives on $t=T$ are the same. This is so because, for $\tau_1 < \tau_2 < - (1+|\xi|)$, we have
 	    \begin{align*}
    	U(x,t;\xi, \tau_1) - U(x,t;\xi, \tau_2) & = \frac{H(t-\tau_1 - |x-\xi|)}{|x-\xi|} - 
    	\frac{H(t-\tau_2 - |x-\xi|)}{|x-\xi|}
    	\\
    	V(x,t;\xi, \tau_1) - V(x,t;\xi, \tau_2) & = \frac{\delta(t-\tau_1 - |x-\xi|)}{|x-\xi|} - 
    	\frac{\delta(t-\tau_2 - |x-\xi|)}{|x-\xi|};
    	\end{align*}
    	this may be readily verified because $\mc{L}_{a,b,c} = \Box$ on the supports of the right hand 
    	sides of the two expressions.
    	\item The previous two observations show that there is no new information about $a,b,c$
    	in the values of $u,v$ and their derivatives on $t=T$ for $\tau$ outside the interval
    	$[-(1+\xi|), T+1 - |\xi|]$.
    	{\item The relations between $a,b,c$ and the traces of $u$ and $v$ on the conical surface $C_{\xi,\tau}$ are what 
	makes the proofs possible. The trace of $u$ on $C_{\xi,\tau}$ depends only on $a,b$ so for the recovery of
	$a,b$ for a given $c$, our data is the trace of $u$ on $t=T$, as in Theorem \ref{thm:abstab}. 
	A tangential derivative of the trace of $v$ on $C_{\xi,\tau}$ equals a known positive multiple of $q$, if $a,b$ are known, so for the 
	recovery of $c$ or $q$, given $a,b$, our data is the trace of $v$ on $t=T$, as in Theorem \ref{thm:qstab}.
	For the recovery of $a,b,c$, our data consists of the traces of both $u$ and $v$ on $t=T$, as in
	Theorems \ref{thm:abcunique}, \ref{thm:abcstab}.}
	{\item From the uniqueness of solutions of initial value problems for hyperbolic PDEs, one can see that 
	$V(\cdot, \cdot, \cdot, \tau) = - U_\tau(\cdot, \cdot, \cdot, \tau)$, so there must be a relation between $u$ and $v$. We have not explored this question - note $u,v$ are defined on regions in $x,t$ space which depend on $\tau$ (and $\xi$). }
 		\end{itemize}

 	Now we describe the main results in our article. 
 	Our first result is about the stability for the problem of recovering $q$ from the data generated by a point source at a fixed location in space but activated at different times $\tau$.
 		\begin{theorem} [Stability for $q$]\label{thm:qstab}
 		Let $a, b$ be a smooth function and a smooth vector field on $\mb{R}^3 \times \real$ with support in $\Bbar \times [0,T]$ and 
 		$\xi \in \mb{R}^3 \setminus \overline{B}$. Given $M>0$, for all smooth functions 
 		$q, \qacu$ on $\real^3 \times \real$ with support in $\Bbar \times [0,T]$ and
 		$\| [q, \acute q, a, b] \|_{C^{21}(\overline{B}\times [0,T])}\le M$, we have
 		\begin{align*}
 		    \|q - \acute q\|_{0, \real^3 \times [0,T]} 
 		    \, \cleq 
 		    \int_{ -1 - |\xi|}^{T+1-|\xi| } \left( \| (v- \acute v) (\cdot, T; \xi, \tau) \|_{1,H_{\xi, \tau}} + \|(v_t - \acute v_t) (\cdot, T; \xi, \tau)\|_{0,H_{\xi, \tau}} \right) \d \tau.
 		\end{align*}
 		Here the constant is independent of $q, \qacu$, and
 		$v$, $\acute v$ are the functions associated to $(a,b,q)$ and $(a,b,\qacu)$ guaranteed by 
 		Proposition \ref{Delta}.
 		\end{theorem}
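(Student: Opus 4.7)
The plan is to apply the Bukhge\u{\i}m--Klibanov strategy inside the characteristic cone $Q_{\xi,\tau}$ via a Carleman estimate for $\mc{L}_{a,b,q}$, exploiting the fact that $\alpha$ depends only on $(a,b)$ so that $f := q - \qacu$ appears directly in the characteristic boundary data on $C_{\xi,\tau}$.

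Setting $w = v - \vacu$ and subtracting the characteristic BVPs \eqref{ch BVP for delta 1}--\eqref{ch BVP for delta 2} satisfied by $v$ and $\vacu$, I would first derive
\begin{align*}
\mc{L}_{a,b,q} w = -f\,\vacu \quad \text{in } Q_{\xi,\tau}, \qquad \Tcal w = -\tfrac{1}{2} f\, \alpha \quad \text{on } C_{\xi,\tau},
\end{align*}
with prescribed final data $(w, w_t)(\cdot, T;\xi,\tau)$ on $H_{\xi,\tau}$. The second identity is the crux of the argument: since $\alpha$ is bounded above and below away from the tip on $C_{\xi,\tau}$, $|\Tcal w|$ is pointwise comparable to $|f|$ on the conical surface.

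The core technical step is a Carleman estimate for $\mc{L}_{a,b,q}$ in $Q_{\xi,\tau}$ with weight $e^{\sigma t}$, adapted to the conical region in the spirit of the flat-strip estimate of \cite{KRS1}. Because $C_{\xi,\tau}$ is a null surface for $\mc{L}_{a,b,q}$, the boundary integrand on $C_{\xi,\tau}$ in the multiplier identity degenerates to a nonnegative multiple of $|\Tcal w|^2$, which I would retain on the left:
\begin{align*}
\sigma\|\Tcal w\|_{0,C_{\xi,\tau},\sigma}^2 + \sigma^3\|w\|_{0,Q_{\xi,\tau},\sigma}^2 + \sigma\|\nabla w\|_{0,Q_{\xi,\tau},\sigma}^2 \le C\,\|\mc{L}_{a,b,q} w\|_{0,Q_{\xi,\tau},\sigma}^2 + C\bigl(\|w\|_{1,H_{\xi,\tau},\sigma}^2 + \|w_t\|_{0,H_{\xi,\tau},\sigma}^2\bigr).
\end{align*}
Substituting $\mc{L}_{a,b,q} w = -f\vacu$ (with $\vacu$ bounded in $L^\infty$ by Proposition \ref{Delta}) converts the first right-hand term into $C\|f\|_{0,Q_{\xi,\tau},\sigma}^2$, and replacing $\Tcal w$ on the left by $-\tfrac12 f\alpha$ converts the first left-hand term into $c\sigma\|f\|_{0,C_{\xi,\tau},\sigma}^2$.

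Integrating in $\tau$ over $[-1-|\xi|,\, T+1-|\xi|]$, the change of variables $\tau = t - |x-\xi|$ (for fixed $x$) has unit Jacobian, and since $f$ is supported in $\Bbar \times [0,T]$ the quantity $\int \|f\|_{0,C_{\xi,\tau},\sigma}^2\, d\tau$ is comparable to $\|f\|_{0,\Bbar\times[0,T],\sigma}^2$, while $\int \|f\|_{0,Q_{\xi,\tau},\sigma}^2\, d\tau \le C(T)\|f\|_{0,\Bbar\times[0,T],\sigma}^2$. Choosing $\sigma$ sufficiently large absorbs the $\|f\|^2$ term on the right, producing an $L^2$-in-$\tau$ version of the stability inequality. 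The $L^1$-in-$\tau$ form in the statement would then follow from a Minkowski-type argument applied to the pointwise representation $f(x,t) = -2\alpha^{-1}\Tcal w(x,t;\xi,\, t-|x-\xi|)$ coming from the characteristic BVP, combined with the change of variables above.

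The hardest part is establishing the Carleman estimate on $Q_{\xi,\tau}$ with the correct sign of the boundary term on the null surface $C_{\xi,\tau}$; this requires a delicate multiplier computation at the characteristic boundary and is the main technical obstacle. Once that is in place, the remaining absorption, change of variables in $\tau$, and Minkowski step are standard.
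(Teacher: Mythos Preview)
Your proposal is correct and follows essentially the same route as the paper: subtract the characteristic BVPs for $v,\vacu$, apply the Carleman estimate of Proposition~\ref{prop:carleman} to $w=v-\vacu$ on $Q_{\xi,\tau}$, use the relation $\Tcal w=-\tfrac12(q-\qacu)\alpha$ on $C_{\xi,\tau}$ to convert the cone boundary term into a weighted $L^2$ norm of $q-\qacu$, integrate in $\tau$ via the layer--cake identity $\int_\tau\int_{C_{\xi,\tau}}e^{2\sigma t}|f|^2\,dS\,d\tau=\sqrt{2}\,\|f\|_{0,\sigma,\Bbar\times[0,T]}^2$, and absorb for large $\sigma$. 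One small correction: the cone boundary term in the Carleman estimate does not degenerate to $|\Tcal w|^2$ alone but rather controls the full tangential gradient $|\nabla_C w|^2+\sigma^2 w^2$ (see Proposition~\ref{prop:carleman}); the paper then extracts $\|r(\partial_t+\theta\cdot\nabla-(a+\theta\cdot b)+r^{-1})w\|_{0,\sigma,C}$ from this, the factor $r$ absorbing the $r^{-1}$ in $\Tcal$, and uses that $r\alpha$ is bounded below.
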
 

 		The rest of our results pertain to the recovery of the vector field
 		$b$ and perhaps the functions $a,c$. For such results, we 
 		need sources at $4$ locations diverse enough to generate data to 
 		separate $a,b$.
 		\begin{defn}
 		Suppose $d$ is a positive integer and $D$ is a non-empty bounded open subset of $\real^d$. A set of locations 
 		$\xi_1, \cdots, \xi_{d+1}$ in $\real^d \setminus \Dbar$ is said to be {\bf diverse} with respect to $D$ if
 		\begin{equation}
 		\| [a,b]\| \leq C \| [ a+ \theta_1(x) \cdot b, \cdots, a + \theta_{d+1}(x) \cdot b]\|,
 		\qquad \forall x \in \Dbar, ~ \forall a\in \real, ~ \forall b \in \real^d,
 		\end{equation}
 		for some constant $C$ independent of $a,b,x$. Here $\| \cdot \|$ is the $l^2$ vector
 		norm in $\real^{d+1}$ and
 		\[
 		\theta_i(x) = \frac{x-\xi_i}{|x- \xi_i|}, \qquad x \in  \Dbar.
 		\]
 		\end{defn}
 		We do not have a characterization of all possible sets of locations diverse with respect to $D$ 
 		but Proposition \ref{prop:diverse} gives two ways to construct many such sets. 
 		A consequence of Proposition \ref{prop:diverse} (see the remark after Proposition \ref{prop:diverse}) is 
 		that if $\rho>0$ then
 		$Ne_1, Ne_2, Ne_3, N(e_1+e_2 + e_3)/3$ is a diverse set of locations with respect to $\rho B$ 
 		if $N > \rho \sqrt{3}$. Here $e_1, e_2, e_3$ are the standard basis vectors in $\real^3$.
 		
 		Our next result addresses the recovery of $a,b$ when $q$ is known.
 		\begin{theorem}[Stability for $a,b$]\label{thm:abstab}
 		Suppose $q$ is a compactly supported smooth function in $\mb{R}^3 \times \real$ with support in $\Bbar \times [0,T]$, and $\xi_1, \cdots, \xi_4$ is a diverse set of locations with respect to $B$.
 		Given $M>0$, if $a, \aacu, b, \bacu$ are smooth functions and vector fields on $\real^3 \times \real$ with support in $\Bbar \times [0,T]$ and
 		 $\|[a, b, \acute a, \acute b, q]\|_{C^{19}(\overline{B}\times [0,T])}\le M$, we have
 		\begin{align*}
 		 \|[a - \acute a, b - \acute b]\|_{0, \real^3 \times [0,T] }  
 		~ \cleq ~
 		 \sum_{i = 1}^{4} \int_{ -1 - |\xi_i|}^{T + 1 - |\xi_i| }  & \left(\| (u - \acute u) (\cdot, T; \xi_i, \tau) \|_{1,H_{\xi_i, \tau}} \right. \\
 		 & \hspace{2cm} \left. + \|(u_t - \acute u_t) (\cdot, T; \xi_i, \tau)\|_{0,H_{\xi_i, \tau}} \right) \d \tau.
 		\end{align*}
 		Here the constant is independent of $a,b,\aacu, \bacu$, and $u$, $\acute u$ are the functions associated to $(a,b,q)$ and $(\aacu, \bacu, q)$ guaranteed by Proposition \ref{Heaviside}.
 		\end{theorem}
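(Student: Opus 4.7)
The plan is to follow the Bukhge\u{\i}m--Klibanov--Carleman strategy developed in \cite{KRS1}, adapted here to a characteristic initial value problem on the forward light cone $Q_{\xi,\tau}$: set up the difference problem, read off the unknown combination $\tilde a + \theta\cdot \tilde b$ from the characteristic boundary data via a transport identity, pass to the measured data on $H_{\xi,\tau}$ via a Carleman estimate, and assemble over the four diverse source locations to recover $[\tilde a,\tilde b]$.

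\emph{Step 1 (difference problem).} Set $\tilde a=a-\acute a$, $\tilde b=b-\acute b$, $\tilde u=u-\acute u$. Subtracting the characteristic BVPs in Proposition \ref{Heaviside} for $(a,b,q)$ and $(\acute a,\acute b,q)$, since $q$ is common,
\begin{align*}
\Lcal_{a,b,q}\tilde u = 2\tilde a\,\acute u_t - 2\tilde b\cdot\nabla\bigl(\acute u + |x-\xi|^{-1}\bigr)\quad \text{on }Q_{\xi,\tau}, \qquad \tilde u\big|_{C_{\xi,\tau}} = \alpha - \acute\alpha,
\end{align*}
and Proposition \ref{Heaviside} gives $\|\acute u\|_{C^3(Q_{\xi,\tau})} \leq C(M,T,\mathrm{dist}(\xi,\Bbar))$, so the source is $L^2$-bounded linearly in $[\tilde a,\tilde b]$. \emph{Step 2 (characteristic trace encodes the unknowns).} From $\Tcal\alpha = 0$ and $\acute\Tcal\acute\alpha = 0$ (the transport equations for $\alpha,\acute\alpha$) together with $\Tcal - \acute\Tcal = -(\tilde a + \theta\cdot\tilde b)$,
\begin{align*}
\Tcal\tilde u\big|_{C_{\xi,\tau}} = -\Tcal\acute\alpha = (\tilde a + \theta\cdot \tilde b)\,\acute\alpha,
\end{align*}
and since $\acute\alpha \gtrsim r^{-1}$ on $Q_{\xi,\tau}$, $|\tilde a + \theta\cdot\tilde b| \lesssim r\,|\Tcal\tilde u|$ on $C_{\xi,\tau}$. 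Note that $\Tcal$ is tangential to the null cone $C_{\xi,\tau}$, so $\Tcal\tilde u$ is determined by $\tilde u|_{C_{\xi,\tau}}$ alone.

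\emph{Step 3 (Carleman estimate).} Apply to $w=\tilde u$ a Carleman estimate on $Q_{\xi,\tau}$ of the type proven in \cite{KRS1}, with weight $e^{\sigma t}$, schematically
\begin{align*}
\sigma\int_{C_{\xi,\tau}} e^{2\sigma t}|\Tcal w|^2\,\d S \lesssim \int_{Q_{\xi,\tau}} e^{2\sigma t}|\Lcal_{a,b,q}w|^2 + \|w\|_{1,H_{\xi,\tau},\sigma}^2 + \|w_t\|_{0,H_{\xi,\tau},\sigma}^2.
\end{align*}
Combining with Steps 1--2 bounds $\int_{C_{\xi,\tau}} e^{2\sigma t}|\tilde a+\theta\cdot\tilde b|^2\,\d S$ in terms of $\|[\tilde a,\tilde b]\|_{0,Q_{\xi,\tau},\sigma}^2$ and the measured weighted norms on $H_{\xi,\tau}$. \emph{Step 4 (diversity, absorption).} Integrate in $\tau\in[-1-|\xi|, T+1-|\xi|]$ and apply the change of variables $(y,\tau) \mapsto (y, s = \tau + |y-\xi|)$, a diffeomorphism from the $\tau$-parameterized cone boundaries onto $\Bbar\times[0,T]$ with Jacobian bounded above and below since $\xi\notin\Bbar$. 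Summing over the four diverse locations $\xi_1,\ldots,\xi_4$ and using $|\tilde a|^2 + |\tilde b|^2 \lesssim \sum_i |\tilde a + \theta_i\cdot\tilde b|^2$ gives
\begin{align*}
\sigma\|[\tilde a,\tilde b]\|_{0,\Bbar\times[0,T],\sigma}^2 \lesssim \|[\tilde a,\tilde b]\|_{0,\Bbar\times[0,T],\sigma}^2 + \sum_{i=1}^4\int\bigl(\|\tilde u\|_{1,H_{\xi_i,\tau},\sigma}^2 + \|\tilde u_t\|_{0,H_{\xi_i,\tau},\sigma}^2\bigr)\,\d\tau.
\end{align*}
Taking $\sigma$ large absorbs the first right-hand term into the left; removing the now-bounded weights and applying Cauchy--Schwarz in $\tau$ yields the claim.

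The main obstacle is the Carleman estimate of Step 3: one needs a weight on the cone $Q_{\xi,\tau}$ delivering a $\sigma$ gain on the null surface $C_{\xi,\tau}$ so that the $[\tilde a,\tilde b]$-source in the PDE can be absorbed in Step 4, and with favorable signs on both $C_{\xi,\tau}$ and $H_{\xi,\tau}$. The null nature of $C_{\xi,\tau}$ for $\Lcal_{a,b,q}$ is essential: it forces boundary contributions there to involve only tangential derivatives ($\Tcal w$), which we do control through Step 2. A secondary technical point is verifying the global $(y,\tau)\leftrightarrow(y,s)$ change of variables and the uniform-in-$\xi_i$ Jacobian bounds needed to convert cone-boundary integrals into the $L^2$ norm on $\Bbar\times[0,T]$.
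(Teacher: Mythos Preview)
Your proposal is correct and follows essentially the same route as the paper's own proof: difference problem, the transport identity on $C_{\xi,\tau}$ yielding $(\overline a+\theta\cdot\overline b)\acute\alpha$, the Carleman estimate (this is Proposition~\ref{prop:carleman} in the paper), integration in $\tau$, diversity, and absorption for large $\sigma$. The only cosmetic differences are that the paper's Carleman estimate carries an extra factor $\sigma$ on the $H_{\xi,\tau}$ boundary terms (harmless once $\sigma$ is fixed), and the $\tau$-integration is executed via the $\delta$-function identity $\int_\real \int_{C_{\xi,\tau}} f\,\d S\,\d\tau=\sqrt{2}\int_{\real^3\times\real} f\,\d x\,\d t$ rather than the explicit change of variables you sketch.
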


 		Our next result addresses the uniqueness in the recovery of $(a, b, c)$. As shown earlier, 
 		one expects to recover only $\text{curl}(a,b)$ and $c$. Unfortunately, to obtain this result we need to restrict $a,b$ to those for which $a + \theta_4 \cdot b$ and $a_t + \theta_4 \cdot b_t$ satisfy a certain integral relation; here
 		\[
 		\theta_4(x) = \frac{x-\xi_4}{|x-\xi_4|}, \qquad x \in \Bbar.
 		\]
 		This relation and the proof of the uniqueness result were inspired by a relation and an argument in \cite{MPS2020}, where a similar uniqueness question was studied though in the time-independent setting.   
 		%
 		
 		
 		There is a gauge invariance associated with the problem of recovering $a,b,c$.
 		If $\phi(x,t)$ and $f(x,t)$ are smooth functions on $\mb{R}^3 \times \mb{R}$, we have
 		\begin{align*}
 		    \left(\dd_t - a - \phi_t\right)(e^{\phi} f) = e^{\phi} \left(\dd_t - a\right)f, \qd \left(\nabla - b - \nabla \phi\right)(e^{\phi} f) = e^{\phi} \left(\nabla - b \right)f,
 		\end{align*}
 		resulting in 
 		\begin{align*}
 		    \mc{L}_{a + \phi_t, b + \nabla \phi, c} (e^\phi f) = e^\phi \mc{L}_{a, b, c}f.
 		\end{align*}
 		Hence, if $\phi(\xi, t) = 0$ for $t \in \mb{R}$, we have
 		\begin{align*}
 		    \mc{L}_{a + \phi_t, b + \nabla \phi, c} (e^\phi U) = e^\phi \mc{L}_{a, b, c}(U) = 4 \pi e^\phi H(t - \tau) \delta(x - \xi) = 4 \pi H(t - \tau) \delta(x - \xi)
 		\end{align*}
 		and
 		\begin{align*}
 		    \mc{L}_{a + \phi_t, b + \nabla \phi, c} (e^\phi V) = 4 \pi \delta(t - \tau, x - \xi).
 		\end{align*}
 		As a consequence, $\Fcal(a, b, c) = \Fcal(a + \phi_t, b + \nabla \phi, c)$ for any 
 		smooth function $\phi(x,t)$ with support in $\Bbar \times [0,T]$ and $\phi(\cdot,T)=0, \phi_t(\cdot,T)=0$. This suggests we can hope to recover at most the curl of $[a,b]$,
 		that is $d ( a dt + b^1 dx^1 + b^2 dx^2 + b^3 dx^3)$.
 		
 		\begin{theorem}[Uniqueness for $\tn{curl} (a,b)$ and $c$]\label{thm:abcunique}
 		Suppose $a,c, \aacu, \cacu$ and
 		$b,\bacu$ are smooth functions and vector fields
 		on $\real^3 \times \real $ with support in $\Bbar \times [0,T]$.
 		Let $\xi_1, \cdots, \xi_4$ be a diverse set of locations with respect to $(T+1)B$
 		and $u, \acute u$ and 
 		$v, \acute v$ the functions associated with $(a,b,c)$ and $(\aacu, \bacu, \cacu)$, respectively, guaranteed by
 		Propositions \ref{Heaviside} and \ref{Delta}. If 
 		\begin{align*}
 		    & [u - \uacu, (u - \uacu)_t](x, T, \xi_i, \tau) = 0, \qd \forall x\in H_{\xi_i, \tau},\ \tau\in [-1 - |\xi_i|, T + 1 - |\xi_i|], \ i\in\{1,2,3,4\},\\
 		    & [v - \vacu, (v - \vacu)_t](x, T, \xi_4, \tau) = 0,
 		    \qd \forall  x\in H_{\xi_4, \tau},\ \tau\in [- 1 - |\xi_4|, T + 1 - |\xi_4|],
 		\end{align*}
 		and 
 		\begin{align*}
 		   & \int_{0}^{|x-\xi_4|} \left( (a-\aacu) + \theta_4 \cdot (b-\bacu) \right) (x - s \theta_4, T - s)\  \d s = 0, \qd \forall x\in \mb{R}^3,\\
 		   & \int_{0}^{|x-\xi_4|} \left( (a-\aacu)_t + \theta_4 \cdot (b - \bacu)_t \right) 
 		   (x - s \theta_4, T - s)\  \d s = 0, \qd \forall x\in \mb{R}^3,
 		\end{align*}
 		then
 		\begin{align*}
 		   \d \left( a\d t+ \sum_{i=1}^{3}b^i\d x^i\right) = \d \left(\acute a\d t+ \sum_{i=1}^{3}\acute b^i\d x^i\right), \qd c = \acute c. 
 		\end{align*}
 		\end{theorem}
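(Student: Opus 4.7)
The approach would be to use the gauge freedom to reduce the theorem to a situation controllable by the Carleman estimates behind Theorems~\ref{thm:abstab} and~\ref{thm:qstab}. The two integral hypotheses at $\xi_4$ turn out to be exactly the consistency conditions for an explicit gauge function $\phi$; once $\phi$ is constructed, gauging $(\aacu,\bacu,\cacu)$ by $\phi$ aligns the transformed coefficients with $(a,b,c)$ along all backward rays from $\xi_4$, and the remaining transverse freedom is then forced to zero by the diverse source data at $\xi_1,\dots,\xi_4$.

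I would begin by setting
\[
\phi(x,t) \;:=\; -\int_{0}^{|x-\xi_4|}\bigl((a-\aacu) + \theta_4(x)\cdot(b-\bacu)\bigr)\bigl(x - s\,\theta_4(x),\ t-s\bigr)\,\d s.
\]
The first integral hypothesis gives $\phi(\cdot,T)\equiv 0$, the second gives $\phi_t(\cdot,T)\equiv 0$, and $\phi(\xi_4,\cdot)\equiv 0$ is immediate from the definition. Differentiation along $\xi_4$-rays yields the transport identity $\phi_t + \theta_4\cdot\nabla\phi = -\bigl((a-\aacu)+\theta_4\cdot(b-\bacu)\bigr)$ globally, so the gauge-equivalent coefficients $\tilde a := \aacu + \phi_t$, $\tilde b := \bacu + \nabla\phi$, $\tilde c := \cacu$ satisfy $\tilde a + \theta_4\cdot\tilde b \equiv a + \theta_4\cdot b$, and consequently $\tilde\alpha(\cdot,\cdot;\xi_4)\equiv \alpha(\cdot,\cdot;\xi_4)$. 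Extending $\phi$ by zero across $\{t=T\}$ (legitimate because both $\phi$ and $\phi_t$ vanish there) keeps $\tilde a,\tilde b$ compactly supported.

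Because $\phi(\xi_4,\cdot)=0$, the intertwining identity $\mc{L}_{\tilde a,\tilde b,\tilde c}(e^{\phi}\,\cdot)=e^{\phi}\mc{L}_{\aacu,\bacu,\cacu}(\cdot)$ produces $\tilde U := e^{\phi}\uacu$ and $\tilde V := e^{\phi}\vacu$ as the $(\tilde a,\tilde b,\tilde c)$-solutions of \eqref{IVP for Heaviside 1}--\eqref{IVP for delta 2} with source at $\xi_4$; the vanishing of $\phi,\phi_t$ on $\{t=T\}$ then upgrades the data hypothesis to $[U-\tilde U,(U-\tilde U)_t]=0=[V-\tilde V,(V-\tilde V)_t]$ on $H_{\xi_4,\tau}$, and matching of leading singular terms on $C_{\xi_4,\tau}$ gives $U-\tilde U\equiv 0$ on the cone as well. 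For $i\in\{1,2,3\}$ I would not gauge, since $\phi(\xi_i,\cdot)$ is generally nonzero, and would work with the un-gauged differences $W_i := U(\cdot,\cdot;\xi_i,\tau)-\uacu(\cdot,\cdot;\xi_i,\tau)$, whose Cauchy traces on $H_{\xi_i,\tau}$ vanish by hypothesis. Writing $A := a-\tilde a$, $B := b-\tilde b$, $C := c-\cacu$, the constraint $A+\theta_4\cdot B\equiv 0$ coming out of Step~1 kills one scalar degree of freedom of $(A,B)\in\mathbb{R}^4$, and feeding the four sets of Cauchy data into the Carleman estimate that drives Theorem~\ref{thm:abstab}---with the diverseness of $\{\xi_1,\dots,\xi_4\}$ inverting the pointwise map $(a,b)\mapsto(a+\theta_i\cdot b)_{i=1}^{4}$---forces $A=B=0$. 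The $V-\tilde V$ comparison at $\xi_4$, combined with the characteristic identity between $v|_{C_{\xi_4,\tau}}$ and $q$ exploited in Theorem~\ref{thm:qstab}, then yields $\tilde q = q$, hence $C = 0$; unwinding the gauge delivers $a-\aacu = \phi_t$, $b-\bacu=\nabla\phi$ and $c=\cacu$, equivalent to the stated identities for $\d\bigl(a\,\d t+\sum_i b^i\,\d x^i\bigr)$ and $c$.

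The main obstacle is the hybrid bookkeeping: the gauge $\phi$ simultaneously kills only the $\xi_4$ source, so the Carleman argument must combine three un-gauged source comparisons---whose right-hand sides involve the unreduced differences $(a-\aacu,\,b-\bacu,\,q-\qacu)$---with a single gauged $\xi_4$ comparison whose right-hand side involves the reduced differences $(A,B,C)$. Verifying that the coercive quadratic-form identity underlying Theorem~\ref{thm:abstab} survives this mixed substitution, and that the additional potential error $q-\tilde q$ introduced by gauging (which contains $\Box\phi$ and nonlinear lower-order terms in $\phi$) is absorbed by the $V$-data input at $\xi_4$, is where the bulk of the careful work will lie.
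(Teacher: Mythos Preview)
Your overall strategy---gauge so that the $\xi_4$-ray combination of the differences vanishes, then run the Carleman machinery behind Theorems~\ref{thm:abstab} and~\ref{thm:qstab}---is exactly the paper's approach. The error is your belief that ``$\phi(\xi_i,\cdot)$ is generally nonzero'' for $i\ne 4$, which pushes you into the hybrid bookkeeping you flag as the main obstacle. In fact $\phi(\xi_i,t)=0$ for every $i$ and every $t\le T$: the integrand in your definition of $\phi$ is supported in $\Bbar\times[0,T]$, and for $|x|>T+1$, $t\le T$, the constraint $0\le s\le t\le T$ together with $|x-s\theta_4|\ge|x|-s>1$ kills the integrand along the entire ray. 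This is precisely why the hypothesis asks for diverseness with respect to $(T+1)B$ rather than just $B$. Once you notice this you can gauge uniformly at all four sources: since the integral hypotheses give $\phi(\cdot,T)=\phi_t(\cdot,T)=0$, the data $[u-\uacu,(u-\uacu)_t]|_{t=T}=0$ at each $\xi_i$ becomes $[u-e^\phi\uacu,(u-e^\phi\uacu)_t]|_{t=T}=0$ at each $\xi_i$, and $e^\phi\uacu$ is the genuine Proposition~\ref{Heaviside} solution for $(\tilde a,\tilde b,\tilde c)$ at every $\xi_i$, not just at $\xi_4$. (The paper gauges both triples separately, with two functions $\phi,\phiacu$, so that both modified first-order parts satisfy $a+\theta_4\cdot b=0$; your single-sided gauge achieving $\tilde a+\theta_4\cdot\tilde b=a+\theta_4\cdot b$ is an equivalent normalization.)

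The hybrid scheme as you describe it does not close, and not merely for bookkeeping reasons. The three un-gauged comparisons at $\xi_1,\xi_2,\xi_3$ give Carleman control of $\abar+\theta_i\cdot\bbar$, whereas your target is the gauged pair $(A,B)=(\abar-\phi_t,\bbar-\nabla\phi)$; these differ by $\phi_t+\theta_i\cdot\nabla\phi$, which is not small. Diverseness inverts the map $(a,b)\mapsto(a+\theta_i\cdot b)_{i=1}^4$ only when all four linear combinations are applied to the \emph{same} pair, so three conditions on $(\abar,\bbar)$ together with the identity $A+\theta_4\cdot B=0$ do not force $A=B=0$; and you certainly cannot conclude $\abar=\bbar=0$, which would contradict gauge invariance. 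With uniform gauging at all four $\xi_i$ this mismatch disappears: the four Carleman estimates yield $\sigma\|[A,B]\|_0^2\cleq\|[A,B,q-\tilde q]\|_0^2$, the $v$-data at $\xi_4$ combined with $\alpha_4=\tilde\alpha_4$ yield $\sigma\|q-\tilde q\|_0^2\cleq\|q-\tilde q\|_0^2+\sigma\|[A,B]\|_0^2$, and taking $\sigma$ large finishes.
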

 		Note that we use data from the $u, \uacu$ solutions for all four source locations $\xi_1, \cdots, \xi_4$
 		but we use data from the $v, \vacu$ solutions only for the source at $\xi_4$.

 		We also have a Lipschitz stability result for the recovery of  $\tn{curl} (a,b)$ and $c$. However, we require more data than was needed for the uniqueness result in Theorem \ref{thm:abcunique}. Let $\psi$ be the solution of the IVP
 		\begin{align}
 		   \square \psi = c - a_t + \nabla \cdot b, & \qd \tn{ in } \real^3 \times (-\iy, T];
 		   \label{extra data}
 		   \\
 		   \psi(\cdot,t)=0, & \qd t <0. \label{eq:psiic}
 		\end{align}
 		For the stability result, in addition to $\Fcal(a,b,c)$, we need the traces of
 		$\psi, \psi_t, \psi_{tt}$ on $t=T$; this replaces the integral condition used in Theorem
 		\ref{thm:abcunique}. We do not know whether there is stability without this extra data.
 		\begin{theorem} [Stability for $\tn{curl} (a,b)$ and $c$]\label{thm:abcstab}
 		Suppose $\xi_1, \cdots, \xi_4$ is a diverse set of locations with respect to $(T+1)B$.
 		Given $M>0$, if $a,c, \acute a,\acute c$ and $b, \bacu$ are smooth functions and vector fields
 		 on $\mb{R}^3 \times \real$ with support in $\overline{B} \times [0,T]$
 		 and $\| [a, b, c, \acute a, \acute b, \acute c] \|_{C^{22}(\overline{B}\times [0,T])}\le M$, then
 		\begin{align*}
 		    \|[\d \eta - \d \acute \eta, & c - \acute c]\|_{L^2(\real^3 \times [0,T]) } 
 		    \\
 		    & \cleq \sum_{i = 1}^{4} \int_{ -1 - |\xi_i|}^{T+1-|\xi_i| } \left( \| (u - \acute u) (\cdot, T; \xi_i, \tau) \|_{2,H_{\xi_i, \tau}} + \|(u_t - \acute u_t) (\cdot, T; \xi_i, \tau)\|_{1,H_{\xi_i, \tau}} \right) \d \tau \\ 
 		    & + \sum_{i = 1}^{4} \int_{ -1 - |\xi_i|}^{T+1-|\xi_i| } \|\left( u_{tt} - \acute u_{tt} \right) (\cdot, T; \xi_i, \tau )\|_{0, H_{\xi_i,\tau}}\ \d \tau \\
 		    & + \sum_{i = 1}^{4} \int_{ -1 - |\xi_i|}^{T+1-|\xi_i| } \left( \| (v- \acute v) (\cdot, T; \xi_i, \tau) \|_{1,H_{\xi_i, \tau}} + \|(v_t - \acute v_t) (\cdot, T; \xi_i, \tau)\|_{0,H_{\xi_i, \tau}} \right) \d \tau \\
 		    & + \|(\psi - \acute \psi) (\cdot, T)\|_{2, \mb{R}^3} + \|(\psi_t - \acute \psi_t)(\cdot, T)\|_{1, \mb{R}^3} + \|(\psi_{tt} - \acute \psi_{tt})(\cdot, T)\|_{0, \mb{R}^3},
 		\end{align*}
 	where $\eta$ and $\acute \eta$ are the 1-forms
 		\begin{align*}
 		    \eta = a\d t+ \sum_{i=1}^{3}b^i\d x^i, \qd \acute \eta = \acute a\d t+ \sum_{i=1}^{3}\acute b^i\d x^i
 		\end{align*}
 		and the constant is independent of $a,b,c, \aacu, \bacu, \cacu$. Here $\psi, \psi'$ are the solutions of the IVP (\ref{extra data}), (\ref{eq:psiic})
 		and $u, \acute u, v, \acute v$ are the functions corresponding to $(a,b,c)$ and $(\acute a, \acute b, \acute c)$
 		guaranteed by Propositions \ref{Heaviside} and \ref{Delta} . 
 		\end{theorem}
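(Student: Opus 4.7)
The plan is to combine the Carleman/Bukhge\u{\i}m--Klibanov schemes that power Theorems \ref{thm:qstab} and \ref{thm:abstab} with a backward energy estimate that exploits the $\psi$ data, the latter playing the role that the integral condition on $\delta a + \theta_4\cdot\delta b$ plays in the uniqueness statement Theorem \ref{thm:abcunique}. Set $\delta a = a - \aacu$, $\delta b = b - \bacu$, $\delta c = c - \cacu$, $\delta q = q - \qacu$, and, for each source location $\xi_i$, let $w_i = u(\cdot;\xi_i,\tau) - \uacu(\cdot;\xi_i,\tau)$ and $z_i = v(\cdot;\xi_i,\tau) - \vacu(\cdot;\xi_i,\tau)$. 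Subtracting the characteristic BVPs in Propositions \ref{Heaviside}--\ref{Delta} produces, inside each cone $Q_{\xi_i,\tau}$, inhomogeneous wave equations
\[
\Lc_{a,b,c} w_i = F_i^u, \qquad \Lc_{a,b,c} z_i = F_i^v,
\]
whose forcings are linear in $(\delta a,\delta b,\delta q)$ with coefficients built from the known $\uacu,\vacu$, and whose boundary data on $C_{\xi_i,\tau}$ come from the differences of $\alpha, \Tcal\alpha$ with their $(\aacu,\bacu)$ analogues; by \eqref{eq:alphadef}, \eqref{eq:Talpha} these are controlled by null-ray integrals of $\delta a + \theta_i\cdot\delta b$ and its first derivatives along $C_{\xi_i,\tau}$.

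Apply to $w_i$, and to an appropriate $\tau$-derivative of $z_i$, the same cone Carleman estimate with weight $e^{\sigma t}$ that powers Theorems \ref{thm:qstab}--\ref{thm:abstab}, with large parameter $\sigma$. This bounds weighted interior norms of $w_i,z_i$ by their Cauchy data on $H_{\xi_i,\tau}$, the characteristic data on $C_{\xi_i,\tau}$, and the $L^2$ norms of $F_i^u,F_i^v$. Integrate in $\tau$ over $[-1-|\xi_i|,T+1-|\xi_i|]$ -- the observations after Proposition \ref{Delta} show no information is lost outside this range -- and invoke the diverseness of $\xi_1,\ldots,\xi_4$ to invert the linear system $(\delta a + \theta_i(x)\cdot\delta b)_{i=1}^{4}\mapsto(\delta a(x),\delta b(x))$ pointwise in $x$. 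Choose $\sigma$ large enough to absorb the $(\delta a,\delta b,\delta q)$ contributions from the forcings into the left-hand side, producing simultaneous $L^2$ bounds on $\delta a,\delta b,\delta q$ by the $u,v$-data terms in the stated estimate. The one extra derivative carried on the $u$-data compared with Theorem \ref{thm:abstab} supplies the gauge-invariant first-order information needed for the curl below.

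To recover $\delta c$, combine the identity
\[
\delta q = \delta c - \delta a_t + \nabla\cdot\delta b + (a+\aacu)\delta a - (b+\bacu)\cdot\delta b
\]
with a standard backward energy estimate on $\real^3\times[0,T]$ applied to $\square(\psi - \psiacu) = \delta c - \delta a_t + \nabla\cdot\delta b$. The $t=T$ traces of $\psi - \psiacu$ control the right-hand side of this wave equation in $L^2$ by the last line of the theorem, which together with the earlier bounds on $\delta a,\delta b,\delta q$ yields the desired $L^2$ bound on $\delta c$. For $d\eta - d\aacu\eta$, whose components are the gauge-invariant first-order expressions $\dd_t\delta b^i - \dd_{x^i}\delta a$ and $\dd_{x^j}\delta b^i - \dd_{x^i}\delta b^j$, rerun the Carleman scheme after differentiating the $w_i$ equations once more; the higher-regularity $u$-norms $\|\cdot\|_{2,H_{\xi_i,\tau}}$ and $\|u_{tt}-\uacu_{tt}\|_{0,H_{\xi_i,\tau}}$ on the right of the theorem are precisely what is needed to close this step.

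The main obstacle is the coupling of the three unknowns $\delta a,\delta b,\delta q$ in the Carleman step: each forcing $F_i^u, F_i^v$ depends on all three, so the estimates must be obtained simultaneously in a single joint Carleman inequality and the cross terms absorbed through the large-$\sigma$ gain. Secondary technicalities are the careful tracking of constants in terms of the distance from $\xi_i$ to $\overline B$, the treatment of the $|x-\xi_i|^{-1}$ singularity on the right of \eqref{ch BVP for Heaviside 1} (handled by the observation that $\mc{M}$ vanishes near $\{\xi_i\}\times\real$), and justifying the $\tau$-integration and interchange of orders that produces the $\tau$-integral form of the right-hand side.
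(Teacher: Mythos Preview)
There is a genuine gap: you have the role of the $\psi$ data backwards, and because of this your Carleman absorption step cannot close.

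In your scheme you attempt to obtain $L^2$ bounds on $(\delta a,\delta b,\delta q)$ first, by absorbing the forcings $F_i^u,F_i^v$ for large $\sigma$, and only afterwards call on $\psi$ to recover $\delta c$. But without a preliminary gauge reduction the forcing in $\mc{L}\,\ubar$ contains $\qbar\,\uacu$, and
\[
\qbar \;=\; \cbar - \abar_t + \nabla\cdot\bbar + (a+\aacu)\abar - (b+\bacu)\cdot\bbar
\]
already carries a full derivative of $(\abar,\bbar)$. The cone Carleman estimate, after $\tau$--integration and use of diverseness, therefore yields only
\[
\sigma\,\|[\abar,\bbar]\|_0^2 \;\cleq\; \|[\abar,\bbar,\nabla_{x,t}\abar,\nabla_{x,t}\bbar]\|_0^2 + (\text{data}),
\]
which no choice of $\sigma$ absorbs; differentiating once more just shifts the mismatch up by one order. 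This is exactly the obstruction singled out at the beginning of the paper's proof (``the $-\abar_t+\nabla\cdot\bbar$ term prevents us from getting useful estimates''). Your proposed remedy, a ``backward energy estimate'' for $\Box(\psi-\psiacu)=\cbar-\abar_t+\nabla\cdot\bbar$, does not help: a standard energy inequality bounds the solution by the source and the final data, not the source by the final data alone.

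The paper resolves this by using $\psi$ at the \emph{start} as a gauge: replacing $(a,b,c)$ by $(a+\psi_t,\,b+\nabla\psi,\,c)$ forces $c-a_t+\nabla\cdot b=0$, so that $q=a^2-|b|^2$ is zero order in $(a,b)$ and the forcings are genuinely zero order in $(\abar,\bbar)$; the $t=T$ traces of $\psi-\psiacu$ enter only to compare $e^{\psi}u$ with $e^{\psiacu}\uacu$. In this gauge $\cbar=\abar_t-\nabla\cdot\bbar$, so both $\cbar$ and $d\eta-d\acute\eta$ follow from an estimate on $[\abar,\bbar,D\abar,D\bbar]$. A second missing ingredient is how the $V$ solution is used: rather than a $\tau$--derivative of $\vbar$, the paper combines the boundary relation for $\vbar$ on $C_{\xi,\tau}$ with the algebraic identity
\[
\mc{L}\alpha \;=\; \alpha\,(\partial_t-\theta\cdot\nabla)(a+\theta\cdot b)\;-\;\Delta_S\alpha\;+\;2b^\perp\cdot\nabla\alpha\;-\;(|b^\perp|^2+2r^{-1}\theta\cdot b)\alpha
\]
(valid in the $\psi$--gauge) to isolate the \emph{normal} derivative $(\partial_t-\theta\cdot\nabla)(\abar+\theta\cdot\bbar)$, while the tangential derivatives come from applying $\Omega_{lm}$ and $\partial_t+\theta\cdot\nabla$ to the $\ubar$ problem. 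Together these give the full gradient $D(\abar+\theta\cdot\bbar)$ on $C$, and then diverseness plus $\tau$--integration closes the estimate.
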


 		 A fundamental aspect of our work is the Lipschitz stability results for the space and time dependent coefficients obtained by the use of Carleman estimates on domains depending on the parameter $\tau$ and the integral of these estimates with respect to $\tau$. We very much exploit the relation between the unknown coefficients and the traces of the solutions of the IVP on the characteristic cones.
 		
 		 We introduce some notation used in the rest of the article.
 		 For convenience, we  denote the operators 
 		 $\mc{L}_{a,b,c}$ and $\mc{L}_{\acute a, \acute b, \acute c}$ by $\mc{L}$ and $\acute {\mc{L}}$ respectively. We define the differences 
 		\begin{align}\label{differences}
 		    \overline a := a - \acute a, \qd \overline b := b - \acute b, \qd \overline c := c - \acute c, \qd \overline q := q - \acute q, \qd \overline u := u - \acute u, \qd \overline v := v - \acute v. 
 		\end{align}
 	Also, given a $\xi \in \real^3 \setminus \Bbar$ and $x \in \Bbar$, recall that we have defined
 	\[
 	\theta(x) := \frac{x-\xi}{|x-\xi|}, \qquad x \in \real^3, ~ x \neq \xi.
 	\]
 	We use $\theta$ instead of $\theta(x)$ most of the time and we use $\theta_i$ when $\xi$ is replaced by 
 	$\xi_i$.
 	
 	A key ingredient of the proofs of the theorems is a Carleman estimate, with explicit boundary terms, for the operator $\Lcal_{a,b,c}$, in the region $Q_{\xi,\tau}$. We state it here and give the proof in Section \ref{sec:carleman}.
 		\begin{proposition}\label{prop:carleman}
 		Suppose $\tau \in \mb{R}$, $\xi \in \real^3$ and $a,q$, $b$ are smooth functions and 
 		vector fields in $\real^3 \times \real$ with $\{\xi \} \times \real$ disjoint from the supports of $a,b,q$.
 	 Then there is a $\sigma_0 >0$ so that
 		\begin{align}
 		 \nn   \sigma \int_{Q_{\xi, \tau}} e^{ 2\sigma t} & \left( |\nabla_{x,t} w|^2 + \sigma^2 w^2  \right)  + \sigma \int_{C_{ \xi, \tau}} e^{ 2\sigma t} \left( |\nabla_{C} w|^2 + \sigma^2 w^2  \right) \\
 		 \label{carleman}   & \qquad \cleq \left( \int_{Q_{\xi, \tau}} e^{ 2\sigma t} |\mc{L}_{a,b,q} w|^2  + \sigma \int_{H_{\xi,\tau}} e^{ 2\sigma t} \left( |\nabla_{x,t} w|^2 + \sigma^2 w^2  \right) \right),
 		\end{align}
 		for every $w \in C^3(Q_{\xi, \tau})$ and every $\sigma \geq \sigma_0$.
 		Here $\nabla_{C}$ represents the gradient on the submanifold $C_{\xi, \tau}$. Further, 
 		the constant is independent of $w$ and $\sigma$ and depends only on $T,|\xi|, |\tau|$ and 
 		$\|[a,b,q]\|_{C^0(Q_{\xi,\tau})}$.
 		\end{proposition}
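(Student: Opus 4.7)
The plan is to prove a weighted energy inequality for the wave operator on the conical region $Q_{\xi,\tau}$ with weight $e^{2\sigma t}$, by a multiplier method.

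First I would reduce to the pure wave operator. Writing $\mc{L}_{a,b,q} = \Box + \mc{M}$ with $\mc{M} = -2 a \pa_t + 2 b \cdot \nabla + q$ a first-order operator, we have $|\mc{M} w|^2 \le C(|\nabla_{x,t} w|^2 + w^2)$, hence $|\mc{L}_{a,b,q} w|^2 \ge \tfrac{1}{2}|\Box w|^2 - C(|\nabla_{x,t}w|^2 + w^2)$. The deficit is absorbed by the left-hand side term $\sigma\int_{Q_{\xi,\tau}} e^{2\sigma t}(|\nabla_{x,t}w|^2 + \sigma^2 w^2)$ as soon as $\sigma \ge \sigma_0$ is large enough (depending on $\|[a,b,q]\|_{C^0(Q_{\xi,\tau})}$). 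It then suffices to prove the estimate with $\mc{L}_{a,b,q}$ replaced by $\Box$.

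Next I would compute two multiplier identities. Multiplying $\Box w$ by $e^{2\sigma t} w_t$ and integrating by parts in spacetime yields
\[
e^{2\sigma t} w_t \, \Box w = \tfrac{1}{2} \pa_t \bigl[e^{2\sigma t}(w_t^2 + |\nabla w|^2)\bigr] - \sigma e^{2\sigma t}(w_t^2 + |\nabla w|^2) - \operatorname{div}_x \bigl[ e^{2\sigma t} w_t \nabla w \bigr],
\]
which on $Q_{\xi,\tau}$ produces the volume term $\sigma \int_Q e^{2\sigma t}(w_t^2 + |\nabla w|^2)$ plus boundary integrals on $H_{\xi,\tau}$ and $C_{\xi,\tau}$. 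An analogous computation with the multiplier $e^{2\sigma t}\sigma^2 w$ produces $2\sigma^4 \int_Q e^{2\sigma t} w^2 + \sigma^2\int_Q e^{2\sigma t}|\nabla w|^2$ on one side, with a deficit $-\sigma^2 \int_Q e^{2\sigma t} w_t^2$ and further boundary integrals on the other. Scaling the first identity by $\sigma$ and adding the second with coefficient $1/2$ puts on the left
\[
\tfrac{\sigma^2}{2}\int_Q e^{2\sigma t} w_t^2 + \tfrac{3\sigma^2}{2}\int_Q e^{2\sigma t}|\nabla w|^2 + \sigma^4 \int_Q e^{2\sigma t} w^2,
\]
which dominates $\sigma \int_Q e^{2\sigma t}(|\nabla_{x,t} w|^2 + \sigma^2 w^2)$ for $\sigma \ge 1$.

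Then I would identify the boundary contributions. On $H_{\xi,\tau}$ the outward spacetime unit normal is $(\nu_t, \nu_x) = (1, 0)$, and Young's inequality bounds the boundary terms by $C\sigma \int_{H_{\xi,\tau}} e^{2\sigma t}(|\nabla_{x,t} w|^2 + \sigma^2 w^2)$. On the characteristic cone $C_{\xi,\tau}$ the outward unit normal is $(-1, \theta)/\sqrt{2}$ with $\theta = (x-\xi)/|x-\xi|$; the boundary contribution from the $w_t$ multiplier (scaled by $\sigma$) equals
\[
-\tfrac{\sigma}{2\sqrt{2}} \int_{C_{\xi,\tau}} e^{2\sigma t} \bigl[ (w_t + \theta \cdot \nabla w)^2 + |\nabla_\perp w|^2 \bigr]\, d\sigma_C,
\]
where $\nabla_\perp w$ is the component of $\nabla w$ perpendicular to $\theta$. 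Because $(1,\theta)/\sqrt{2}$ and the angular directions span the tangent space of $C_{\xi,\tau}$, the bracket is comparable to $|\nabla_C w|^2$, and moving this term to the left contributes $\sigma\int_C e^{2\sigma t}|\nabla_C w|^2$. The contribution from the $\sigma^2 w$ multiplier on $C_{\xi,\tau}$ similarly contains $\tfrac{\sigma^3}{2\sqrt{2}}\int_C e^{2\sigma t} w^2$ (with positive sign after moving to the left), and the accompanying cross term is controlled by $|\sigma^2 w(w_t + \theta\cdot\nabla w)| \le \tfrac{\sigma^3}{4} w^2 + \sigma(w_t + \theta\cdot\nabla w)^2$, producing the $\sigma^3 \int_C e^{2\sigma t} w^2$ term on the left. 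Finally, the Young inequalities $|\sigma w_t \Box w| \le \tfrac{\sigma^2}{4} w_t^2 + (\Box w)^2$ and $|\tfrac{\sigma^2}{2} w \Box w| \le \tfrac{\sigma^4}{2} w^2 + \tfrac{1}{8}(\Box w)^2$ absorb half of each interior term into the left, leaving $C \int_Q e^{2\sigma t} (\Box w)^2$ on the right.

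The main obstacle is the sign analysis on $C_{\xi,\tau}$: because $C_{\xi,\tau}$ is characteristic for $\Box$, only tangential derivatives $(w_t + \theta\cdot\nabla w)$ and $\nabla_\perp w$ appear in the quadratic form on $C_{\xi,\tau}$, so the conical boundary integrals emerge with the correct signs to move to the left as free positive gains. Coordinating the two multiplier identities so that the interior and conical left-hand terms simultaneously acquire the prescribed $\sigma$-powers, while the $H_{\xi,\tau}$ and $(\Box w)^2$ contributions on the right end up with controllable coefficients, is the only delicate bookkeeping point.
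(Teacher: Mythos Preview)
Your proposal is correct and follows essentially the same strategy as the paper. Both reduce to $\Box$ by absorbing the lower-order terms, and both establish the weighted inequality by a multiplier/energy computation with weight $e^{2\sigma t}$, carefully analyzing the boundary fluxes on $H_{\xi,\tau}$ and $C_{\xi,\tau}$ and exploiting that on the characteristic cone only the tangential combination $(w_t+\theta\cdot\nabla w)^2+|\nabla_\perp w|^2$ appears, with the favorable sign.

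The only difference is packaging: the paper invokes a pre-existing Carleman estimate (Theorem~A.7 of \cite{RS_1}) for the weight $\phi(x,t)=t$, which already produces the interior term together with an explicit boundary flux $\nu\cdot E$, and then computes $\nu\cdot E$ on $C$ and $H$ directly. Your version carries out the underlying multiplier identities by hand, using the pair $e^{2\sigma t}w_t$ and $\sigma^2 e^{2\sigma t}w$ and combining them. This is exactly the computation hidden inside the cited result (the boundary vector $E$ in the paper is precisely the flux of the multiplier $2e^{\sigma t}(e^{\sigma t}w)_t - g\,e^{2\sigma t}w$), so the two arguments are the same in substance. Your bare-hands route is self-contained; the paper's route is shorter on the page but relies on an external reference.
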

 		
The rest of the article gives the proofs of the propositions and the theorems stated above.
 		
 		
\section{Proof of Theorem \ref{thm:qstab}} \label{sec:qstab}

 		We have $a = \acute a,\ b = \acute b$ and a single source
 		$\xi \in \mb{R}^3 \setminus \overline B$. For any $\tau \in [-(1+|\xi|), T+1 - |\xi|]$, 
 		let $v, \vacu$ be
 		the functions guaranteed by Proposition \ref{Delta} for the coefficients $(a, b, q)$ and $(a, b, \acute q)$. Taking the
 		differences of (\ref{ch BVP for delta 1}), (\ref{ch BVP for delta 2}) for the two sets of coefficients, we obtain
 		\begin{align}
 		 \label{diff eqn for q stability 1}  \mc{L} \overline v = - \overline q \acute v, 
 		 & \qd \tn{ in } Q_{\xi,\tau}, \\
 		 \label{diff eqn for q stability 2}   
 		 2 \left(  \dd_t + \theta \cdot \nabla - (a+\theta \cdot b)
 		 + r^{-1} \right) \overline v = - \overline q \alpha,& \qd \tn{ on } C_{\xi,\tau},
 		\end{align}
 		{ where $\overline{q},\overline{v}$ were defined in \eqref{differences}}. Applying Proposition \ref{prop:carleman} to the function $\overline v$ in the region 
 		$Q_{\xi,\tau}$, we have
 		\begin{align}\nn
 		    \sigma \|\overline v\|^2_{1, \sigma, C_{\xi,\tau}} 
 		    \cleq \|\mc{L}\overline v\|^2_{0, \sigma, Q_{\xi,\tau}} + \sigma \|\overline v\|^2_{1, \sigma, H_{\xi, \tau}} + \sigma \|\overline v_t\|^2_{0, \sigma, H_{\xi, \tau}},
 		\end{align}
 		with the constant dependent only on $|\xi|, T$, $\|[a,b,q]\|_{C^0(Q_{\xi,\tau})}$.
 		Hence, using \eqref{diff eqn for q stability 1},
 		\eqref{diff eqn for q stability 2} and that $r\alpha$ is a positive continuous function on $\overline Q_{\xi, \tau}$, we have 
 		\begin{align}\label{pre q stability}
 		\sigma \|\overline q\|^2_{0, \sigma, C_{\xi, \tau}} 
 		& ~ \cleq ~ \sigma \| r \alpha \qbar \|^2_{0, \sigma, C_{\xi, \tau}}
 		~ \cleq ~ \sigma 
 		\|r (\partial_t + \theta \cdot \nabla -(a+ \theta \cdot b) + r^{-1} )\vbar \|^2_{0,\sigma,C_{\xi, \tau}}
 		\nonumber
 		\\
 		& ~\cleq ~
 		\sigma \|\overline v\|^2_{1, \sigma, C_{\xi,\tau}} 
 		~\cleq~
 		\|\overline q\|^2_{0, \sigma, Q_{\xi, \tau}} 
 		+ \sigma \|\overline v\|^2_{1, \sigma, H_{\xi, \tau}} 
 		+ \sigma \|\overline v_t\|^2_{0, \sigma, H_{\xi, \tau}}.
 		\end{align}
Here the constant depends on $|\xi|, T$, $\|[a,b,q]\|_{C^0(Q_{\xi,\tau})}$ and
$\|\vacu\|_{C^0}$. Using Proposition \ref{Delta}, the constant depends only on $T, M$ and $\vert \xi \vert$.

 		Noting that $q, \qacu$ are supported in $\Bbar \times [0,T]$ and $\Bbar \times [0,T]$ does not intersect $C_{\xi,\tau}$ if $\tau$ is outside the interval $[-(1+|\xi|), T+1 - |\xi|]$, we 
 		have
 		\begin{align*} 
 		 \int_{-1 - |\xi| }^{T + 1 - |\xi|}  \int_{C_{\xi, \tau}} e^{2 \sigma t} \,
 		 |\overline q(x,t)|^2 \ & \d S\ \d\tau 
 		  = \int_{\real}  \int_{C_{\xi, \tau}} e^{2 \sigma t} \,
 		 |\overline q(x,t)|^2 \ \d S\ \d\tau 
 		 \\
         & = \sqrt{2} \int_\real \int_{\mb{R}^3 \times \mb{R}} e^{2 \sigma t} \,
         |\overline q(x,t)|^2 \, \delta( t - \tau - |x - \xi|) \ \d x \d t\ \d \tau 
 		 \\
 		 & = \sqrt{2}\int_{\mb{R}^3 \times \mb{R}} e^{2 \sigma t} \, |\overline q(x,t)|^2 \int_{\mb{R}} \delta( t - \tau - |x - \xi| )\ \d \tau  \ \d x \d t 
 		 \\
 		 & = \sqrt{2}\int_{\mb{R}^3 \times \mb{R}} e^{2 \sigma t} \, |\overline q(x,t)|^2\ \d x \d t
 		 \\
 		 & = \sqrt{2}\, \|\overline q\|^2_{0, \sigma, \real^3 \times [0,T]}.
 		\end{align*}
 		Hence integrating \eqref{pre q stability} w.r.t $\tau$ over the interval $\left[-1 - |\xi|, T + 1 - |\xi|\right]$, we obtain
 		\begin{align*}
 		    \sigma \|\overline q\|^2_{0, \sigma, \overline B \times [0,T]} ~ \cleq ~ \|\overline q\|^2_{0, \sigma, \overline B \times [0,T]} + \sigma \int_{- 1 - |\xi|}^{T + 1 - |\xi|} \left(\|\overline v\|^2_{1, \sigma, H_{\xi, \tau}} + \|\overline v_t\|^2_{0, \sigma, H_{\xi, \tau}}\right)\ 
 		    \d \tau,
 		\end{align*}
 		which proves Theorem \ref{thm:qstab} if we take $\sigma $ to be  large enough. 
 		
 		
 \section{Proof of Theorem \ref{thm:abstab}}\label{sec:abstab}
 		Fix a $\xi_i$ and a $\tau \in [-(1+|\xi_i|), T+1 - |\xi_i|]$. Let $u, \uacu$ be the solutions corresponding to the coefficients $(a,b,q)$ and 
 		$(\acute a, \acute b, q)$ guaranteed by Proposition \ref{Heaviside}. Note that $u, \uacu$ are zero
 		in a neighborhood of $(x \smeq \xi_i, t \smeq \tau)$ and $a,b,q, \aacu, \bacu, \qacu$ are supported
 		away from $\{ \xi \} \times \real$.
 		
 		Taking the differences of versions of 
 		\eqref{ch BVP for Heaviside 1}, \eqref{ch BVP for Heaviside 2} 
 		associated to $(a, b, q)$ and $(\aacu, \bacu, q)$, for each $\xi_i$, we have 
 		\begin{align}
 		 \label{eqn 1 for (a,b) stability}   & \mc{L} \overline u = 2 \overline a \acute u_t - 2 \overline b \cdot \nabla \acute u + \f{2 \overline b\cdot (x - \xi_i)}{|x - \xi_i|^3},  \qd \tn{ in } Q_{\xi_i, \tau},  \\
 		& \overline u = \alpha - \acute \alpha, \qd \tn{ on } C_{\xi_i, \tau}.
 		\label{eq:ubardiff}
 		\end{align}
 		
 		Applying Proposition \ref{prop:carleman} to $\ubar$ in the region $Q_{\xi_i, \tau}$, we have
 		\begin{align}
 		\sigma \|\overline u\|^2_{1, \sigma, C_{\xi_i, \tau}} \cleq 
 		 \|\mc{L} \ubar \|^2_{0, \sigma, Q_{\xi_i, \tau}} + \sigma \|\overline u\|^2_{1, \sigma, H_{\xi_i,\tau}} + \sigma \|\overline u_t\|^2_{0, \sigma, H_{\xi_i,\tau}},
 		 \label{eq:ubarcarlabq}
 		\end{align}
 		with the constant dependent only on $\xi_i, T, \|[a,b,q]\|_{C^0}$.
 		
 		Now, using (\ref{eq:Talpha}), we have
 		\begin{align}
 		 ( \dd_t + \theta \cdot \nabla & - (a + \theta_i \cdot b) + r^{-1}) (\alpha -\acute \alpha)
 		 \nn
 		 \\
 		 & = - ( \dd_t + \theta \cdot \nabla  - (a + \theta_i \cdot b) + r^{-1}) \alphaacu
 		 \nn
 		 \\
 		 & = - ( \dd_t + \theta \cdot \nabla  - (\aacu + \theta_i \cdot \bacu) + r^{-1}) \alphaacu
 		 - (\abar + \theta_i \cdot \bbar) \alphaacu
 		 \nn
 		 \\
 		 & = - (\abar + \theta_i \cdot \bbar) \alphaacu.
 		  \label{eqn 2 for (a,b) stability}
 		\end{align}
 		So, using \eqref{eqn 1 for (a,b) stability} - \eqref{eqn 2 for (a,b) stability} and noting that $\bbar$ is supported away 
 		from $\{\xi_i\} \times \real$, we obtain
 		\begin{align}\label{pre-stability of (a,b) 1}
 		   \sigma \|\overline a + \theta_i \cdot \overline b\|^2_{0, \sigma, C_{\xi_i, \tau}} 
 		   \cleq \sigma \|\overline u\|^2_{1, \sigma, C_{\xi_i, \tau}} \cleq \|[\overline a, \overline b]\|^2_{0, \sigma, Q_{\xi_i, \tau}} + \sigma \|\overline u\|^2_{1, \sigma, H_{\xi_i,\tau}} + \sigma \|\overline u_t\|^2_{0, \sigma, H_{\xi_i,\tau}},
 		\end{align}
 		with the constant dependent only on $\xi_i, T, \|[a,b,q]\|_{C^0}$ and $\|\uacu\|_{C^1}$, hence on $\vert \xi_4\vert, T$ and $M$.
 		
 		Imitating the integral relation calculation in the proof of Theorem \ref{thm:qstab}, we have
 		\[
 		\int_{-1-|\xi_i|}^{T+1-|\xi_i|} \int_{C_{\xi_i, \tau}} e^{2 \sigma t} \,
 		|\abar + \theta_i \cdot \bbar|^2 \, dS_{x,t} \, d \tau
 		= \sqrt{2} \int_{\Bbar \times [0,T]} e^{2 \sigma t} \, |\abar + \theta_i \cdot \bbar|^2 \, dx \, dt.
 		\]
 		Hence integrating \eqref{pre-stability of (a,b) 1} w.r.t $\tau$ over 
 		$[-1 - |\xi_i|, T + 1 - |\xi_i|]$
 		\begin{align}\label{pre-stability of (a,b)}
 		    \sigma \|\overline a + \theta_i \cdot \overline b\|^2_{0, \sigma, \overline B \times [0,T]} 
 		    \cleq \|[\overline a, \overline b]\|^2_{0, \sigma, \overline B \times [0, T]} + \sigma \int_{- 1 - |\xi_i|}^{T + 1 - |\xi_i|} \left( \|\overline u\|^2_{1, \sigma, H_{\xi_i,\tau}} + \|\overline u_t\|^2_{0, \sigma, H_{\xi_i,\tau}} \right)\ \d\tau.
 		\end{align}
 		Using \eqref{pre-stability of (a,b)} for each $i=1,2,3,4$ and noting that
 	 $\xi_i$, $i=1, \cdots, 4$, is a diverse set of locations w.r.t $B$, we obtain
 		\begin{align*}
 		    \sigma \|[\overline a, \overline b]\|^2_{0, \sigma, \overline B \times [0, T]} 
 		    \cleq \|[\overline a, \overline b]\|^2_{0, \sigma, \overline B \times [0, T]} + \sigma \sum\limits_{i = 1}^4\int_{ - 1 - |\xi_i|}^{T + 1 - |\xi_i|} \left( \|\overline u\|^2_{1, \sigma, H_{\xi_i,\tau}} + \|\overline u_t\|^2_{0, \sigma, H_{\xi_i,\tau}} \right)\ \d\tau
 		\end{align*}
 		The theorem follows if we choose $\sigma$ large enough.

 		
\section{Proof of Theorem \ref{thm:abcunique}}
 	    We seek to prove the uniqueness in the recovery $\tn{curl}(a,b)$ and $c$ from the values of
 	    $U,U_t, V, V_t$ on $H_{\xi, \tau}$ for $\xi = \xi_i$, $i=1, \cdots, 4$. 
	    
	    {
	    At first glance it would seem that we just need to imitate and combine the arguments used in the proofs of Theorems \ref{thm:qstab} and \ref{thm:abstab}. In fact, imitating the approach in the proof of Theorem \ref{thm:abstab}, one obtains the estimate 
	    \beqn
	     \sigma \| [\abar, \bbar] \|_{0,\sigma} \cleq \|[\abar, \bbar, \qbar]\|_{0, \sigma}.
	     \label{eq:expl16a}
	     \eeqn
	     However, if we imitate the proof of 
	    Theorem \ref{thm:qstab}, there is a difficulty in obtaining the estimate
	    \[
	    \sigma \|\qbar\|_{0,\sigma} \cleq  \|\qbar\|_{0,\sigma} + \sigma \|[\abar, \bbar]\|_{0,\sigma}.
	    \] 
	     Before, we describe the difficulty and how we resolve it, we note that the two estimates mentioned above would 
	     give us stability in the inverse problem of determining $a, b, c$, which of course is not feasible since the problem is gauge 
	     invariant and one can hope to recover only curl$(\abar, \bbar)$ and $\cbar$.
	     
	     Now the difficulty mentioned in the previous paragraph is addressed. To estimate $\qbar$ we would use the PDE for $\vbar$, 	     
	     the difference of \eqref{ch BVP for delta 2} for $a,b,q$ and $\aacu, \bacu, \qacu$, combined with a hoped for estimate
	     of the form
	     \beqn
	     |q - \qacu| \cleq | \mc{L}_{a,b,q} \alpha - \mc{L}_{\aacu, \bacu, \qacu} \acute{\alpha}| + |[\abar, \bbar]|
	     \label{eq:tempaabbqq}
	     \eeqn
	     This was straightforward in the proof of Theorem \ref{thm:qstab} because there $a=\aacu, b = \bacu$ and hence
	     $\alpha = \acute{\alpha}$ (which was bounded away from $0$); therefore the RHS of \eqref{eq:tempaabbqq}
	     was just $|q-\qacu| \alpha$. For Theorem \ref{thm:abcunique}, we do not have $a=\aacu, b = \bacu$ or $\alpha = \alphaacu$. 
	     In this case, instead of \eqref{eq:tempaabbqq} one can show
	     \beqn
	     |q - \qacu| \cleq | \mc{L}_{a,b,q} \alpha - \mc{L}_{\aacu, \bacu, \qacu} \acute{\alpha}| + |[\abar, \bbar, 
	     \nabla_{x,t} (\abar + \theta \cdot \bbar)]|,
	     \label{eq:t2aabb}
	     \eeqn
	    because $\mc{L}_{a,b,q} \alpha$ requires first order derivatives of $a+ \theta \cdot b$. An estimate of the type \eqref{eq:t2aabb} is 
	    not useful since our other estimate \eqref{eq:expl16a}, is for the zeroth order norm of $\abar, \bbar$.
	    
	    The article  \cite{MPS2020} showed the way to resolve this difficulty; replace $\abar, \bbar$ by a
	    gauge equivalent $\abar, \bbar$  for which $\abar+ \theta \cdot \bbar=0$ (for one of the $\theta$) and the issue of first order 
	    derivatives of $\abar + \theta \cdot \bbar$ in \eqref{eq:t2aabb} goes away. Such gauge equivalent $\abar, \bbar$ exist but the gauge 
	    affects the data unless we impose conditions on the gauge - that is the null integral condition the hypothesis of Theorem 
	    \ref{thm:abcunique}.
	    
	    We now proceed with the proof of the theorem.
	    }
 	    
 		The $u, \uacu, v, \vacu$ are the solutions guaranteed by Propositions \ref{Heaviside} and
 	    \ref{Delta} for the coefficients $a,b,c$ and  $\aacu, \bacu, \cacu$.
 	    Since $a,b,c$ are supported in $\Bbar \times [0,T]$ and 
 		$\xi \in \real^3 \setminus (T+1)\Bbar$, one may check that for a fixed $\xi \in (T+1)B$, 
 		the values of $u, u_t, v, v_t$ 
 		and $\uacu, \uacu_t, \vacu, \vacu_t$ on $\real^3 \times \{t \smeq T\} $ 
 		are zero for $\tau>T+1 - |\xi|$
 		and do not change as $\tau$ varies over $\tau \in (-\infty, -1 - |\xi|]$. Hence,
 		from the hypothesis of Theorem \ref{thm:abcunique}, we may assume that
 		\begin{align*}
 		    & [u - \uacu, (u - \uacu)_t](x, T, \xi_i, \tau) = 0, \qd \forall x\in H_{\xi_i, \tau},\ i\in\{1,2,3,4\},\\
 		    & [v - \vacu, (v - \vacu)_t](x, T, \xi_4, \tau) = 0,
 		    \qd \forall  x\in H_{\xi_4, \tau},
 		\end{align*}
 		for all $\tau \in \real$ rather than for a limited range of $\tau$.

 	    As discussed in the introduction, due to gauge invariance,
 	    there is a  natural obstruction to uniqueness when attempting to
 	    recover $a,b,c$. For $x \neq \xi_4$, define
 	    \begin{align*}
 	        \phi(x,t) & := - \int_{0}^{|x - \xi_4|} (a + \theta_4 \cdot b) (x - s\theta_4, t - s) \ \d s,
 	        \\
 	       \acute \phi(x,t) &= - \int_{0}^{|x - \xi_4|} (\acute a + \theta_4 \cdot \acute b) (x - s\theta_4, t - s) \ \d s.
 	    \end{align*}
 	    Since $\xi_4 \notin (T+1)\Bbar$ and $a,b,c, \aacu, \bacu, \cacu$ are supported in
 	    $\Bbar \times [0,T]$, we have $\phi=0$ and $\phiacu=0$
 	    in a punctured cylindrical neighborhood of $\{ \xi_4\} \times \real$. Hence, 
 	    defining $\phi(\xi_4,\cdot):=0$, $\phiacu(\xi_4, \cdot):=0$ gives us smooth functions on
 	    $\real^3 \times \real$ which are zero in a cylindrical neighborhood of $\{0\} \times \real$. We also note that the intersection of the supports of $\phi, \phiacu$ with $\real^3 \times (-\infty,T]$ is contained in $(T+1) \Bbar \times (-\infty, T]$.
 	    
 	    As shown in the introduction, $e^\phi u$, $e^\phiacu \uacu$
 	    $e^\phi v$, $e^\phiacu \vacu$ are the functions guaranteed by
 	    Propositions \ref{Heaviside} and \ref{Delta} for the coefficients
 	    $a+\phi_t, b + \nabla \phi, c$ and $\aacu + \phiacu_t, \bacu + \nabla \phiacu, \cacu$, provided
 	    the hypotheses of these propositions are satisfied. The propositions require that the cylinder
 	    $\{\xi\} \times \real$ not intersect the supports of the coefficients, which seems not to be true
 	    for the modified $a,b,c$. However, we will be using the values of $e^\phi u$, $e^\phiacu \uacu$
 	    $e^\phi v$, $e^\phiacu \vacu$ only on subsets of the region 
 	    $\real^3 \times (-\infty,T]$, so we need Propositions \ref{Heaviside} and \ref{Delta} only for the region $\real^3 \times (-\infty,T]$. Since $|\xi_i|>T+1$, the cylinders 
 	    $\{ \xi_i \} \times (-\infty, T]$, $i=1,2,3,4$, do not intersect the supports of 
 	    $a+\phi_t, b + \nabla \phi, c$ and $\aacu + \phiacu_t, \bacu + \nabla \phiacu, \cacu$, so
 	    Propositions \ref{Heaviside} and \ref{Delta} are valid on $\real^3 \times (-\infty, T]$
 	    for the coefficients $a+\phi_t, b + \nabla \phi, c$ and $\aacu + \phiacu_t, \bacu + \nabla \phiacu, \cacu$.
 	    
 From our hypothesis, we have
 		\[
 		\phi(x,T)=\phiacu(x,T), ~~ \phi_t(x,T)=\phiacu_t(x,T) 
 		\qquad
 		\]
 Hence, on $ H_{\xi_i, \tau}$, $i=1,2,3,4$, we have
 		\begin{align*}
 		    & [e^{\phi} u, (e^{\phi} u)_t](\cdot, T, \xi_i, \tau)] = [e^{\acute \phi} \acute u, (e^{\acute \phi} \acute u)_t](\cdot, T, \xi_i, \tau)], \qd 
 		   \forall \tau\in \real,\\
 		    & [e^{\phi} v, (e^{\phi} v)_t](\cdot, T, \xi_4, \tau)] = [e^{\acute \phi} \acute v, (e^{\acute \phi} \acute v)_t](\cdot, T, \xi_4, \tau)],\qd \forall \tau\in \real.
 		\end{align*}
 		 Thus to prove Theorem \ref{thm:abcunique}, it suffices to work with the modified
 		 coefficients $(a + \phi_t, b + \nabla \phi, c)$ and
 		 $(\aacu + \phiacu_t, \bacu + \nabla \phiacu, \cacu)$ 
 		 because $(a + \phi_t, b + \nabla \phi)$ has {the} same curl as $(a, b)$
 		 and $(\aacu + \phiacu_t, \bacu + \nabla \phiacu)$ has the same
 		 curl as $(\aacu, \bacu)$.
 		 Further, since $a,b, \aacu, \bacu$ are supported away from $\{\xi\} \times \real$, we have
 		 \begin{align*}
 		  \left(\dd_t + \theta_4 \cdot \nabla \right) \phi & =  \int_{0}^{|x - \xi_4|} \f{\d}{\d s} ( a + \theta_4 \cdot b) (x - s\theta_4, t - s) \ \d s\\
 		  & = - ( a + \theta_4 \cdot b)(x, t)
 		\end{align*}
 		which implies
 		\begin{align*}
 		  (a + \phi_t) + \theta_4 \cdot (b + \nabla \phi) = 0,
 		  \qquad 
 		  (\aacu + \phiacu_t) + \theta_4 \cdot (\bacu + \nabla \phiacu) 
 		  = 0
 		\end{align*}
 		So, to prove Theorem \ref{thm:abcunique}, we may assume that
 		\begin{align}
 		    a + \theta_4 \cdot b = \acute a + \theta_4 \cdot \acute b = 0.
 		    \label{eq:atbzero}
 		\end{align}
 		We show that $(a,b,q) = (\aacu, \bacu, \qacu)$ for these modified triples, which will prove the theorem. There is a subtle point here which we discuss next.
 	    
 	    { We have replaced $a,b$ (and do the same for $\aacu, \bacu$) by a special $a,b$ (for which $a+\theta_4 \cdot b=0$), 
	    which, just for this paragraph, we call $\tilde{a}:=a+\phi_t, \tilde{b} := b+\nabla \phi$. Now 
	    $\text{curl}(\tilde{a}, \tilde{b}) = \text{curl}(a,b)$  
	    and the corresponding values of $\tilde{u},\tilde{u}_t, \tilde{v}, \tilde{v}_t$ and 
	    $\tilde{\uacu}, \tilde{\uacu}_t, \tilde{\vacu}, \tilde{\vacu}_t$ agree on $H_{\xi, \tau}$ because they agree for 
	    $u, u_t, v, v_t$ and $\uacu, \uacu_t, \vacu, \vacu_t$ and also for $\phi, \phi_t$ and $\phiacu, \phiacu_t$.
	    We have not modified $c, \cacu$. Now $\tilde{q} = c -\tilde{a}_t + \nabla \cdot \tilde{b} 
	    + |\tilde{a}|^2 - |\tilde{b}|^2$ and there is a similar expression for $\tilde{\qacu}$. If we show that
	    $\tilde{a} = \tilde{\aacu}, \tilde{b} = \tilde{\bacu}, \tilde{q} = \tilde{\qacu}$, then we get $c=\cacu$ and
	    $\text{curl}(a,b) = \text{curl}(\tilde{a}, \tilde{b}) = \text{curl}(\tilde{\aacu}, \tilde{\bacu}) = \text{curl}(\aacu,\bacu)$, proving the theorem, 
	    but note that we cannot claim $q=\qacu$.}
 		 
 		Define
 		\[
 		\tau_{min} := \text{min} \{ - (|\xi_i| + T+1): i=1,2,3,4 \},
 		\qquad
 		\tau_{max} := \text{max} \{ 2T+1 - |\xi_i|: i=1,2,3,4\},
 		\]
 		and our $\tau$ will vary in the interval $[\tau_{min}, \tau_{max}]$. For use below, 
 		observe that
 		the intersection of the supports of the modified $a,b,q$ with $\real^3 \times (-\infty,T]$ 
 		is contained in $(T+1)\Bbar \times [0,T]$ and the union of the $C_{\xi,\tau}$, as $\tau$ varies
 		over $[\tau_{min}, \tau_{max}]$, contains $(T+1)\Bbar \times [0,T]$.
 		
 		From \eqref{ch BVP for Heaviside 1} - \eqref{ch BVP for Heaviside 2},
 		for each $i=1, \cdots, 4$, we obtain
 		\begin{align}
 		   \mc{L} \overline u = 2 \overline a \acute u_t - 2 \overline b \cdot \nabla \acute u - \overline q \acute u + \f{2 \overline b \cdot (x - \xi_i)}{|x - \xi_i|^3} - \f{\overline q (x,t)}{|x - \xi_i|}, 
 		   & \qd \tn{ in } Q_{\xi_i,\tau},
 		   \nn
 		   \\
 		   \overline u = \alpha - \acute \alpha, 
 		   &\qd \tn{ on } C_{\xi_i, \tau}.
 		   \label{eq:ubabqunique}
 		\end{align}
 		Noting that $\{\xi_i\} \times(-\infty, T]$ does not intersect the supports of $\bbar$ and $\qbar$,
 		we have 
 		\[
 		\|\mc{L} \ubar\|_{0, \sigma, Q_{\xi, \tau}} \cleq \,
 		\|[\abar, \bbar, \qbar]\|_{0, \sigma, Q_{\xi_i,\tau}}.
 		\]
 		So applying Proposition \ref{prop:carleman} to $\ubar$ on the region $Q_{\xi_i,\tau}$, using
 		\ref{eq:ubabqunique}, and proceeding as in the proof of Theorem \ref{thm:abstab} including the fact that the locations $\xi_1, \cdots, \xi_4$ are diverse w.r.t $(T+1)B$, for large enough
 		$\sigma$ we obtain
 		\begin{align}\label{pre-uniqueness 1}
 		   \sigma \|[\overline a, \overline b]\|^2_{0, \sigma, \real^3 \times [0, T]} 
 		   ~ \le ~ C_1 \left ( 
 		   \|[\overline a, \overline b, \overline q]\|^2_{0, \sigma, \real^3 \times [0, T]} 
 		   \right ).
 		\end{align}
 		Note the $\qbar$ term on the RHS of \eqref{pre-uniqueness 1}. This was absent 
 		from the RHS of the similar inequality, when proving Theorem \ref{thm:abstab}, because $\qbar=0$ for
 		Theorem \ref{thm:abstab}. 
 		
 	Next we estimate the norm of $\qbar$ using the data coming from $v, \vacu$. 
 		Taking the differences of 
 		\eqref{ch BVP for delta 1}, \eqref{ch BVP for delta 2} for the 
 		coefficients $(a, b, q)$ and $(\acute a, \acute b, \acute q)$, 
 		we have
 		\begin{align}
 		\label{pre-uniqueness delta 1}   
 		\mc{L} \overline v_4 =  2 \overline a \,\vacu_{4t} - 
 		2 \overline b \cdot \nabla \acute v_4 - \overline q \acute v_4    , & \qd \tn{in } Q_{\xi_4, \tau}, 
 		\\
 		\label{pre-uniqueness delta 2}   
 		2(\dd_t + \theta_4 \cdot \nabla - (a + \theta_4 \cdot b) + r^{-1}) \overline v =  \acute{\mc{L}}\acute \alpha_4 - \mc{L} \alpha_4 + 2 ( \overline a + \theta_4 \cdot \overline b) \acute v_4, 
 		& \qd \tn{on } C_{\xi_4, \tau}.
 		\end{align}
 		Since $ a + \theta_4 \cdot b = \acute a + \theta_4 \cdot \acute b = 0$, we have 
 		\begin{align*}
 		    \alpha_4 (x,t) = \acute\alpha_4(x,t)= \f{1}{|x - \xi_4|},
 		\end{align*}
 		hence, for $x \neq \xi_4$,
 		\begin{align*} 
 		    \acute{\mc{L}}\acute \alpha_4 - \mc{L} \alpha_4 = -\left(\mc{L} - \acute{\mc{L}}\right)\alpha_4 = \f{ \overline b(x,t) \cdot (x - \xi_4)}{|x - \xi_4|^3} - \f{\overline q(x,t)} {|x - \xi_4|},
 		\end{align*}
 		so \eqref{pre-uniqueness delta 2} gives us
 		\begin{align}\label{pre-uniqueness reln 2}
 		 2 \left(\dd_t + \theta_4 \cdot \nabla - (a + \theta_4 \cdot b) + r^{-1} \right)\overline v = 
 		 \f{2 \overline \theta_4 \cdot \bbar}{|x - \xi_4|^2} - \f{\overline q (x,t)}{|x - \xi_4|},
 		\qquad  \text{~on } C_{\xi_4, \tau}.
 		\end{align}
 		
 	From (\ref{pre-uniqueness reln 2}), noting that $\{\xi_4 \} \times (-\infty, T]$ does not intersect the supports of $\abar, \bbar, \qbar$ and $\vbar$, we have
 		\begin{align*}
 		|\qbar| \, \cleq \, |\bbar| + |\nabla_C \, \vbar| + |\vbar|,
 		\qquad \text{on } C_{\xi_4,\tau}
 		\end{align*}
 		where $\nabla_C$ is the gradient on $C_{\xi_4, \tau}$ and the constant is dependent only
 		on $T, \xi_4$ and $\|[a,b, \vacu_4]\|_{C^0}$, hence only on $T, M$ and $\vert \xi_4 \vert$.
 		Therefore
 		\[
 		\|\qbar\|_{0,\sigma, C_{\xi_4,\tau}}
 		\cleq \,
 		\|[\abar, \bbar] \|_{0,\sigma, C_{\xi_4,\tau}} + \|\vbar\|_{1, \sigma, C_{\xi_4,\tau}}.
 		\]
 		Also,  from (\ref{pre-uniqueness delta 1}), we have
 		\[
 		\|\mc{L} \vbar_4 \|_{0,\sigma, Q_{\xi_4,\tau}} \cleq \, 
 		\|[\abar, \bbar, \qbar]\|_{0,\sigma, Q_{\xi_4, \tau}}
 		\]
 		with the constant dependent only on $T$ and $\|[\vacu_4]\|_{C^1}$, hence dependent only on $T$ and $M$. 
 		Using these observations in Proposition \ref{prop:carleman} applied to
 		$\overline v$ on the region $Q_{\xi_4, \tau}$ and noting that $\vbar, \vbar_t$ are zero on
 		$H_{\xi_4, \tau}$, we obtain
 		\begin{align*}
 		    \sigma\|\overline q\|^2_{0, \sigma, C_{\xi_4, \tau}} 
 		    \cleq \|[\abar, \bbar, \overline q\|^2_{0, \sigma, Q_{\xi_4, \tau}}
 		    + \sigma \|[\overline a, \overline b]\|^2_{0,\sigma, C_{\xi_4,\tau}},
 		    \qquad \forall \tau \in [\tau_{min}, \tau_{max}],
 		\end{align*}
 		for $\sigma$ large enough. 
 		Integrating this inequality w.r.t $\tau$, over the interval $[\tau_{min},
 		\tau_{max}]$, using integral relations similar to the one in the proof of Theorem \ref{thm:qstab}, we obtain
 		\begin{align*}
 		    \sigma \|\overline q\|^2_{\real^3 \times [0,T]}
 		    \le 
 		    C_2 \left ( \|\overline q\|^2_{\real^3 \times [0,T]}
 		    + \sigma \|[\overline a, \overline b]\|^2_{\real^3 \times [0,T]} \right ),
 		\end{align*}
 		for $\sigma$ large enough.
 		So adding to this a $C_2+1$ multiple of \eqref{pre-uniqueness 1} 
 		we obtain
 		\begin{align*}
 		    \sigma \|[\overline a, \overline b, \overline q]\|^2_{0, \sigma, \real^3 \times [0, T]} 
 		   \, \cleq \, \|[\overline a, \overline b, \overline q]\|^2_{0, \sigma, \real^3 \times [0, T]},
 		\end{align*}
 		for $\sigma$ large enough. Hence, choosing $\sigma$ large enough, we obtain $\abar=0, \bbar=0, \qbar=0$, proving the theorem.


\section{Proof of Theorem \ref{thm:abcstab}}


{
In Theorem \ref{thm:abcunique} our aim was uniqueness in the recovery of curl$(a,b)$ and $c$ and that followed from
uniqueness in the recovery of a gauge equivalent $a,b,c$ for which $a+ \theta \cdot b=0$. It did not require estimating
first order derivatives of the gauge equivalent $a,b$. The situation in the proof of Theorem \ref{thm:abcstab} is different as a stability estimate is desired in the recovery of curl$(a,b)$ and $c$; so we must estimate first order derivatives of the gauge equivalent $a,b,c$.

We estimate the first order derivatives of $\abar, \bbar$ using the PDEs and the boundary conditions for $\ubar$ and $\vbar$. We note that
\[
\mc{L} - \mc{\Lacu}
= \Box - 2 \abar \pa_t + 2 b \cdot \nabla + (a^2 - b^2 - \aacu^2 + \bacu^2) + (\cbar - \abar_t + \nabla \cdot \bbar).
\]
The $-\abar_t + \nabla \cdot \bbar$ term prevents us from getting useful estimates on $\abar, \bbar$ because this term results in 
$\|[\abar, \bbar]\|_0$ being estimated by $\|[\abar, \bbar]\|_1$ and $\|[\abar, \bbar]\|_1$ estimated by
$\|[\abar, \bbar]\|_2$. The way out is to replace $a,b$ (and $\aacu, \bacu$) by a gauge equivalent $a,b$ such that 
$c- a_t + \nabla \cdot b =0$. Such a gauge equivalent $a,b$ may be constructed with the gauge $\psi$ in the statement of the theorem, but the data for this gauge equivalent $a,b,c$ requires the original data and the value of $\psi$ on $t=T$. That is why the statement of Theorem
\ref{thm:abcstab} has the $\psi$ terms on the RHS.
Note that when we have estimated the first order derivatives of $\abar, \bbar$, we automatically get an estimate for $\cbar$ since
$\cbar = \abar_t - \nabla \cdot \bbar$.

The PDE and the boundary condition for $\ubar$ are used to estimate $\abar, \bbar$, as done in the proof of Theorem \ref{thm:abstab}. We also differentiate this PDE in directions tangential to the cone $C_{\xi, \tau}$ to estimate these directional derivatives of $\abar, \bbar$. However, to estimate the derivative of $\abar, \bbar$ in the remaining direction, the normal to $C_{\xi, \tau}$, we use the PDE and the boundary condition for $\vbar$ equation, and a decomposition of 
$\mc{L} \alpha$ as the sum of the normal (to $C_{\xi, \tau}$) derivative of $(a+\theta \cdot b)$ and derivatives of $\alpha$ in directions tangential to $C_{\xi, \tau}$ - see (\ref{eq:Lalphaatemp}).

We now proceed with the proof of the theorem.

}

The $u, \uacu, v, \vacu$ are the solutions guaranteed by Propositions \ref{Heaviside} and
 	    \ref{Delta} for the coefficients $a,b,c$ and  $\aacu, \bacu, \cacu$.
 	    Since $a,b,c$ are supported in $\Bbar \times [0,T]$ and 
 		$\xi \in \real^3 \setminus (T+1)\Bbar$, one may check that for a fixed $\xi \in (T+1)B$, 
 		the values of $u, u_t, u_{tt}, v, v_t$ 
 		and $\uacu, \uacu_t, \uacu_{tt}, \vacu, \vacu_t$ on $\real^3 \times \{t \smeq T \}$ are zero for $\tau>T+1 - |\xi|$
 		and do not change as $\tau$ varies over $\tau \in (-\infty, -1 - |\xi|]$. Hence, in the statement of Theorem \ref{thm:abcstab}, the $\tau$ integrals may be replaced by $\tau$ integrals over any finite interval that contains $[-1-|\xi_i|, T+1 - |\xi_i|]$.
 		
 		We note that $\psi(x,t)$ is a smooth function on $\real^3 \times \real$ and its support intersects
 		$\real^3 \times (-\infty,T]$ in a region contained in $(T+1)\Bbar \times (-\infty,T]$. 		
 		
 		We recall from the first section that, if $U$ and $V$ are the solutions, corresponding to the 
 		$a, b, c$, guaranteed by Propositions \ref{Heaviside} and \ref{Delta}, 
 		then $e^{\psi} U$ and $e^{\psi} V$ are the solutions, corresponding to the coefficients 
 		$a + \psi_t, b + \nabla \psi, c$, guaranteed by Propositions \ref{Heaviside} and \ref{Delta}. Further
 		one may verify that $[a,b]$ and $[a + \psi_t, b + \nabla \psi]$ have the same curl and
 		\[
 		c - \left(a + \psi_t\right)_t + \nabla \cdot \left( b + \nabla \psi \right) = 0.
 		\]
 		We also observe that
 		the intersection of the supports of $a + \psi_t, b + \nabla \psi, c$, with
 		$\real^3 \times (-\infty, T]$ are contained in $(T+1)\Bbar \times (-\infty, T]$. In particular,
 		$\{\xi_i\} \times (-\infty,T]$ does not intersect the supports of $a + \psi_t, b + \nabla \psi, c$
 		so Propositions \ref{Heaviside} and \ref{Delta} apply even to these modified $a,b,c$.
 		
 		For bounded functions $\psi,\acute \psi,w, \acute w$, we have
 		\begin{align*}
 		    \left| e^{\psi} w - e^{\acute \psi} \acute w \right| \le & \left|e^{\psi} (w - \acute w)\right| + \left|\left(e^{\psi} - e^{\acute \psi}\right) \acute w\right| \\
 		    & \le C \left( |w - \acute w| + \left|e^{\psi} - e^{\acute \psi}\right|\right) \\
 		    & \le C \left( |w - \acute w| + |\psi - \acute \psi| \right),
 		\end{align*}
 		and one has similar estimates for the first and second order derivatives of $e^\psi w, e^{\psiacu} \wacu$ also. 
 		
 		Keeping the above observations in mind, it is enough to prove 
 		Theorem \ref{thm:abcstab} for $a,b,c$ and 
 		$\aacu, \bacu, \cacu$ which are supported in $(T+1) \Bbar \times (-\infty,T]$ for which
 		\begin{align*}
 		    c- a_t + \nabla \cdot b = 0, \qd \cacu - \aacu_t + \nabla \cdot \acute b = 0,
 		\end{align*}
 		and the $\tau$ integrals in the statement of Theorem \ref{thm:abcstab} are over $[\tau_{min}, \tau_{max}]$ where
 		\[
 		\tau_{min} := \text{min} \{ - (|\xi_i| + T+1): i=1,2,3,4 \},
 		\qquad
 		\tau_{max} := \text{max} \{ 2T+1 - |\xi_i|: i=1,2,3,4\}.
 		\]

 		For use below, observe that
 		the union of the $C_{\xi,\tau}$, as $\tau$ varies
 		over $[\tau_{min}, \tau_{max}]$, contains $(T+1)\Bbar \times [0,T]$, so, in particular, it contains
 		the intersection of $\real^3 \times (-\infty,T]$ with the supports of the modified 
 		$a,b,c, \aacu, \bacu, \cacu$.
 		
 		Note that $\psi=0$ and $\psiacu=0$ for these modified
 		$a,b,c$ and $\aacu, \bacu, \cacu$ and the operators $\mc{L}$ and $\mc{\acute L}$ become 
 		\begin{align*}
 		    \mc{L} = \square - 2 a \dd_t + 2 b \cdot \nabla + a^2 - b^2, \qd
 		    \mc{\acute L} = \square - 2 \acute a \dd_t + 2 \acute b\cdot \nabla  + \acute a^2 - \acute b^2.
 		\end{align*}

 		To keep the expressions simple, while we work with
 		a fixed but arbitrary $\xi \in \real^3 \setminus (T+1)\Bbar$ and $\tau \in [\tau_{min},
 		\tau_{max}]$, we 
 		suspend showing
 		the dependence on $\xi, \tau$. We write $Q_{\xi,\tau}$,
 		$H_{\xi,\tau}$ and $C_{\xi,\tau}$ as $Q,H$ and $C$. Further, 
 		we define
 		\[
 		D = \nabla_{x,t}, \qquad r(x) = |x- \xi|, \qquad \theta(x) = \frac{x-\xi}{|x-\xi|},
 		\qquad x \neq \xi.
 		\]
 		and we write them as $r,  \theta$.
 		
 		We have
 		\begin{align*}
 		(a^2 - |b|^2) - (\aacu^2 - |\bacu|^2)
 		= (a+\aacu)(a-\aacu) - (b + \bacu) \cdot (b - \bacu)
 		= (a+\aacu) \abar - (b+ \bacu) \cdot \bbar.
 		\end{align*}
 		Hence $\ubar := u - \uacu$ is a solution of the characteristic BVP
 		\begin{align}
 		 \label{gen diff eqn for u 1}   \mc{L} \overline u = 
 		 2 \overline a \acute u_t - 2 \overline b \cdot \nabla \acute u +  2 r^{-2} \theta \cdot \bbar  +\left((b + \acute b) \overline b - (a + \acute a) \overline a \right) \left(\acute u + r^{-1}
 		 \right),
 		 & \qd \tn{ in } Q, \\
 		 \label{gen diff eqn for u 2}  \overline u = \alpha - \acute \alpha, & \qd \tn{ on } C.
 		\end{align}
 		So applying Proposition \ref{prop:carleman} to $\ubar$ on $Q$, we obtain
 		\begin{align} \label{u estimate}
 		    \sigma \left( \|\overline u\|^2_{1, \sigma, Q} + \|\alpha - \acute \alpha\|^2_{1, \sigma, C} \right) \cleq
 		    \, \|[\overline a, \overline b]\|^2_{0, \sigma, Q} 
 		    + \|\overline u\|^2_{1, \sigma, H} + \|\dd_t \overline 
 		    u\|^2_{0, \sigma, H} .
 		\end{align}
 	
 		Next, we obtain higher order estimates on $\ubar$ by differentiating 
 		(\ref{gen diff eqn for u 1}) in directions tangential to $C$.
 		If we write $x = (x^1, x^2, x^3)$ and $\xi = (\xi^1, \xi^2, \xi^3)$,
 		the vector fields 
 		\[
 		\dd_t + \theta\cdot \nabla, \qquad 
 		\Omega_{lm} = (x^l - \xi^l) \partial_m - (x^m - \xi^m) \partial_l, ~~l,m= 1,2,3,
 		\]
 		span the tangent space to $C$ at any point on $C$. Differentiating
 		(\ref{gen diff eqn for u 1}), (\ref{gen diff eqn for u 2})
 		by $ \Omega_{lm}$ we obtain
 		\begin{align}
 		   \nn  \mc{L}\left(\Omega_{lm}\overline u\right) = &\  \Omega_{lm}\left( 2 \overline a \acute u_t - 2 \overline b \cdot \nabla \acute v + 2 r^{-2} \theta \cdot \bbar 
 		   +\left((b + \acute b) \overline b - (a + \acute a) \overline a \right) \left(\acute u + r^{-1} \right)\right) \\
 		   \label{radial der of u 1} & \qd + [\mc{L}, \Omega_{lm}]\overline u, \qd \tn{ in } Q, \\
 		   \label{radial der of u 2}   \Omega_{lm} (\overline u) = &\  \Omega_{lm} \left(\alpha - \acute \alpha \right), 
 		   \qd \tn{ on } C.
 		\end{align}
 		 Since principal part of $\mc{L}$ is a constant 
 		 coefficient operator, the operator $[\Lcal, \Omega_{lm}]$ is a first order operator, hence (\ref{radial der of u 1}) implies
 		\begin{align}\label{commutator 1}
 		   \left| \mc{L}\left(\Omega_{lm} \overline u\right) \right| 
 		   \cleq  \left|[\overline a, \overline b, D \overline a, D \overline b]\right| + 
 		   \left|[\overline u, D \overline u]\right|,
 		   \qquad \text{on } Q.
 		\end{align}
 		Hence, using \eqref{radial der of u 2} and \eqref{commutator 1}
 		and Proposition \ref{prop:carleman} applied to $\Omega_{lm}\overline u$ on $Q$, we obtain
 		\begin{align}
 		 \sigma \|\Omega_{lm} (\alpha - \acute \alpha)\|^2_{1, \sigma, C}
 		 \, \cleq \, & 
 		 \|[\overline a, \overline b, D \abar, D \bbar]\|^2_{0, \sigma, Q} 
 		 + \|[\overline u, D \ubar]\|^2_{0, \sigma, Q}  
 		 \nn
 		 \\
 		 & \qquad + \sigma 
 		 \|\Omega_{lm} \overline u\|^2_{1, \sigma, H} 
 		  + \sigma \|\dd_t \left(\Omega_{lm} \overline u\right)\|^2_{0, \sigma, H}.
 		  \label{derivative estimate 1} 
 		\end{align}
 		
 		Using a similar argument for the vector field 
 		$\dd_t + \theta \cdot \nabla$, we obtain
 		\begin{align}
 		 \sigma \|(\dd_t + \theta \cdot \nabla)(\alpha - \acute \alpha)\|^2_{1, \sigma, C}
 	\,	 \cleq \, 
 	\|[\overline a, \overline b, D \abar, & D \bbar]\|^2_{0, \sigma, Q} 
 		 + \|[\overline u, D \ubar]\|^2_{0, \sigma, Q}  
 		 \nn
 		 \\
 		   & + \sigma \|[ \overline u_t, \nabla \overline u]\|^2_{1, \sigma, H}
 		   + \sigma\| \overline u_{tt}\|^2_{0, \sigma, H}.
 		  \label{derivative estimate 2}
 		\end{align}
 
 Combining (\ref{u estimate}), (\ref{derivative estimate 1}) and (\ref{derivative estimate 2}) we obtain
 \begin{align}
    \sigma  \| [ \alpha - \alphaacu, & (\pa_t + \theta \cdot \nabla)(\alpha - \alphaacu), 
     \Omega_{lm} (\alpha - \alphaacu)] \|^2_{1, \sigma, C} 
     \nn
     \\
    &  \cleq \, 
      \|[\abar, \bbar, D \abar,  D \bbar]\|^2_{1,\sigma,Q} 
     +  \sigma (\|\ubar\|^2_{2, \sigma, H} + \| \ubar_t\|^{{2}}_{1,\sigma,H} + \| \ubar_{tt}\|^{{2}}_{0,\sigma,H} ),
     \label{eq:thm17abbb}
 \end{align}
 for large enough $\sigma$, where the $\|\ubar, D \ubar\|_{0,\sigma,Q}$ term on the 
 RHS of (\ref{derivative estimate 1}) is absorbed by the $\sigma \|\ubar\|_{1,\sigma,Q}$ term on the LHS
 of \eqref{u estimate}.

Below, we will need the observations that  $\alpha - \alpha'=0$ in a neighborhood of 
 		$\{\xi_i\} \times (-\infty, T]$ and $\alpha$ and its derivatives are bounded on the supports of $\abar, \bbar$. Further, $\alpha$ is positive and bounded away from zero on 
 		$(T+1) \Bbar \times (-\infty,T]$.

We use (\ref{eq:thm17abbb}) and the relation between $\alpha - \alphaacu$ and 
$\abar + \theta \cdot \bbar$  to obtain estimates for $\abar + \theta \cdot \bbar$.
Using (\ref{transport eqn}), we observe 
 		\begin{align}
 		    \left(\dd_t + \theta\cdot \nabla\right)(\alpha - \acute \alpha) 
 		    & = (a+\theta \cdot b) \alpha - (\aacu +\theta \cdot \bacu) \alphaacu 
 		    -r^{-1}(\alpha - \acute \alpha)
 		    \nn
 		    \\
 		    & = \alpha (\overline a + \theta \cdot\overline b) + (a + \theta \cdot b - r^{-1})(\alpha - \acute \alpha).
 		    \label{eq:thm17a}
 		\end{align}
 This implies
 		\beqn
 		\|\abar + \theta \cdot \bbar\|_{0,\sigma,C}^2 \,
 		\cleq \,  \|(\pa_t + \theta \cdot \nabla)(\alpha - \alphaacu)\|^2_{0, \sigma, C} + \|\alpha - \alphaacu\|_{0, \sigma, C}^2.
 		\label{eq:thm17d}
 		\eeqn
 		
Next, differentiating (\ref{eq:thm17a}) w.r.t $\pa_t + \theta \cdot \nabla$ we obtain
 		\begin{align}
 		(\pa_t + \theta \cdot \nabla)^2 (\alpha - \alphaacu) 
 		& = \alpha (\pa_t + \theta \cdot \nabla) (\abar + \theta \cdot \bbar) + f \, (\abar + \theta \cdot \bbar)
 		+ g \, (\pa_t + \theta \cdot \nabla) (\alpha - \alphaacu) 
 		\nn\\
 		& \qquad + h \, (\alpha - \alphaacu)
 		\label{eq:thm17b}
 		\end{align}
 		for some bounded functions $f,g,h$.
 		Similarly, differentiating (\ref{eq:thm17a}) w.r.t $\Omega_{lm}$ we obtain
 		\begin{align}
 		\Omega_{lm} (\pa_t + \theta \cdot \nabla) (\alpha - \alphaacu)
 		& = \alpha \, \Omega_{lm} (\abar + \theta \cdot \bbar) + f \, (\abar + \theta \cdot \bbar)
 		+ g \, \Omega_{lm} (\alpha - \alphaacu) 
 		 + h \, (\alpha - \alphaacu)
 		 \label{eq:thm17c}
 		\end{align}
 		for some bounded functions $f,g,h$.
 		
 		Using (\ref{eq:thm17b}), (\ref{eq:thm17c}), we obtain
 		\begin{align*}
 		  \|(\dd_t & + \theta\cdot \nabla)  (\overline a+ 
 		  \theta \cdot \overline b)\|^2_{0, \sigma, C} 
 		  + \sum_{l,m= 1}^{3}\|\Omega_{lm}(\overline a+ \theta \cdot \overline b)\|^2_{0, \sigma, C} \\
 		 & \cleq \, \|\abar + \theta \cdot \bbar \|^2_{0,\sigma, C}
 		 + \|(\dd_t + \theta \cdot \nabla) \left(\alpha - \acute \alpha \right)\|^2_{1, \sigma, C} + \sum_{l,m=1}^{3}\|\Omega_{lm} \left(\alpha - \acute \alpha \right)\|^2_{1, \sigma, C} 
 		 + \|\alpha - \acute \alpha\|^2_{1, \sigma, C} .
 		\end{align*}
 		If we use (\ref{eq:thm17d}) in this, we obtain
 		\begin{align}
 		  \nn  \| \abar +  \theta \cdot \bbar & \|^2_{0,\sigma, C}
 		  + \|(\dd_t  + \theta\cdot \nabla)  (\overline a+ 
 		  \theta \cdot \overline b)\|^2_{0, \sigma, C} 
 		  + \sum_{l,m= 1}^{3}\|\Omega_{lm}(\overline a+ \theta \cdot \overline b)\|^2_{0, \sigma, C} 
 		  \\
 		 & \cleq \,
 		 \|(\dd_t + \theta \cdot \nabla) \left(\alpha - \acute \alpha \right)\|^2_{1, \sigma, C} + \sum_{l,m=1}^{3}\|\Omega_{lm} \left(\alpha - \acute \alpha \right)\|^2_{1, \sigma, C} 
 		 + \|\alpha - \acute \alpha\|^2_{1, \sigma, C}.
 		 \label{trans eqn for diff of alpha} 
 		\end{align}
 	So combining \eqref{eq:thm17abbb} and \eqref{trans eqn for diff of alpha} we obtain
 	\begin{align}
  \| \abar +  \theta \cdot \bbar  \|^2_{0,\sigma, C}
 + \|(\dd_t  + \theta\cdot \nabla)  & (\overline a+ \theta \cdot \overline b)\|^2_{0, \sigma, C} 
 		  + \sum_{l,m= 1}^{3}\|\Omega_{lm}(\overline a+ \theta \cdot \overline b)\|^2_{0, \sigma, C} 
 		  \nn
 		  \\
 		  & \cleq \; 
 		  \frac{1}{\sigma}\|[\abar, \bbar, D \abar, D \bbar] \|^{{2}}_{0,\sigma, Q} 
 		  + \|\ubar\|^2_{2, \sigma, H} + \| \ubar_t\|^{{2}}_{1,\sigma,H} + \| \ubar_{tt}\|^{{2}}_{0,\sigma,H}.
 		 \label{eq:thm17pqr}
 	\end{align}

 		We now obtain estimates of a derivative of $\abar + \theta \cdot \bbar$ in a direction not 
 		tangential to $C$, using the $V$ solution. We note that $\vbar:= v - \vacu$ is a solution of
 		the characteristic BVP
 		\begin{align}
 		   \label{gen diff eqn for v 1}  \mc{L} \overline v = 2 \overline a \acute v_t - 2 \overline b\cdot \nabla \acute v + \left((b + \acute b) \overline b - (a + \acute a) \overline a \right) \acute v, 
 		   & \qd \tn{ in } Q,  \\ 
 		  \label{gen diff eqn for v 2}  
 		  2 \left(\dd_t + \theta \cdot \nabla - (\overline a + \theta \cdot \overline b) + r^{-1}\right) \overline v= \acute{\mc{L}} \acute \alpha - \mc{L} \alpha + 2 (\overline a + \theta \cdot \overline b) \acute v, 
 		  & \qd \tn{ on } C.
 		\end{align}
 		So applying Proposition \ref{prop:carleman} to $\overline v$ on the region $Q$ and using 
 		\eqref{gen diff eqn for v 1}, we obtain
 		\begin{align}\nn
 		   \sigma \|\overline v\|^2_{1, \sigma, C} ~ \cleq ~
 		   \|[\overline a, \overline b]\|^2_{0,\sigma,Q} + 
 		   \sigma \|\overline v\|^2_{1, \sigma, H} + \sigma \| \overline v_t \|^2_{0, \sigma, H},
 		\end{align} 
 		so, in particular,
 		\begin{align}\label{intermediate v estimate 1}
 		  \| \vbar \|_{0,\sigma, C}^2 + \| ( \dd_t + \theta \cdot \nabla) 
 		    \overline v\|^2_{0, \sigma, C}
 		    ~\cleq ~
 		    \frac{1}{\sigma} \|[\overline a, \overline b]\|^2_{0,\sigma,Q} + 
 		    \|\overline v\|^2_{1, \sigma, H} + \|\dd_t \overline v\|^2_{0, \sigma, H}.
 		\end{align}
 		From \eqref{gen diff eqn for v 2} we have
 		\begin{align*}
 		   |(\dd_t + \theta \cdot \nabla) \overline v|
 		   + |\vbar| \, \cgeq 
 		   \, |\acute{\mc{L}} \acute \alpha - \mc{L} \alpha| - |[\overline a, \overline b]| ,
 		    \qquad \text{on } C.
 		\end{align*}
 		So using this in \eqref{intermediate v estimate 1} we obtain
 		\begin{align}
 		    \| \mc{L} \alpha -  \Lacu \alphaacu \|^2_{0,\sigma,C}
 		    \, \cleq \,
 		    \|[\abar, \bbar]\|^2_{0, \sigma, C} + \frac{1}{\sigma} \|[\overline a, \overline b]\|^2_{0,\sigma,Q} + 
 		    \|\overline v\|^2_{1, \sigma, H} + \|\dd_t \overline v\|^2_{0, \sigma, H}.
 		    \label{eq:thm17rst}
 		\end{align}
 
 {
		We claim
		\beqn
		\mc{L} \alpha = \alpha \left(\dd_t - \theta \cdot \nabla \right) (a + \theta \cdot b) - 
		\Delta_S \alpha + 2 b^\perp \cdot \nabla \alpha 
		- ( |b^\perp|^2 + 2 r^{-1} \theta \cdot b) \alpha,
		\label{eq:Lalphaatemp}
		\eeqn
    where $\Delta_S$ denotes the spherical Laplacian and $b^\perp = b - (b\cdot\theta)\theta$. This identity expresses the normal (to $C_{\xi,\tau}$) derivative of $a+\theta b$ in terms of $\mc{L} \alpha$ and the tangential (to $C_{\xi,\tau}$) derivatives of $\alpha$. We restate this relation 
    as Lemma \ref{decomposition} and its proof is given at the end of the proof of Theorem \ref{thm:abcstab}
    }
 		
 		Using \eqref{eq:Lalphaatemp} for $\mc{L} \alpha$ and $\Lacu \alphaacu$ and subtracting those identities, we obtain
 		\begin{align*}
 		    \mc{L}\alpha - \acute{\mc{L}}\acute \alpha =\  
 		    & \alpha ( \dd_t - \theta \cdot \nabla)( \overline a + \theta \cdot \overline b) + (\alpha - \acute \alpha)(\dd_t - \theta \cdot \nabla)(\acute a + \theta \cdot \acute b) 
 		     - \Delta_S (\alpha - \acute \alpha)  
 		     \\
 		     & \qquad + 2 (b^\perp \cdot \nabla) (\alpha - \alpha')
 		     -2 \bbar^\perp \cdot \nabla \alphaacu
 		     - (|b^\perp|^2 + 2 r^{-1} \theta \cdot b)(\alpha - \acute \alpha) 
 		     \\
 		      & \qquad   - ((b^\perp + \bacu^\perp) \cdot \overline b^{\perp} + 2 r^{-1} \theta \cdot \bbar) \alphaacu,
 		\end{align*}
 		implying
 		\begin{align*}
 		  |\mc{L}\alpha - \acute{\mc{L}}\acute \alpha|  
 		 \,  \cgeq  \,|( \dd_t -  \theta \cdot \nabla)( \overline a + \theta \cdot \overline b) | - 
 		 |\alpha - \acute \alpha| - |(b^\perp \cdot \nabla)(\alpha - \alphaacu)|
 		  - | \Delta_{S}(\alpha - \acute\alpha)| - |\overline b|,
 		\end{align*}
 		where we have used the fact that $\alpha$ has a positive lower bound on $C$. Hence using this in
 		\eqref{eq:thm17rst} we obtain
 		\begin{align}
 		  \nn  \| (\dd_t - \theta\cdot \nabla)(\overline a + \theta \cdot \overline b)\|^2_{0, \sigma, C}
 		 ~ \cleq ~\ &  \frac{1}{\sigma} \|[\overline a, \overline b]\|^2_{0, \sigma,Q}
 		  + \|[\overline a, \overline b]\|^2_{0, \sigma, C}
 		  + \|\overline v\|^2_{1, \sigma,H} + \|\overline v_t\|^2_{0, \sigma, H}
 		  \nn
 		  \\
 		 & \qquad + \ \|[\alpha - \acute \alpha, (b^\perp \cdot \nabla)(\alpha - \alphaacu), 
 		 \Delta_{S} (\alpha -\acute \alpha)] \|^2_{0, \sigma, C}.
 		 \label{eq:thm17uvw}
 		\end{align}

 		Now we combine the estimates obtained from the $U$ and the $V$ solutions. Now
 		$b^\perp := b - (\theta \cdot b) \theta$ is perpendicular to $\theta$ (the radial direction) and has no component in the $t$ axis direction, so $b^\perp \cdot \nabla$ is in the span of the $\Omega_{lm}$. Further, $\Delta_S$ is a second order operator made up of $\Omega_{lm}$. That is, $\Delta_S = \f{1}{2r^2}\sum\limits_{l,m=1}^3\Omega_{lm}^2$.
 		Hence
 		\[
 		\|[(b^\perp \cdot \nabla)(\alpha - \alphaacu), \Delta_S(\alpha - \alphaacu)]\|_{0,\sigma,C} 
 		\ \cleq  \ 
 		\sum_{l,m=1}^3 \| \Omega_{lm} (\alpha - \alphaacu) \|_{1, \sigma, C},
 		\]
 		so using \eqref{eq:thm17abbb} in \eqref{eq:thm17uvw} we obtain
 		\begin{align}
 		\| (\dd_t - \theta\cdot \nabla)(\overline a + \theta \cdot \overline b)\|^2_{0, \sigma, C}
 		\,  \cleq \, \frac{1}{\sigma} & \|[\abar, \bbar, D \abar, D \bbar]\|^2_{0,\sigma,Q} + \|[\abar, \bbar] \|^2_{0,\sigma,C}
 		+ \|\ubar\|^2_{2,\sigma,H} 
 		\nn\\
 		& \qquad + \| [\ubar_t, \vbar]\|^2_{1, \sigma, H} + \|[\ubar_{tt}, \vbar_t]\|^2_{0,\sigma, H}.
 		\label{eq:thm17xyz}
 		\end{align}
 		
 		Now $\theta \cdot \nabla$ represent the radial derivative $\pa_r$ in $\real^3$. Further
 		$\pa_t - \pa_r, \pa_t + \pa_r, \Omega_{lm}$, $l,m=1,2,3$, span the tangent space to $\real^4$. 
 		Hence \eqref{eq:thm17pqr} and \eqref{eq:thm17xyz} give us
 		\begin{align}
 		\| D (\abar + \theta \cdot \bbar)\|^2_{0,\sigma,C}
 		\, & \cleq \, \frac{1}{\sigma} \|[\abar, \bbar, D \abar, D \bbar]\|^2_{0,\sigma,Q} 
 		+ \|[\abar, \bbar] \|^2_{0,\sigma,C}
 		+ \|\ubar\|^2_{2,\sigma,H} 
 		\nn\\
 		& \qquad + \| [\ubar_t, \vbar]\|^2_{1, \sigma, H} + \|[\ubar_{tt}, \vbar_t]\|^2_{0,\sigma, H}.
 		\label{eq:thm17xab}
 		\end{align}
 		Also, from \eqref{eq:thm17pqr}, we can extract
 		\begin{align}
 			\| \abar + \theta \cdot \bbar \|^2_{0,\sigma,C}
 		\,  \cleq \, \frac{1}{\sigma} \|[\abar, \bbar, D \abar, D \bbar]\|^2_{0,\sigma,Q} 
 		+ \|\ubar\|^2_{2,\sigma,H} 
 		 + \| [\ubar_t, \vbar]\|^2_{1, \sigma, H} + \|[\ubar_{tt}, \vbar_t]\|^2_{0,\sigma, H}.
 		\label{eq:thm17lmn}
 		\end{align}
 		Note that $\|[\abar, \bbar]\|_{0,\sigma,C}$ term is absent from the RHS of \eqref{eq:thm17lmn}. 
 		This will be significant.
 		
 		We integrate the last two inequalities w.r.t $\tau$, over the interval $[\tau_{min}, \tau_{max}]$. Using integral relations similar to those used in the proof of Theorem \ref{thm:qstab}, we obtain
 		\begin{align}
 		 \|D (\abar + \theta \cdot \bbar) \|^2_{0, \sigma, \real^3 \times [0,T]}
 		 \, & \cleq \, \frac{1}{\sigma} \| [\abar, \bbar, D \abar, D \bbar]\|^{{2}}_{0,\sigma, \real^3 \times [0,T]}
 		 +  \| [\abar, \bbar]\|^{{2}}_{0,\sigma, \real^3 \times [0,T]}
 		 \nn
 		 \\
 		 & ~~ + \int_{\tau_{min}}^{\tau_{max}} \left ( \|\ubar\|^2_{2,\sigma,H} 
 		 + \| [\ubar_t, \vbar]\|^2_{1, \sigma, H} + \|[\ubar_{tt}, \vbar_t]\|^2_{0,\sigma, H} \right ) 
 		 \, d \tau 
 		 \label{eq:thm17xxx}
 		\end{align}
 		and
 		\begin{align}
 		 \|\abar + \theta \cdot \bbar \|^2_{0, \sigma, \real^3 \times [0,T]}
 		 \, & \cleq \, \frac{1}{\sigma} \| [\abar, \bbar, D \abar, 
 		 D \bbar ]\|^{{2}}_{0,\sigma, \real^3 \times [0,T]}
 		 \nn
 		 \\
  & \qquad + \int_{\tau_{min}}^{\tau_{max}} \left ( \|\ubar\|^2_{2,\sigma,H} 
 		 + \| [\ubar_t, \vbar]\|^2_{1, \sigma, H} + \|[\ubar_{tt}, \vbar_t]\|^2_{0,\sigma, H} \right ) \, d \tau.
 		 \label{eq:thm17yyy}
 		\end{align}
 		
 	We have (\ref{eq:thm17yyy}) for $\xi= \xi_i$, $i=1,2,3,4$ and the locations
 	$\xi_1, \cdots, \xi_4$ are diverse with respect to $(T+1)B$, hence
 	\begin{align}
 		 \|[\abar, \bbar] \|^2_{0, \sigma, \real^3 \times [0,T]}
 		 \, & \cleq \, \frac{1}{\sigma} \| [\abar, \bbar, D \abar, 
 		 D \bbar]\|^{{2}}_{0,\sigma, \real^3 \times [0,T]}
 		 \nn
 		 \\
  & \qquad + \int_{\tau_{min}}^{\tau_{max}} 
  \sum_{i=1}^4 \left ( \|\ubar\|^2_{2,\sigma,H} 
 		 + \| [\ubar_t, \vbar]\|^2_{1, \sigma, H} + \|[\ubar_{tt}, \vbar_t]\|^2_{0,\sigma, H} \right ) \, d \tau.
 		 \label{eq:thm17zzz}
 		\end{align}
 	Using this in \eqref{eq:thm17xxx} we obtain (for each $\xi_i$)
 	\begin{align}
 		  \|[\abar , \bbar]  \|^2_{0, \sigma, \real^3 \times [0,T]} &
 		 + \|D (\abar + \theta \cdot \bbar) \|^2_{0, \sigma, \real^3 \times [0,T]}
 		  \cleq \, \frac{1}{\sigma} \| [\abar, \bbar, D \abar, D \bbar]\|^{{2}}_{0,\sigma, \real^3 \times [0,T]}
 		  \nn
 		  \\
  & + \int_{\tau_{min}}^{\tau_{max}} 
  \sum_{i=1}^4 \left ( \|\ubar\|^2_{2,\sigma,H} 
 		 + \| [\ubar_t, \vbar]\|^2_{1, \sigma, H} + \|[\ubar_{tt}, \vbar_t]\|^2_{0,\sigma, H} \right ) \, d \tau.
 		 \label{eq:thm17aaa}
 		\end{align}
 	Now 
 	\[
 	|D \abar + \theta \cdot D \bbar| \cleq | [\abar, \bbar] | + |D (\abar + \theta \cdot \bbar)| 
 	\]
 	and the locations $\xi_1, \cdots, \xi_4$ are diverse with respect to $(T+1)B$, hence
 	\begin{align*}
 		  \|[\abar , \bbar, D \abar, D \bbar]  \|^2_{0, \sigma, \real^3 \times [0,T]} 
 		\, & \cleq \, \frac{1}{\sigma} \| [\abar, \bbar, D \abar, D \bbar]\|^{{2}}_{0,\sigma, 
 		\real^3 \times [0,T]}
 		 \nn
 		 \\
 		 & \qquad + \int_{\tau_{min}}^{\tau_{max}}  \sum_{i=1}^4 \left ( \|\ubar\|^2_{2,\sigma,H} 
 		 + \| [\ubar_t, \vbar]\|^2_{1, \sigma, H} + \|[\ubar_{tt}, \vbar_t]\|^2_{0,\sigma, H} \right ) 
 		 \, \d \tau.
 		\end{align*}
 		so taking $\sigma$ large enough, we obtain
 		\[
 		 \|[\abar , \bbar, D \abar, D \bbar]  \|^2_{0, \sigma, \real^3 \times [0,T]} 
 		\,  \cleq \,
 		  \int_{\tau_{min}}^{\tau_{max}}  \sum_{i=1}^4 \left ( \|\ubar\|^2_{2,\sigma,H} 
 		 + \| [\ubar_t, \vbar]\|^2_{1, \sigma, H} + \|[\ubar_{tt}, \vbar_t]\|^2_{0,\sigma, H} \right ) 
 		 \, \d \tau.
 		 \]
 	Now $\cbar = \abar_t - \nabla \cdot \bbar$, so the proof of Theorem \ref{thm:abcstab} is complete.
	
	\vspace{0.4in}
 		

{
\begin{lemma}\label{decomposition}
	For $\alpha$ defined in \eqref{eq:alphadef}, we have 
	\begin{align*}
		\mc{L} \alpha = \alpha \left(\dd_t - \theta \cdot \nabla \right) (a + \theta \cdot b) - 
		\Delta_S \alpha + 2 b^\perp \cdot \nabla \alpha 
		- ( |b^\perp|^2 + 2 r^{-1} \theta \cdot b) \alpha,
	\end{align*}
    where $\Delta_S$ denotes the spherical Laplacian and $b^\perp = b - (b\cdot\theta)\theta$. 
\end{lemma}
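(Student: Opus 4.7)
\textbf{Proof proposal for Lemma \ref{decomposition}.} The plan is to compute $\mc{L}\alpha$ directly, exploiting the transport equation \eqref{eq:Talpha} and the fact that in three space dimensions the Laplacian decomposes nicely in spherical coordinates centered at $\xi$. Throughout, write $g := a + \theta\cdot b$, use $\pa_r := \theta\cdot\nabla$, and recall from the calculations below \eqref{transport eqn} that $(\pa_t + \pa_r)\alpha = (g - r^{-1})\alpha$. Also note the coefficient convention: in the setting where the lemma is applied (the modified coefficients in the proof of Theorem \ref{thm:abcstab}), the operator reduces to $\mc{L} = \Box - 2a\pa_t + 2b\cdot\nabla + (a^2 - |b|^2)$ because the gauge $\psi$ has been used to arrange $c - a_t + \nabla\cdot b = 0$.

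First, write $\Box\alpha = (\pa_t^2 - \pa_r^2)\alpha - \tfrac{2}{r}\pa_r\alpha - \Delta_S \alpha$, which is the standard decomposition of the 3D Euclidean Laplacian in spherical coordinates (this is exactly where the three-dimensionality is used). Factor $\pa_t^2 - \pa_r^2 = (\pa_t - \pa_r)(\pa_t + \pa_r)$ and apply the transport identity to obtain
\[
(\pa_t^2 - \pa_r^2)\alpha = (\pa_t - \pa_r)\bigl[(g - r^{-1})\alpha\bigr] = \bigl(g_t - g_r - r^{-2}\bigr)\alpha + (g - r^{-1})(\pa_t - \pa_r)\alpha.
\]
Using $(\pa_t + \pa_r)\alpha = (g-r^{-1})\alpha$, solve for $\pa_r \alpha = \tfrac{1}{2}\bigl[(g-r^{-1})\alpha - (\pa_t - \pa_r)\alpha\bigr]$ and $\pa_t \alpha = \tfrac{1}{2}\bigl[(g-r^{-1})\alpha + (\pa_t - \pa_r)\alpha\bigr]$. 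Substituting these expressions into $-\tfrac{2}{r}\pa_r\alpha$ and simplifying collapses the $r^{-2}$ terms, yielding
\[
\Box\alpha = (g_t - g_r - g/r)\alpha + g\,(\pa_t - \pa_r)\alpha - \Delta_S \alpha.
\]

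Next, split $b\cdot\nabla = (\theta\cdot b)\pa_r + b^\perp\cdot\nabla$ where $b^\perp = b - (\theta\cdot b)\theta$ satisfies $b^\perp \cdot \theta = 0$, so $2b\cdot\nabla\alpha = (\theta\cdot b)\bigl[(g-r^{-1})\alpha - (\pa_t - \pa_r)\alpha\bigr] + 2 b^\perp \cdot \nabla \alpha$. Likewise $-2a\pa_t\alpha = -a(g-r^{-1})\alpha - a(\pa_t - \pa_r)\alpha$. Adding these to $\Box\alpha$ and the term $(a^2 - |b|^2)\alpha$, the coefficient multiplying the off-characteristic derivative $(\pa_t - \pa_r)\alpha$ becomes $g - a - \theta\cdot b$, which vanishes by the definition of $g$. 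This cancellation is the whole reason the identity is clean, and it is exactly what makes $\alpha$ a natural ansatz.

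The remaining zeroth-order terms aggregate to $(g_t - g_r)\alpha - \Delta_S\alpha + 2b^\perp\cdot\nabla\alpha + R\,\alpha$, where
\[
R = -g/r + (\theta\cdot b - a)(g - r^{-1}) + a^2 - |b|^2.
\]
Expanding, $(\theta\cdot b - a)(a + \theta\cdot b) = (\theta\cdot b)^2 - a^2$, and $(\theta\cdot b)^2 - |b|^2 = -|b^\perp|^2$, while the $1/r$ contributions collect to $-2(\theta\cdot b)/r$. Finally $g_t - g_r = (\pa_t - \theta\cdot\nabla)(a + \theta\cdot b)$ (using $\theta\cdot\nabla\theta = 0$, so $\pa_r(\theta\cdot b) = \theta\cdot b_r$). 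This produces the asserted identity. The only real work is the bookkeeping in the substitution step; there is no conceptual obstacle, since the characteristic-direction cancellation guaranteed by the definition of $\alpha$ removes all first-order derivatives of $\alpha$ transverse to the cone.
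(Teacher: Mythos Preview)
Your proof is correct and follows essentially the same approach as the paper: decompose the Laplacian in spherical coordinates, factor $\pa_t^2 - \pa_r^2$, and use the transport equation to eliminate the characteristic-direction derivatives. The only organizational difference is that the paper works with $r\alpha$ (for which the transport equation reads $(\pa_t + \pa_r)(r\alpha) = (a+\theta\cdot b)\, r\alpha$, without the $r^{-1}$ term), which slightly streamlines the bookkeeping but is otherwise equivalent to your direct computation with $\alpha$.
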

\begin{proof}
	For convenience, we consider $\xi=0$. The general case follows by translation. 
	We have
	\begin{align}
		\nn \mc{L}\alpha & = \left(\dd_t^2 - \dd_r^2 - \f{2}{r} \dd_r - \Delta_S - 2 a \dd_t + 2 b \cdot \nabla + a^2 - |b|^2\right) \alpha \\
		\nn  & = \f{1}{r} \left(r\dd_t^2 - r\dd_r^2 - 2\dd_r \right) \alpha - \left(\Delta_S + 2 a \dd_t - 2 b \cdot \nabla - a^2 + |b|^2\right) \alpha \\
		\nn  & = \f{1}{r} \left(\dd_t^2 - \dd_r^2\right) (r\alpha) - \left(\Delta_S + 2 a \dd_t - 2 b \cdot \nabla - a^2 + |b|^2\right) \alpha \\
		\label{using trans eqn in decomp 1}  & = \f{1}{r} \left(\dd_t - \dd_r\right)\left(\left(\dd_t + \dd_r\right) r\alpha \right) - \left(\Delta_S + 2 a \dd_t - 2 b \cdot \nabla - a^2 + |b|^2\right) \alpha \\
		\nn  & = \f{1}{r} \left(\dd_t - \dd_r\right) \left( r \alpha (a + \theta \cdot b)\right) - \left(\Delta_S + 2 a \dd_t - 2 b \cdot \nabla - a^2 + |b|^2\right) \alpha \\
		\nn  & = \alpha (\dd_t - \dd_r) (a + \theta \cdot b) - \f{(a + \theta \cdot b) \alpha}{r} + (a + \theta \cdot b) (\alpha_t - \alpha_r) 
		\\
		\nn
		& \qquad - \left(\Delta_S + 2 a \dd_t - 2 b \cdot \nabla - a^2 + |b|^2\right) \alpha;
	\end{align}
	we used \eqref{transport eqn} in \eqref{using trans eqn in decomp 1}. As a consequence, we have
	\begin{align}
		\mc{L}\alpha  &- \alpha  (\dd_t - \dd_r) (a + \theta \cdot b) + \Delta_S \alpha 
		\nn \\
		& =  - \f{(a + \theta \cdot b) \alpha}{r} + (a + \theta\cdot b) (\alpha_t - \alpha_r)  
		-2 a \alpha_t + 2( (\theta \cdot b) \theta + b^\perp) \cdot \nabla \alpha 
		+ \left( a^2 - |b|^2\right) \alpha 
		\nn \\
		& =  - \f{(a + \theta \cdot b) \alpha}{r} + (a + \theta\cdot b) (\alpha_t - \alpha_r)  
		-2 a \alpha_t + 2 (\theta \cdot b) \alpha_r + 2b^\perp \cdot \nabla \alpha 
		+ \left( a^2 - |b|^2\right) \alpha 
		\nn \\
		& = - \f{(a + \theta \cdot b) \alpha}{r} - (a - \theta\cdot b) (\alpha_t + \alpha_r) 
		+ 2 b^\perp \cdot \nabla \alpha + \left( a^2 - |b|^2\right) \alpha 
		\label{eq:thm17ppp}\\
		& = - \f{(a + \theta \cdot b) \alpha}{r} - (a - \theta\cdot b) (a + \theta \cdot b 
		- r^{-1}) \alpha + 2 b^\perp \cdot \nabla \alpha + \left( a^2 - |b|^2\right) \alpha 
		\nn \\
		& = 2 b^\perp \cdot \nabla \alpha - 2 r^{-1} (\theta \cdot b) \alpha 
		- \left(a^2 - (\theta \cdot b)^2\right) \alpha + \left( a^2- |b|^2\right) \alpha 
		\nn \\
		& = 2 b^\perp \cdot \nabla \alpha - 2 r^{-1} (\theta \cdot b) \alpha 
		- |b^{\perp}|^2 \alpha,
		\nn
	\end{align}
	where we used \eqref{transport eqn} again in \eqref{eq:thm17ppp}.
\end{proof}	
}
 		

\section{The forward problems}\label{Wellposedness}

We give proofs of Propositions \ref{Heaviside} and \ref{Delta} using the standard progressing wave expansion method; 
one has to go through the computations to be certain that everything works, particularly in a cylindrical neighborhood
of $\{\xi\} \times \real$. 
It is enough to prove the proposition when $\tau=0$, $\xi=0$ since the general case follows from a translation argument. So $a,b,q$ are compactly supported smooth functions on $\real^3 \times \real$ which are zero on
$B_\ep \times \real$ for some $\ep>0$. Here $B_\ep$ is the origin centered open ball of radius $\ep$. Also, we write
$U(x,t; 0,0), V(x,t;0,0), \alpha(x,t;0)$ as $U(x,t), V(x,t), \alpha(x,t)$.

The uniqueness of the distributional solution follows from the proof of uniqueness for Proposition \ref{prop:infivp}. It remains to prove the existence and the structure of $U, V$.

We recall
\[
\mc{M} = -2a\dd_t + 2b\cdot\nabla + q , \qd \Tcal = \dd_t + \theta\cdot\nabla - (a+\theta\cdot b) + r^{-1}
\]
and 
\begin{align*}
\Lcal := (\pa_t -a)^2 - (\nabla -b)^2 + c 
= \Box - 2a\dd_t + 2b\cdot\nabla + q
= \Box + \Mcal
\end{align*}
Also, for $x \neq 0$ we define $r = |x|$ and $\theta = x/|x|$. We will need two observations, described next, in the proofs of Propositions \ref{Heaviside} and \ref{Delta}. 

For an arbitrary smooth function $h$ on $(\real^3 \setminus \{0\}) \times \real$ and an arbitrary distribution $F$ on $\real$, we claim
\begin{align}\label{gen expansion}
 \mc{L} \left(h(x,t) F(t - |x|)\right) = 2 \Tcal (h) F' (t- |x|) + \left(\mc{L}h \right) F(t - |x|),
 \qquad x \neq 0.
 \end{align}
 We give its brief derivation. For $x \neq 0$, we have
 		\begin{align*}
 		   (\dd_t - a)\left(h \, F(t - |x|) \right) & = h F'(t - |x|) 
 		   + F(t - |x|)(\dd_t - a)h, \\
 		   (\dd_t - a)^2\left(h \, F(t - |x|) \right) & = h F''(t - |x|) 
 		   + 2 F'(t - |x|)(\dd_t - a)h + F(t - |x|)(\dd_t - a)^2 h.
 		\end{align*}
  Similarly
 		 \begin{align*}
 		     (\nabla - b)& ( h(x, t) F(t - |x|)) 
 		      = - \theta h F'(t - |x|) + F(t -|x|) (\nabla - b) h, \\
 		     (\nabla - b)^2 & ( h(x, t) F(t - |x|) ) 
 		     \\
 		     & = - (\nabla - b) \cdot \left( \theta h F'(t - |x|) \right)
 		      + (\nabla - b) \cdot \left( F(t - |x|) (\nabla - b) h \right) \\
 		     & = h F''(t - |x|) -  F'(t - |x|) (\nabla - b)\cdot  
 		     \left(\theta h\right)  \\
 		     & \ \   - \theta \cdot \left((\nabla -b) h\right) F'(t - |x|) 
 		     + \left( \left(\nabla - b\right)^2 h \right) F(t - |x|)\\
 		     & = h F''(t - |x|)  - 2 \left(\theta \cdot (\nabla - b) h + r^{-1} h  \right) F'(t - |x|)
 		  + \left( \left(\nabla - b\right)^2 h \right) F(t - |x|)
 		 \end{align*} 
 		 Hence (\ref{gen expansion}) follows.

We will also need the solution of the transport equation
\beqn
(\Tcal f)(r \theta, t_0 + r) = g(r \theta, t_0 + r), \qquad r \neq 0.
\label{eq:Tfg}
\eeqn
We summarise the claim as the following lemma.
\begin{lemma}\label{lemma:Tfg}
Suppose $g(x,t)$ is a smooth function on $\real^3 \times \real$ which is zero on $B_\ep \times \real$ and the 
restriction of $g$ to the region $t \leq T$ is compactly supported for each $T$. Then \eqref{eq:Tfg} has a solution
given by
\beqn
f(r \theta, r + t_0) = \alpha(r \theta, t_0 + r) \int_0^r \frac{g(s \theta, t_0 + s)}{ \alpha(s\theta, t_0+s)} \, ds
\label{eq:Tfgsol}
\eeqn
with $f$ smooth on $\real^3 \times \real$ and zero on $B_\ep \times \real$. Further, the restriction of $f$ to
$t \leq T$ is compactly supported with 
\beqn
\| f \|_{C^p( \real^3 \times (-\infty, T])}
\leq C \| g \|_{C^p( \real^3 \times (-\infty, T])},
\label{eq:fgpest}
\eeqn
with the constant $C$ determined by $\ep$, $T$ and $\|[a,b]\|_{C^p(\real^3 \times (-\infty,T])}$.
\end{lemma}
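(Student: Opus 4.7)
The plan is to reduce the first-order transport equation \eqref{eq:Tfg} to an ODE along the rays $s \mapsto (s\theta, t_0 + s)$ by exploiting that $\Tcal \alpha = 0$ on $(\real^3 \setminus \{0\}) \times \real$, which was observed in \eqref{eq:Talpha}. A direct Leibniz computation gives
\[
\Tcal(\alpha h) = \alpha \, (\dd_t + \theta \cdot \nabla) h + h \, \Tcal \alpha = \alpha \, (\dd_t + \theta \cdot \nabla) h
\]
for any smooth $h$, so the ansatz $f = \alpha h$ converts \eqref{eq:Tfg} into $(\dd_t + \theta \cdot \nabla) h = g/\alpha$. Since $\dd_t + \theta \cdot \nabla$ differentiates along $s \mapsto (s\theta, t_0 + s)$, this reduces to the ODE $\tfrac{d}{ds} h(s\theta, t_0 + s) = g(s\theta, t_0+s)/\alpha(s\theta, t_0+s)$, and integrating from $s=0$ with the normalization $h(0,t_0)=0$ (chosen so that $f$ vanishes near the origin) produces the formula \eqref{eq:Tfgsol}. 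Verification that this $f$ actually solves \eqref{eq:Tfg} is then just the fundamental theorem of calculus combined with the displayed identity above.

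Next I would check the regularity and support properties. Since $g \equiv 0$ on $B_\ep \times \real$, the integrand in \eqref{eq:Tfgsol} is identically zero for $s \in [0,\ep)$, and the leading $\alpha$ factor is then multiplied by zero, so $f \equiv 0$ on $B_\ep \times \real$. This simultaneously yields the vanishing near the origin and removes the smoothness concern at $x = 0$. Away from $B_\ep$, $\alpha$ is smooth and strictly positive by \eqref{eq:alphadef}, and the change of variables $(x,t) = (r\theta, t_0+r)$ is smooth, so the formula directly produces a smooth function on $(\real^3 \setminus B_\ep) \times \real$.

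For the compact-support claim on $\{t \le T\}$, note that $\mathrm{supp}(g) \cap (\real^3 \times (-\infty, T])$ is compact, contained in some $\Bbar_R \times [T_0, T]$. A point $(x,t)$ with $t \leq T$ can lie in $\mathrm{supp}(f)$ only if some $s \in [0, |x|]$ satisfies $(s\theta, t - |x| + s) \in \mathrm{supp}(g)$; the resulting constraints $s \leq R$ and $t - |x| + s \geq T_0$ then force $|x| \leq R + T - T_0$, giving the required bound.

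Finally, the $C^p$ estimate \eqref{eq:fgpest} follows by differentiating \eqref{eq:Tfgsol} under the integral sign: each derivative lands on $\alpha$, on $1/\alpha$, or on $g$. The derivatives of $\alpha$ (and of $1/\alpha$) are controlled using the explicit formula \eqref{eq:alphadef} by $\|[a,b]\|_{C^p(\real^3 \times (-\infty,T])}$, while a uniform lower bound on $\alpha$ over the bounded $x$-region identified above (together with $\ep$) makes $1/\alpha$ and its derivatives bounded. I expect the only genuine nuisance is bookkeeping: ensuring smoothness across the ray-parametrization singularity at $x = 0$ (resolved by the support hypothesis on $g$) and tracking polynomial dependence of the constant on $\|[a,b]\|_{C^p}$ when derivatives are distributed via the multi-index Leibniz rule.
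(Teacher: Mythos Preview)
Your proof is correct and follows essentially the same approach as the paper: both reduce \eqref{eq:Tfg} to an ODE along the rays $s \mapsto (s\theta, t_0+s)$ and arrive at the identical formula \eqref{eq:Tfgsol}, then check vanishing on $B_\ep \times \real$, compact support, and the $C^p$ bound in the same way. The only cosmetic difference is that you obtain the ODE via the ansatz $f = \alpha h$ and the identity $\Tcal(\alpha h) = \alpha(\partial_t + \theta\cdot\nabla)h$ (using $\Tcal\alpha = 0$), whereas the paper multiplies through by $r$ and identifies $1/(r\alpha)$ as an integrating factor---but since $f/\alpha = (rf)\cdot(1/(r\alpha))$, these are the same computation written two ways.
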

We give the short proof of the lemma. For any smooth function $f(x,t)$ and any $t_0$ we have
\[
\frac{d}{dr} ( r f(r \theta, t_0 + r) ) = [ r ( f_t + \theta \cdot \nabla f) + f](r \theta, t_0 + r),
\qquad r \neq 0,
\]
hence
\beqn
r (\Tcal f)(r \theta, t_0 + r) = \frac{d}{dr} (r f(r \theta, t_0 + r) ) 
- (a + \theta \cdot b)( r f(r \theta, t_0 + r) ),
\qquad r \neq 0.
\label{eq:rTf}
\eeqn
Therefore \eqref{eq:rTf} may be rewritten as the ODE
\[
\frac{d}{dr} (r f(r \theta, t_0 + r) ) 
- [(a + \theta \cdot b)( r f)](r \theta, t_0 + r) ) = r g(r \theta, t_0 + r), \qquad r \neq 0.
\]
An integrating factor for this ODE is (note $a,b$ are zero in $B_\ep \times \real$)
\begin{align*}
\exp \left ( - \int_0^r (a+ \theta \cdot b)(s \theta, t_0+s) \, ds \right )
& = \exp \left ( - \int_0^r (a+ \theta \cdot b)((r-s) \theta, t_0+r-s) \, ds \right )
\\
& = \frac{1}{r \alpha(r \theta, t_0+r)}
\end{align*}
so the ODE may be rewritten as
\[
\frac{d}{dr} \left ( \frac{f(r \theta, t_0 + r)}{ \alpha(r \theta, t_0+r)} \right ) = 
\frac{g(r \theta, t_0 + r)}{ \alpha(r \theta, t_0+r)},
\qquad r \neq 0.
\]
Hence, one solution of \eqref{eq:Tfg} is
\[
f(r \theta, t_0 + r) = \alpha(r \theta, t_0 + r) \int_0^r \frac{g(s \theta, t_0 + s)}{ \alpha(s\theta, t_0+s)} \, ds, 
\qquad r \neq 0;
\]
note that $f(x,t)$, is zero in $B_\ep \times \real$ and smooth on $\real^3 \times \real$. Further
$f$ is compactly supported when restricted to $t \leq T$ and
\[
\| f \|_{C^p( \real^3 \times (-\infty, T])}
\leq C \| g \|_{C^p( \real^3 \times (-\infty, T])},
\]
with the constant $C$ determined by $\ep$, $T$ and $\|[a,b]\|_{C^p(\real^3 \times (-\infty,T])}$.

\subsection{Proof of Proposition \ref{Heaviside}}
\label{subsec:Heaviside}
    
   One may verify with a standard argument that
 		\begin{align}\label{fund soln}
 		    \square \left( \f{H(t - |x|)}{|x|} \right) = 4 \pi H(t) \delta(x).
 		\end{align}
 	We seek a solution $U(x,t)$, of the IVP (\ref{IVP for Heaviside 1}), (\ref{IVP for Heaviside 2}), of the form
 		\begin{align*}
 		    U(x, t) = \f{H(t - |x |)}{|x|} + u(x, t) H(t - |x|),
 		\end{align*}
 		with $u(x,t)$ a smooth function in the region $t \geq |x|$, satisfying
 		$u(x,t)=0$ in a neighborhood of $(x=0, t=0)$. 
 		
 		Clearly such a $U$ satisfies (\ref{IVP for Heaviside 2}). So we need to find such a $u(x,t)$ so that
 \[
 \Lcal (u(x,t) H(t -|x|) ) = - \Mcal \left ( \frac{H(t-|x|)}{|x|} \right ).
 \label{eq:LuM}
 \]
 %
Since $\Mcal=0$, $a=0, b=0$ on $B_\ep \times \real$, we have
\begin{align*}
\Mcal \left ( \frac{ H(t-|x|)}{|x|} \right )
& = \Mcal ( |x|^{-1} ) H(t-|x|) - 2 |x|^{-1} \, (a + \theta \cdot b) \, \delta(t-|x|),
\end{align*}
hence we want
\beqn
\Lcal [ u(x,t) H(t-|x|) ] =  2 |x|^{-1} (a+  \theta \cdot b) \,  \delta( t - |x|)
- \Mcal( |x|^{-1}) \, H(t-|x|).
\label{eq:Utemp1}
\eeqn

For $N$ large enough to be chosen later, we seek $u(x,t)H(t-|x|)$ in the form
 	\begin{align}\label{solution:u}
 	    u(x,t)H(t-|x|) =  a_0(x,t) H(t-|x|) + \sum_{k=1}^N a_k(x,t) \frac{(t - |x|)_+^k}{k!} + S_N(x,t)
 	\end{align}
 	for suitably chosen smooth functions $a_k$ which will be zero on $B_\ep \times \real$, and a $S_N$ which will be 
 	highly differentiable as $N$ increases, zero in a neighborhood of $(0,0)$ and supported in $t \geq |x|$. With such an expansion we will define
 	\beqn
 	u(x,t) := \sum_{k=0}^N a_k(x,t) \frac{(t - |x|)_+^k}{k!} + S_N(x,t).
 	\label{eq:udefSN}
 	\eeqn
 	
We construct the $a_k$ and $S_N$ so that \eqref{eq:Utemp1} holds. Since the $a_k$ are to be zero on $B_\ep \times \real$,
it is clear that
\[
\Lcal \left ( a_0(x,t) H(t-|x|) + \sum_{k=1}^N a_k(x,t) \frac{(t - |x|)_+^k}{k!} \right ) = 0,
\qquad \text{on } B_\ep \times \real.
\]
Further, for $x \neq 0$, noting that
 	\[
H(s) = s_+^0, \qquad \frac{d}{ds} \delta(s) = \delta'(s),
 \qquad \frac{d}{ds} H(s) = \delta(s),
 \qquad \frac{d}{ds} \frac{s_+^k}{k!} = \frac{s_+^{k-1}}{(k-1)!}, \qquad k \geq 1,
 	\]
and	using \eqref{gen expansion} we obtain (for $x \neq 0$)
 \begin{align*}
 \Lcal & \left ( a_0(x,t) H(t-|x|) + \sum_{k=1}^N a_k(x,t) \frac{(t - |x|)_+^k}{k!} \right )
 \\
&= 2 (\Tcal a_0) \delta(t-|x|) + (\Lcal a_0) H(t-|x|)
+ \sum_{k=1}^N 2 (\Tcal a_k) \frac{(t-|x|)_+^{k-1}}{(k-1)!} 
 	    + (\Lcal a_k) \frac{(t-|x|)_+^k}{k!} 
 	    \\
& = 2 (\Tcal a_0) \delta(t-|x|) 
+ \sum_{k=1}^N (2\mc{T} a_k + \Lcal a_{k-1}) \frac{(t - |x|)_+^{k-1}}{(k-1)!} + 
\Lcal a_N \frac{(t - |x|)_+^N}{N!}.
\end{align*}
 Keeping in mind \eqref{eq:Utemp1}, we choose $a_0(x,t)$ and $a_1(x,t)$ so that
 \begin{align}
 2 \Tcal a_0 =2 |x|^{-1} (a+ \theta \cdot b),  & \qquad \text{on } x \neq 0
 \label{eq:a0de}
 \\
 2 \Tcal a_1 + \Lcal a_0 = - \Mcal (|x|^{-1}), & \qquad \text{on } ~ x \neq 0,
 \label{eq:a1de}
 \end{align}
 and, for $2 \leq k \leq N$, we choose $a_k$ so that
 \begin{align}
 2 \mc{T} a_k + \mc{L} (a_{k-1}) = 0, \qquad \text{on } ~ x \neq 0.
 \label{eq:akde}
 \end{align}

 Assuming for the moment that we have constructed smooth $a_k$ satisfying these equations with
 $a_k$ zero on $B_\ep \times \real$, keeping in mind \eqref{eq:Utemp1}, we need to find $S_N$ which solves
 \begin{align}
\Lcal S_N = - \left(\Lcal a_N \right)\, \frac{(t-|x|)_+^N}{N!}, & \qquad \text{on } \real^3 \times \real,
\label{eq:SNde}
\\
S_N = 0, & \qquad \text{on } t<0.
\label{eq:SNic}
 \end{align}
Since $a_N$ is a smooth function that is zero in $B_\ep \times \real$, 
the function $\Lcal a_N \frac{(t-|x|)_+^N}{N!}$ is in $C^{N-1}(\real^3 \times \real)$, zero in a neighborhood of
$(0,0)$ and supported in the region $t \geq |x|$. Hence, if $N>5$, then by Proposition \ref{prop:infivp} with 
$m=N-1$ the IVP \eqref{eq:SNde}, \eqref{eq:SNic} has a unique distributional solution which is in $C^{N-3}(\real^3 \times \real)$. Further
\beqn
\| S_N \|_{C^{N-3}(\real^3 \times (-\infty, T])} \leq C \| \Lcal a_N \|_{C^{N-1}(\real^3 \times (-\infty,T])}.
\label{eq:SNest},
\eeqn
with $C$ determined by $T$ and $\|[a,b,q]\|_{C^{N-1}(\real^3 \times (-\infty,T])}$.

Hence $S_N$ is at least $C^2$, so by a 
standard energy estimate argument, one can show that $S_N$ is supported in the region $t \geq |x|$ and $S_N=0$ in a neighborhood of $(0,0)$. Hence, if we take $N>5$ then the $u$ defined by \eqref{eq:udefSN}
is in $C^{N-3}(\real^3 \times \real)$, zero in a neighborhood of $(0,0)$ and $u(x,t) H(t-|x|)$ is
the (unique) distributional solution of the IVP \eqref{IVP for Heaviside 1A}, \eqref{IVP for Heaviside 2B}. Now
$N$ was arbitrary and $u$ is uniquely determined on $t \geq |x|$, hence $u$ is smooth on $t \geq |x|$. 
Since \eqref{eq:Utemp1} holds, we see that \eqref{ch BVP for Heaviside 1} holds.

If remains to prove that there are smooth $a_k(x,t)$ which satisfy \eqref{eq:a0de}, \eqref{eq:a1de}, \eqref{eq:akde}, 
are zero in $B_\ep \times \real$, that for the $u$ defined by \eqref{eq:udefSN} 
the relation \eqref{ch BVP for Heaviside 2} holds, and we have the estimate on $\|u\|_{C^3(Q_{0,0})}$ claimed in 
Proposition \ref{Heaviside}.
 	
Since the RHS \eqref{eq:a0de} is smooth, compactly supported and zero on 
$B_\ep \times \real$, from Lemma \ref{lemma:Tfg}, we can construct a smooth $a_0$ satisfying 
\eqref{eq:a0de}, which is  zero on $B_\ep \times \real$ and its restriction to $t \leq T$ is compactly supported. Further
\[
\| a_0\|_{C^p( \real^3 \times (-\infty, T])}
\leq C
\]
with $C$ dependent only on $\ep$, $T$ and $\|[a,b]\|_{C^p(\real^3 \times (-\infty,T])}$.
Next, rewriting \eqref{eq:a1de} as
\[
\Tcal a_1 = - \frac{1}{2} ( \Lcal a_0 + \Mcal (|x|^{-1}) ), \qquad x \neq 0,
\]
again,  from Lemma \ref{lemma:Tfg}, there is a smooth solution $a_1$ of \eqref{eq:a1de}, which is zero on 
$B_\ep \times \real$ and its restriction to $t \leq T$ is compactly supported. Further, using the estimate on $a_0$
\begin{align*}
\| a_1 \|_{C^p( \real^3 \times (-\infty, T])}
& \leq C \| \Lcal a_0 + \Mcal (|x|^{-1}) \|_{C^p( \real^3 \times (-\infty, T])}
\\
& \leq C ( \|a_0\|_{C^{p+2}(\real^3 \times (-\infty, T])} + \|[a,b,q]\|_{C^p( \real^3 \times (-\infty, T])} )
\\
& \leq C_1
\end{align*}
where $C_1$ is a constant determined by $\ep$, $T$ and $\|[a,b,q]\|_{C^{p+2}( \real^3 \times (-\infty, T])}$. Next,
for any $2 \leq k \leq N$ we may write \eqref{eq:akde} as
\[
\Tcal a_k = - \frac{1}{2} \Lcal a_{k-1}, \qquad x \neq 0,
\]
hence, from Lemma \ref{lemma:Tfg}, there is a smooth solution $a_k$ of \eqref{eq:akde} which is zero on 
$B_\ep \times \real$ and its restriction to $t \leq T$ is compactly supported. Further
\[
\| a_k \|_{C^p( \real^3 \times (-\infty, T])}
\leq C \| \Lcal a_{k-1} \|_{C^p( \real^3 \times (-\infty, T])}
\leq C_2 \| a_{k-1} \|_{C^{p+2}( \real^3 \times (-\infty, T])}.
\]
with $C_2$ determined by $\ep, T$ and $\|[a,b,q]\|_{C^p( \real^3 \times (-\infty, T])}$. So by induction,
\[
\| a_k \|_{C^p( \real^3 \times (-\infty, T])} \leq C
\]
with $C$ determined by $\ep,T,k$ and $\|[a,b,q]\|_{C^{p+2k}( \real^3 \times (-\infty, T])}$ for $0 \leq k \leq N$.
In particular 
\[
\| a_N \|_{C^p( \real^3 \times (-\infty, T])} \leq C
\]
with $C$ determined by $\ep, T, N$ and $\|[a,b,q]\|_{C^{p+2N}( \real^3 \times (-\infty, T])}$.
We use this estimate in \eqref{eq:SNest} to obtain
\[
\|S_N\|_{C^{N-3}(\real^3 \times (-\infty,T])} \leq C \| a_N \|_{C^{N+1}(\real^3 \times (-\infty,T])}
\leq C_1
\]
with $C_1$ determined by $\ep, T$ and $\|[a,b,q]\|_{C^{3N+1}(\real^3 \times (-\infty,T]}$. In particular, taking
$N=6$ we have
\[
\|S_6\|_{C^3(\real^3 \times (-\infty,T])} \leq C
\]
where $C$ is determined by $\ep, T$ and $\|[a,b,q]\|_{C^{19}(\real^3 \times (-\infty,T]}$. 

From the uniqueness of the distributional solution, we know that the $u$ defined by \eqref{eq:udefSN} is independent of 
$N$ on the region $t \geq |x|$, hence using the estimates on $a_0, a_1, \cdots, a_6$ and $S_6$ we have
\[
\|u\|_{C^3(Q_{0,0})} \leq C
\]
with $C$ determined by $\ep,T$ and $\|[a,b,q]\|_{C^{19}(\real^3 \times (-\infty,T]}$. Here $Q_{0.0}$ is
$\{ (x,t): |x| \leq t \leq T\}$.

Finally, since $S_6$ is in $C^3$ and supported on $t \geq |x|$, we observe from \eqref{eq:udefSN} that
\[
u(x,|x|) = a_0(x,|x|), \qquad x \in \real^3.
\]
From Lemma \ref{lemma:Tfg} applied to \eqref{eq:a0de}, and taking $t_0=0$, we have
\begin{align*}
a_0(r \theta, r) & = \alpha(r \theta,r) \int_0^r \frac{(a + \theta \cdot b)(s \theta, s)}{s \alpha(s \theta, s)} \, ds.
\end{align*}
Now, from \eqref{eq:alphadef}
\begin{align*}
\frac{(a + \theta \cdot b)(s \theta, s)}{s \alpha(s \theta, s)} & =
(a + \theta \cdot b)(s \theta, s) \, \exp \left ( - \int_0^s (a+ \theta \cdot b)((s-\rho) \theta, s-\rho) \, 
d \rho \right )
\\
& = (a + \theta \cdot b)(s \theta, s) \, \exp \left ( - \int_0^s (a+ \theta \cdot b)(\rho \theta,\rho) \, 
d \rho \right )
\\
& = - \frac{d}{ds} \left [ \exp \left ( - \int_0^s (a+ \theta \cdot b)(\rho \theta, \rho) \, d \rho \right ) \right ]
\\
& = - \frac{d}{ds} \left ( \frac{1}{s\alpha(s \theta, s)} \right ).
\end{align*}
Noting that $ \lim_{r \to 0^+} ( r \alpha(r \theta, r) )=1$, we obtain
\begin{align*}
a_0(r \theta, r) = \alpha(r \theta, r) - r^{-1},
\end{align*}
proving \eqref{ch BVP for Heaviside 2}.

 		  
   \subsection{Proof of Proposition \ref{Delta}} 
  We have $\xi=0, \tau=0$ and $a,b,c$ are zero in $B_\ep \times \real$. We seek a 
   solution of the IVP \eqref{IVP for delta 1A} - \eqref{IVP for delta 2B} in the form 
\begin{align}
V(x, t) = |x|^{-1} \delta(t-|x|) + f(x, t) \delta (t  - |x|) + v(x, t) H(t - |x|),
\label{eq:Vfv}
\end{align}
with $v(x,t)$ a smooth function in $t \geq |x|$, $f(x,t)$ a smooth function on $\real^3 \times \real$,
$v(x,t)$ zero in a neighborhood of ($x \smeq 0,t \smeq 0$) and $f(x,t)$ zero on $B_\ep \times \real$.
Clearly such a $V(x,t)$ will satisfy the initial condition (\ref{IVP for delta 2B}), so we just need to 
find a solution of this form for \eqref{IVP for delta 1A}.

Since
\[
\Box (  |x|^{-1} \delta(t-|x|) ) = 4 \pi \delta(x) \delta(t)
\]
we have (note $\Mcal, a, b$ are zero in $B_\ep \times \real$)
\begin{align*}
\Lcal ( |x|^{-1} \delta(t-|x|) ) & = \Mcal ( |x|^{-1} \delta(t-|x|) )
= \Mcal(|x|^{-1}) \delta(t-|x|) - 2 |x|^{-1} (a+ \theta \cdot b) \, \delta'(t-|x|).
\end{align*}
Hence using \eqref{gen expansion} (we assume $f=0$ in $B_\ep \times \real$) we have
\begin{align*}
\Lcal & ( |x|^{-1} \delta(t-|x|) + f(x,t) \delta(t-|x|) ) - 4 \pi \delta(x) \delta(t)
\\
& \qquad =  2 [ \Tcal f -  |x|^{-1} (a+ \theta \cdot b)]  \, \delta'(t-|x|) 
+ [\Lcal f + \Mcal(|x|^{-1} )] \delta(t-|x|).
\end{align*}
Since $|x|^{-1} (a+ \theta \cdot b)$ is zero on $B_\ep \times \real$, from Lemma \ref{lemma:Tfg}, we can find a smooth
$f(x,t)$ which is zero on $B_\ep \times \real$ and $\Tcal f = |x|^{-1} (a+ \theta \cdot b)$. In fact, from Lemma \ref{lemma:Tfg}, we have
\begin{align*}
f(r \theta, r + t_0) & = \alpha(r \theta, t_0+r) \int_0^r \frac{(a + \theta \cdot b)(s \theta, t_0+s)}{s \alpha(s \theta, t_0+s)} \, ds
\\
& = \alpha(r \theta, t_0+r) - r^{-1},
\end{align*}
by the calculation at the end of subsection \ref{subsec:Heaviside}. Hence
\[
f(x,t) = \alpha(x,t) - |x|^{-1}.
\]
Note that, from \eqref{eq:alphadef}, we have
$\alpha(x,t) - |x|^{-1}=0$ in $B_\ep \times \real$.

So, keeping in mind \eqref{eq:Vfv} and \eqref{IVP for delta 1A}, we seek $v(x,t)$, a smooth function on $t \geq |x|$ which is zero near $(0,0)$ and
\beqn
\Lcal [ v(x,t) H(t-|x|) ]
= [ \Lcal (\alpha(x,t) - |x|^{-1}) + \Mcal (|x|^{-1} )] \, \delta(t-|x|).
\label{eq:Lva}
\eeqn
We seek $v(x,t) H(t-|x|)$ in the form
\beqn
v(x,t) H(t-|x|) = \sum_{k=0}^N b_k(x,t) \frac{ (t-|x|)_+^k}{k!} + R_N(x,t),
\label{eq:vaSN}
\eeqn
for some large $N$, for smooth functions $b_k$ which are zero in $B_\ep \times \real$ and for some regular enough 
function $R_N$ which is supported in $t \geq |x|$ and zero in a neighborhood of $(0,0)$. Then we will take
\beqn
v(x,t) = \sum_{k=0}^N b_k(x,t) \frac{ (t-|x|)^k}{k!} + R_N(x,t), \qquad t \geq |x|.
\label{eq:vdef}
\eeqn

As seen in the proof of Proposition \ref{Heaviside}, we have
\begin{align*}
 \Lcal & \left ( \sum_{k=0}^N b_k(x,t) \frac{(t - |x|)_+^k}{k!} \right )
 \\
& = 2 (\Tcal b_0) \delta(t-|x|) 
+ \sum_{k=1}^N (2\mc{T} b_k + \Lcal b_{k-1}) \frac{(t - |x|)_+^{k-1}}{(k-1)!} + 
(\Lcal b_N) \frac{(t - |x|)_+^N}{N!}.
\end{align*}
So keeping in mind \eqref{eq:Lva}, we seek $b_k$ such that
\begin{align}
\Tcal b_0 & = \Lcal (\alpha(x,t) - |x|^{-1}) + \Mcal (|x|^{-1} ),
\qquad x \neq 0,
\label{eq:b0de}
\\
\Tcal b_k & = - \frac{1}{2} \Lcal b_{k-1}, \qquad x \neq 0, ~~ 1 \leq k \leq N.
\label{eq:bkde}
\end{align}
Since the RHS of \eqref{eq:b0de} is smooth, zero on $B_\ep \times \real$ and its restriction to $t \leq T$ is compactly 
supported, Lemma \ref{lemma:Tfg} guarantees a smooth solution $b_0$ of \eqref{eq:b0de} with $b_0$ zero on 
$B_\ep \times \real$ and its restriction to $t \leq T$ compactly supported. Further
\[
\|b_0\|_{C^p(\real^3 \times (-\infty,T])} \leq C
\]
with $C$ determined by $\ep, T$ and $\|[a,b,q]\|_{C^{p+2}(\real^3 \times (-\infty,T])}$.
Applying Lemma \ref{lemma:Tfg} recursively
to \eqref{eq:bkde} we conclude that, for $1 \leq k \leq N$, there is a smooth solution $b_k$ of \eqref{eq:bkde}
with $b_k$ zero on $B_\ep \times \real$ and its restriction to $t \leq T$ compactly supported. Further
\[
\|b_k\|_{C^p(\real^3 \times (-\infty,T])} \leq C \|b_{k-1}\|_{C^{p+2}(\real^3 \times (-\infty,T])},
\qquad 1 \leq k \leq N,
\]
with $C$ determined by $\ep, T$ and $\|[a,b]\|_{C^{p+2}(\real^3 \times (-\infty,T])}$. Hence, by an induction argument,
\[
\|b_N\|_{C^p(\real^3 \times (-\infty,T])} \leq C
\]
with $C$ determined by $\ep, T$ and $\|[a,b,q]\|_{C^{p+2N+2}(\real^3 \times (-\infty,T])}$.

With the $b_k$ chosen above we have
\begin{align*}
\Lcal \left ( \sum_{k=0}^N b_k(x,t) \frac{ (t-|x|)_+^k}{k!} + R_N(x,t) \right )
= (\Lcal b_N) \frac{(t - |x|)_+^N}{N!} + \Lcal R_N.
\end{align*}
Hence for \eqref{eq:Lva} to hold, we need to find a $R_N$ which is supported on $t \geq |x|$, zero in a neighborhood 
of $(0,0)$ and is the solution of the IVP
\begin{align*}
\Lcal R_N = - (\Lcal b_N) \frac{(t - |x|)_+^N}{N!}, & \qquad \text{on } \real^3 \times \real,
\\
R_N = 0, & \qquad \text{for } t <<0.
\end{align*}
Then repeating the argument used in the proof of Proposition \ref{Heaviside}, we can show that the $v$ defined by
\eqref{eq:vdef} is smooth on $t \geq |x|$, zero in a neighborhood of $(0,0)$ and
\[
\|v\|_{C^3(Q_{0,0})} \leq C
\]
with $C$ determined by $\ep, T$ and $\|[a,b,q]\|_{C^{21}(\real^3 \times (-\infty,T])}$.

Noting that \eqref{eq:Lva} implies \eqref{ch BVP for delta 1}, it remains to verify (\ref{ch BVP for delta 2}). From
\eqref{eq:vdef}, we see that for $ t \geq |x|$ we have 
\[
v(x,t) = b_0(x,t) + b_1(x,t) (t-|x|) + \sum_{k=2}^6 b_k(x,t) (t-|x|)^k/k! + R_6(x,t).
\]
Since $R_6$ is supported in $t \geq |x|$ and is at least $C^2$, we see that $\Tcal R_6 =0$ on $t=|x|$. Further, on 
$t=|x|$
\begin{align*}
\Tcal ( b_1(x,t) (t-|x|) ) & = (\Tcal b_1(x,t)) (t-|x|) + b_1 \Tcal (t-|x|)
= b_1(x,t) (\pa_t + \theta \cdot \nabla )(t-|x|) = 0.
\end{align*}
Hence, noting that $\Lcal = \Box + \Mcal$ and that $\Box (|x|^{-1}) =0$ for $x \neq 0$, on $t=|x|$ we have
\[
(\Tcal v)(x,t) = (\Tcal b_0)(x,t) = \Lcal ( \alpha(x,t) - |x|^{-1} ) + \Mcal (|x|^{-1} )
= \Lcal \alpha)(x, t), \qquad x \neq 0.
\]


\section{Proof of Proposition \ref{prop:carleman}}\label{sec:carleman}

It is sufficient to prove the proposition when $\mc{L} = \Box$ since the lower order terms can be absorbed 
in the LHS of the inequality. This argument also shows that the constant in the inequality depends only
on $T, |\xi|, |\tau|$ and $\|[a,b]\|_{C^1(Q_{\xi,\tau})}$, $\|c\|_{C^0(Q_{\xi,\tau})}$.

We prove the proposition when $\xi=0, \tau=0$. The general $\xi, \tau$ case follows by translation. For the $\xi=0, \tau=0$ case, we denote
$Q_{\xi,\tau}, H_{\xi,\tau}$ and $C_{\xi,\tau}$ by $Q,H,C$. 

 Our proof uses Theorem A.7 of \cite{RS_1} and we keep the notation used there. We first observe that Theorem A.7 in \cite{RS_1}
 is valid for the weight $\phi(x, t) = t$. Even though it does not satisfy the strong pseudo-convexity criterion needed in Theorem A.7 of \cite{RS_1}, it does satisfy (A.25) in \cite{RS_1} which is what is needed to obtain the Carleman estimate in Theorem A.7 of \cite{RS_1}. 
 
 For our problem, $p(x, t, \xi, \tau) = -\tau^2 +\xi^2$ and $\phi(x,t) = t$. Hence
 		 \begin{align}
 		\nn & A = p(x, t, \xi, \tau) - \sigma^2 p(x, t, \nabla \phi, \phi_t ) = -\tau^2 + |\xi|^2 + \sigma^2, \\
 		\nn & B = \{p, \phi\} = p_\tau \phi_t = -2 \tau
 		\end{align}
 		implying $\{A, B\} = 0$. So (A.25) holds if we consider $g$ to be any positive constant and then choose $d>0$ accordingly. Consequently we have 
 		\begin{align}
 		   \sigma \int_Q e^{ 2\sigma t} \left( |\nabla_{x,t} w|^2 + \sigma^2 w^2  \right) + \sigma \int_{\dd Q} \nu \cdot E 
 		   \le C \int_Q e^{ 2\sigma t} |\square w|^2 
 		   \label{eq:carltemp}
 		\end{align}
 		where $\nu = (\nu_0, \nu_1, \nu_2, \nu_3)$ is  the outward unit normal and the zero index corresponds to $t$. Now $\partial Q = C \cup H$ and we compute the expressions appearing in the integral over $C$ and $H$.
 		
 	Using the calculation of the boundary terms for the wave operator from subsection A.2 in \cite{RS_1}, we write 
 		\begin{align*}
 		    \f{1}{2} E_j & = 2 (e^{\sigma t} w)_{x_j} (e^{\sigma t} w)_t - g(x,t) (e^{\sigma t} w)_{x_j} e^{\sigma t} w , \qd j\in \{1,2,3\}, \\
 		    & = e^{2 \sigma t} \left(2\sigma w w_{x_j} + 2 w_{x_j} w_t - g w w_{x_j} \right)
 		\end{align*}
 		and, 
 		\begin{align*}
 		    \f{1}{2} E_0 & = - |\nabla_{x,t} (e^{\sigma t} w) |^2 - \sigma^2 (e^{\sigma t} w)^2 +g e^{\sigma t} w (e^{\sigma t} w)_t \\
 		    & = e^{2 \sigma t} \left( -(w_t + \sigma w)^2 - |\nabla w|^2 -\sigma^2 w^2 + g w (w_t + \sigma w) \right) \\
 		    & = e^{2 \sigma t} \left( -|\nabla_{x,t} w|^2 - 2\sigma^2 w^2 - 2 \sigma w w_t + g w (w_t+ \sigma w) \right). 
 		\end{align*}

 		On $C$, we have $ \sqrt{2}\,\nu(x,t) =( -1, \theta )$, hence
 		\begin{align}
 	   \nn \nu\cdot E = & \ \rt 2 \sigma e^{2 \sigma t}
 	   \left [
 	   ( |\nabla_{x,t} w|^2 + 2 w_t \, \theta \cdot \nabla w ) + 2 \sigma w ( w_t + \theta \cdot \nabla_{x} w) 
 	   + 2\sigma^2 w^2 - \sigma g w^2 \right .\\
 	   \label{bdry term} & \hspace*{2cm}
 	   \left .- g w \left( w_t + \theta \cdot \nabla w \right) 
 	   \right ].
 	    \end{align}
 		Now
 	   \begin{align}
 	   \nn |\nabla_{x,t} w|^2+2 w_t \, \theta \cdot \nabla_{x} w = 
 	   & \  w_t^2 + 
 	   (\theta \cdot \nabla w )^2 + \sum_{i<j}\f{(\Omega_{ij}v)^2}{|x|^2}
 	   + 2 w_t \, \theta \cdot \nabla w \\
 	   \label{equality in Carleman} = & \ \sum_{i<j}\f{(\Omega_{ij} w)^2}{|x|^2}+\left( w_t+\theta \cdot\nabla w  \right)^2
 	   \end{align}
 	    where $\Omega_{ij} = x^i \partial_j - x^j \partial_i$,
 	   $i,j=1,2,3$
 	    are the angular derivatives. If we define
 	    \[
 	    P =  w_t + \theta \cdot \nabla w
 	    \]
 	    then using \eqref{equality in Carleman} in \eqref{bdry term}
 	    we obtain
 	   \begin{align}
 	   \nn \nu \cdot E & = \rt 2\sigma e^{2 \sigma t} 
 	   \left ( |x|^{-2} \sum_{i<j}( \Omega_{ij} w)^2 +
 	   P^2 + 2 \sigma P w + 2\sigma^2 w^2 - \sigma g w^2-g P w  
 	  \right )
 	   \\
 	   \nn & \ge \rt 2 \sigma e^{2 \sigma t} 
 	   \left( |x|^{-2} \sum_{i<j} (\Omega_{ij} w )^2
 	   + P^2- 2\sigma |P| |w| + 2 \sigma^2 w^2 - \sigma \|g\|_{\iy} w^2 - \|g\|_{\iy} P w \right)\\
 	   \nn & \ge \rt 2 \sigma e^{2 \sigma t} \left( |x|^{-2} \sum_{i<j}(\Omega_{ij} w )^2
 	   + P^2 \left( 1 - \epsilon - \delta \right) + w^2 \left( 2 \sigma^2 - \f{\sigma^2} {\epsilon} - \sigma \|g\|_{\iy} - \f{1} {4 \delta} \|g\|_{\iy}^2 \right) \right).
 	   \end{align}
 	   Taking $\epsilon = \f{3}{4},\ \delta = \f{1}{8}$ and $ \sigma > 0 $ large, we have 
 	    \begin{align}\label{conical bdry term 2}
 	   \nu \cdot E 
 	   & ~ \cgeq ~ \sigma e^{2 \sigma  t} 
 	   \left( |x|^{-2} \sum_{i<j} (\Omega_{ij} w )^2 + (w_t + \theta \cdot \nabla w)^2 \right) + \sigma^2 w^2,
 	   \qquad \text{on } C.
 	   \end{align}

 	   Next, on $H$, noting that $\nu=(1,0,0,0)$, using the
 	   A.M-G.M inequality as done in obtaining (\ref{conical bdry term 2}), we obtain 
 	   \begin{align}
 	   \nn \nu \cdot E = E_0 & = 2 e^{2\sigma t} \left( -|\nabla_{x,t} w|^2 - 2 \sigma^2 w^2 - 2 \sigma w w_t + g w \left( w_t + \sigma w \right)\right)\\
 	   \nn & \cgeq -  e^{2 \sigma t} \left( |\nabla_{x,t} w|^2 + \sigma^2 w^2 \right),
 	   \end{align}
 	   hence
 	   \begin{align}\label{bdry term at the top} 
 	       |\nu \cdot E| \cleq  e^{2 \sigma t} \left( |\nabla_{x,t} w|^2 + \sigma^2 w^2\right)
 	       \qquad \text{on } H.
 	   \end{align}
 	   
 	 So using \eqref{conical bdry term 2} and 
 	 \eqref{bdry term at the top} in (\ref{eq:carltemp}), we obtain
 	  \begin{align*}
 	   \sigma \int_Q e^{ 2\sigma t} \left( |\nabla_{x,t} w|^2 + \sigma^2 w^2  \right) & + \sigma \int_C { e^{ 2\sigma t}} \left( |\nabla_{C} w|^2 + \sigma^2 w^2  \right) 
 	   \\
 	  & \cleq  \int_Q e^{ 2\sigma t} |\square w|^2  + \sigma \int_H e^{ 2\sigma t} \left( |\nabla_{x,t} w|^2 + \sigma^2 w^2  \right).    
 	  \end{align*}
 	  where the constant is independent of $w,\ \sigma$ and depends only on $\tau,T$. This completes the proof of the proposition.

 
 \section{Construction of a diverse set of locations}\label{sec:diverse}
 
 Let $D$ be a non-empty bounded open subset of $\real^d$. We give two ways to construct a diverse set of locations with 
 respect $D$. If $\xi_1, \cdots, \xi_k$ is a collection of vectors in $\real^d$ then the { the affine hull of $\xi_1, \cdots, \xi_k$ 
 is}
  \[
 {\mc{A}(\xi_1, \cdots, \xi_k)} := \left \{ \sum_{i=1}^k \alpha_i \xi_i : 
 \alpha_i \in \real, ~ \sum_{i=1}^k \alpha_i = 1 \right \}
 \]
 and the convex hull of $\xi_1, \cdots, \xi_k$ is
 \[
 {\mc{C}(\xi_1, \cdots, \xi_k)} := \left \{ \sum_{i=1}^k \alpha_i \xi_i : 
 \alpha_i \geq 0, ~ \sum_{i=1}^k \alpha_i = 1 \right \}.
 \]
 
 The following proposition gives two ways to generate a diverse collection of sources.
\begin{proposition}\label{prop:diverse}
Suppose $d$ is a positive integer and $D$ is a non-empty bounded open subset of $\real^d$.
\begin{enumerate}
\item[(a)] Suppose $\xi_1, \cdots, \xi_d$ are linearly independent vectors 
in $\real^d$ and $\xi_{d+1} \in {\mc{C}(\xi_1, \cdots, \xi_d)}$ but different from $\xi_1, \cdots, \xi_d$. 
If ${\mc{A}(\xi_1, \cdots, \xi_d)}$ does not intersect $\Dbar$ then $\xi_1, \cdots, \xi_{d+1}$ is a diverse set
of locations with respect to $D$.
\item[(b)] If $\xi_1, \cdots, \xi_{d+1}$ is a set of locations in $\real^d \setminus \Dbar$ such that $\Dbar$ is in the interior of ${\mc{C}(\xi_1, \cdots, \xi_{d+1})}$ then $\xi_1, \cdots, \xi_{d+1}$ is a diverse set of locations with respect to $D$.
\end{enumerate}
\end{proposition}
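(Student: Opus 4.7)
The plan is to reformulate the diverse condition as uniform invertibility of a matrix-valued function on $\Dbar$, reduce that to pointwise invertibility via continuity and compactness, and then check pointwise invertibility in (a) and (b) by short linear-algebraic arguments tailored to each geometric hypothesis.

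For each $x \in \Dbar$, the diverse inequality at $x$ is equivalent to invertibility of the $(d+1)\times(d+1)$ matrix
\[
M(x) = \begin{pmatrix} 1 & \theta_1(x)^T \\ \vdots & \vdots \\ 1 & \theta_{d+1}(x)^T \end{pmatrix},
\]
and the constant in the definition may be taken as $\sup_{x\in \Dbar} \|M(x)^{-1}\|$. In both parts each $\xi_i$ lies outside $\Dbar$ --- by hypothesis in (b), and in (a) because $\xi_{d+1} \in \mc{C}(\xi_1,\ldots,\xi_d) \subset \mc{A}(\xi_1,\ldots,\xi_d)$ is assumed disjoint from $\Dbar$ --- so $x \mapsto M(x)$ is continuous on the compact set $\Dbar$, and uniform invertibility reduces to pointwise invertibility at every $x \in \Dbar$.

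For (b) I will use that $\Dbar$ has nonempty interior, whence $\xi_1,\ldots,\xi_{d+1}$ are automatically affinely independent and each $x \in \Dbar$ admits $x = \sum c_i \xi_i$ with $c_i>0$, $\sum c_i = 1$, yielding $\sum c_i |x-\xi_i|\,\theta_i(x)=0$. If $M(x)(a,b)^T = 0$, I multiply $a+\theta_i(x)\cdot b = 0$ by $c_i |x-\xi_i|$ and sum: the $b$-contribution vanishes and forces $a=0$; the residual condition $\theta_i(x)\cdot b = 0$ for all $i$ would place every $\xi_i$, together with $x$, in the affine hyperplane $\{y: y\cdot b = x\cdot b\}$, incompatible with $x$ being interior to the convex hull, so $b=0$.

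For (a) I will write $\xi_{d+1} = \sum_{i=1}^d \beta_i \xi_i$ with $\beta_i \geq 0$, $\sum \beta_i = 1$; since $\xi_{d+1}\neq \xi_j$ for each $j$, at least two of the $\beta_i$'s are strictly positive. Dotting $x-\xi_{d+1} = \sum_{i=1}^d \beta_i(x-\xi_i)$ with $b$ and substituting $\theta_i(x)\cdot b = -a$ produces the scalar identity $a\bigl(|x-\xi_{d+1}| - \sum_{i=1}^d \beta_i |x-\xi_i|\bigr) = 0$. The triangle inequality makes the bracket nonpositive, with equality only when the nonzero $(x-\xi_i)$'s (those with $\beta_i > 0$) are positive multiples of a common vector, which would place $x$ on the line through some pair $\xi_i, \xi_j$ and hence in $\mc{A}(\xi_1,\ldots,\xi_d)$, contrary to hypothesis; so $a=0$. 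The remaining equations $\theta_i(x)\cdot b = 0$ for $i\leq d$ then force $b=0$, because $x-\xi_1,\ldots,x-\xi_d$ are linearly independent --- any relation $\sum c_i(x-\xi_i)=0$ either has $\sum c_i = 0$ and collapses by linear independence of $\xi_1,\ldots,\xi_d$, or has $\sum c_i \neq 0$ and again places $x$ in $\mc{A}(\xi_1,\ldots,\xi_d)$. The delicate step will be the equality case in the triangle inequality for (a): the argument has to exploit strict positivity of two $\beta_i$'s (guaranteed by $\xi_{d+1}\neq \xi_j$) to turn equality into collinearity of $x$ with a pair of $\xi_i$'s and hence membership in the forbidden affine hull. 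Everything else is routine linear algebra plus the continuity-compactness passage to uniformity.
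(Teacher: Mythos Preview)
Your proposal is correct and follows essentially the same route as the paper. Both reduce diversity to pointwise invertibility of $M(x)$ on the compact set $\Dbar$; for (a) both rely on the linear independence of $x-\xi_1,\ldots,x-\xi_d$ together with the strict triangle inequality $|x-\xi_{d+1}|<\sum\beta_i|x-\xi_i|$ (the paper reaches this via a determinant computation with column operations, you via a kernel argument, but the two are equivalent); for (b) both use the barycentric representation of $x$ with strictly positive coefficients and affine independence of the $\xi_i$.
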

\noindent
{\bf Remark.} If $\rho>0$ and $N > \rho \sqrt{d}$ then
$Ne_1, \cdots, Ne_d, N(e_1+\cdots+ e_d)/d$ is a diverse set of locations collection with respect to $\rho B$. This is so because of (a) and that
${\mc{A}(Ne_1, \cdots, Ne_d)}$ does not intersect $\rho\Bbar$.

\begin{proof}
Suppose $\xi_i \in \real^d \setminus \Dbar$, $i=1, \cdots, d+1$. 
 For any $x \in \Dbar$, define
\[
\theta_i(x) = \frac{ x - \xi_i}{|x-\xi_i|}, \qquad i=1, \cdots, d+1,
\]
and the $(d+1) \times (d+1)$ matrix
\[
M(x) = \begin{bmatrix} 1 & 1 & \cdots & 1 \\ \theta_1(x) & \theta_1(x) & \cdots & \theta_{d+1}(x) \end{bmatrix}, 
\qquad x \in \Dbar.
\]
Then $\xi_1, \cdots, \xi_{d+1}$ is a diverse set of locations with respect to $D$ iff
\[
\|[a,b]\| \cleq \| M(x)[a,b] \|,
\qquad \forall x \in \Dbar, a \in \real, b \in \real^d,
\]
with the constant independent of $x,a,b$. This condition is equivalent to the invertibility 
of $M(x)$ for all $x \in \Dbar$ because the invertibility of $M(x)$ implies the operator norm
$\|M(x)\|$ is positive so the continuous map
\[
x \in \Dbar \to M(x) \to \|M(x)\|
\]
has a positive lower bound since $\Dbar$ is compact.

\noindent
{\underline{Proof of (a).}}
By hypothesis,
\[
\xi_{d+1} = \sum_{i=1}^d \alpha_i \xi_i
\]
for some $\alpha_i \geq 0$ with $\sum\limits_{i=1}^d \alpha_i =1$ and at least two of the 
$\alpha_i$ are non-zero. Hence, for any $x \in \Bbar$,
\beqn
x - \xi_{d+1} = \sum_{i=1}^d \alpha_i (x - \xi_i).
\label{eq:diverse1}
\eeqn
Regarding vectors as columns, using elementary column operations,
we have the determinant relations
\begin{align*}
( \det M(x)) \, \prod_{i=1}^{d+1} |x - \xi_i|  &=
\begin{vmatrix} 
 |x-\xi_1|& \cdots & |x-\xi_d| & |x- \xi_{d+1}|
 \\
 x - \xi_1 & \cdots & x- \xi_d & x - \xi_{d+1}
 \end{vmatrix}
 \\
& = \begin{vmatrix} 
 |x-\xi_1|& \cdots & |x-\xi_d| & \beta
 \\
 x - \xi_1 & \cdots & x- \xi_d & 0
 \end{vmatrix}
 \\
&  = \beta \begin{vmatrix} 
 x - \xi_1 & \cdots & x- \xi_d
 \end{vmatrix}.
\end{align*}
where $\beta = |x- \xi_{d+1}| - \sum\limits_{i=1}^d \alpha_i |x-\xi_i|.$

For $x \in \Dbar$, the vectors $x - \xi_1, \cdots, x- \xi_d$ are linearly independent because, if
$ \sum\limits_{i=1}^d \lambda_i (x-\xi_i) =0$, then
\[
\left (\sum_{i=1}^d \lambda_i \right ) x= \sum_{i=1}^d \lambda_i \xi_i.
\]
If $\sum\limits_{i=1}^d \lambda_i=0$ then $\sum\limits_{i=1}^d \lambda_i \xi_i =0$ which forces 
$\lambda_i=0$ from the linear independence of $\xi^1, \cdots, \xi^d$. If
$\sum\limits_{i=1}^d \lambda_i \neq 0$ then
\[
x = \sum_{i=1}^d \sigma_i \xi_i
\]
with $\sum\limits_{i=1}^d \sigma_i =1$ where $\sigma_i = \frac{\lambda_i}{\sum\limits_{i=1}^d \lambda_i}$.
This violates the hypothesis that $\Dbar$ does not intersect ${\mc{A}(\xi_1, \cdots, \xi_d)}$. Hence, for $x \in \Dbar$, the determinant 
$\begin{vmatrix}  x - \xi_1 & \cdots & x- \xi_d \end{vmatrix}$ is non-zero.

Next, for $x \in \Dbar$, from (\ref{eq:diverse1}) and the triangle inequality, we have
\begin{align*}
|x - \xi_{d+1}| & = \left | \sum_{i=1}^d \alpha_i (x - \xi_i) \right |
< \sum_{i=1}^d \alpha_i |x- \xi_i|
\end{align*}
because $\alpha_i \geq 0$, the $x-\xi_i$, $i=1, \cdots, d$ are not parallel
(because they are linearly independent as shown above) and at least two of the 
$\alpha_i (x- \xi_i)$ are non-zero. Hence $\beta \neq 0$. 

So combining the conclusions of the previous two paragraphs, we have $\det M(x) \neq 0$ for all $x \in \Dbar$, which completes the proof of (a).


\noindent
{\underline{Proof of (b).}}

We start with the claim that every $x \in \real^d$ has a unique representation as
$ x = \sum\limits_{i=1}^{d+1} \alpha_i \xi_i$
for some $\alpha_i \in \real$ with $\sum\limits_{i=1}^{d+1} \alpha_i=1$. We postpone the proof of this claim to the end of this section and continue with the proof of (b).


For $x \in \Dbar$, the invertibility of $M(x)$ is equivalent to the 
linear independence of the vectors 
\[
[|x-\xi_1|, x- \xi_1],  \cdots, [|x-\xi_{d+1}|, x- \xi_{d+1}]
\]
in $\real^{d+1}$. If there are 
$\lambda_1, \cdots, \lambda_{d+1} \in \real$ such that
\[
\sum_{i=1}^{d+1} \lambda_i [|x-\xi_i|, x-\xi_i] =0
\]
then
\begin{align}
\sum\limits_{i=1}^{d+1} \lambda_i |x-\xi_i|  =0, \qquad \sum_{i=1}^{d+1} \lambda_i (x - \xi_i) = 0.
\label{eq:conv1}
\end{align}

If $ \sum\limits_{i=1}^{d+1} \lambda_i \neq 0$, define 
\[
\mu_i= \frac{\lambda_i}{\sum\limits_{i=1}^{d+1} \lambda_i}, \qquad i=1, \cdots, d+1.
\]
Then $\sum\limits_{i=1}^{d+1} \mu_i = 1$ and (\ref{eq:conv1}) implies
\beqn
x = \sum_{i=1}^{d+1} \mu_i \xi_i, \qquad \sum_{i=1}^{d+1} \mu_i |x-\xi_i| =0.
\label{eq:conv2}
\eeqn
Now the $\mu_i$ are uniquely determined because of the claim in the second paragraph of the proof of (b). 
Further, since $x \in {\mc{C}(\xi_1, \cdots, \xi_{d+1})}$, we have $\mu_i \geq 0$, so the relation
$\sum\limits_{i=0}^n \mu_i = 1$ implies least one of the $\mu_i$ is positive. 
Hence the second
equation in (\ref{eq:conv2}) implies that $x= \xi_i$ for at least one of the $i$, which contradicts our assumption that any $x \in \Dbar$ is in the interior of ${\mc{C}(\xi_1, \cdots, \xi_{d+1})}$.

So we must have $\sum\limits_{i=1}^{d+1} \lambda_i =0$; then (\ref{eq:conv1}) implies 
$\sum\limits_{i=1}^{d+1} \lambda_i \xi_i = 0$. From our claim, every $x \in \real^d$ has a unique representation
\[
x = \sum_{i=1}^{d+1} \alpha_i \xi_i
\]
for some $\alpha_i$ with $\sum\limits_{i=1}^{d+1} \alpha_i =1$. However, we also have
\[
x = \sum\limits_{i=1}^{d+1} (\alpha_i + \lambda_i) \xi_i
\]
with $\sum\limits_{i=0}^{d+1} ( \alpha_i + \lambda_i) = 1$.
So the unique representation property implies $\lambda_i=0$, $i=1, \cdots, {d+1}$, proving (b). 

It remains to prove the unique representation claim stated at the beginning of the proof of (b). We
observe that if $\alpha_i \in \real$ with $\sum\limits_{i=1}^{d+1} \alpha_i = 1$ then
\begin{align}
\sum_{i=1}^{d+1} \alpha_i \xi_i & = \sum_{i=1}^d \alpha_i ( \xi_i - \xi_{d+1}) +
\left (\sum_{i=1}^{d+1} \alpha_i \right ) \xi_{d+1}
\nonumber
\\
& = \xi_{d+1} + \sum_{i=1}^d \alpha_i ( \xi_i - \xi_{d+1}).
\label{eq:ximxd}
\end{align}
Hence
\[
{\mc{A}(\xi_1, \cdots, \xi_{d+1})} = \xi_{d+1} + \text{span}(\xi_1 - \xi_{d+1}, \cdots, \xi_d - \xi_{d+1}).
\]

Now $D$ is an open subset of $\real^d$ contained in ${\mc{C}(\xi_1, \cdots, \xi_{d+1})}$ which is a subset
of ${\mc{A}(\xi_1, \cdots, \xi_{d+1})}$. Hence $\xi_1 - \xi_{d+1}, \cdots, \xi_d - \xi_{d+1}$ must be a basis
for $\real^d$. So $\text{span}(\xi_1 - \xi_{d+1}, \cdots, \xi_d - \xi_{d+1}) = \real^d$ and 
${\mc{A}(\xi_1, \cdots, \xi_{d+1})} = \real^d$. Finally, the representation is unique because
of (\ref{eq:ximxd}) and the linear independence of $\xi_1 - \xi_{d+1}, \cdots, \xi_d - \xi_{d+1}$.
\end{proof}

\section{Appendix}
In this section we prove the existence of a unique distributional solution
of an IVP for a second order hyperbolic PDE, along with an estimate of the solution in terms of the coefficients. This is a standard result but a statement and a proof of the result, suitable for our use, is difficult to find. We give a standard proof based on the well-posedness result for an 
IBVP for second order hyperbolic PDEs in \cite{Evansbook}.

We use the notation for time dependent Sobolev spaces in section 5.9.2 of \cite{Evansbook}. Suppose $T>0$, $D$ is a bounded region in $\real^n$ with a smooth boundary, $a(x,t), q(x,t)$ are compactly supported smooth functions on $ \real^n \times \real$ and $b(x,t)$ is a compactly supported smooth n-dimensional vector field on 
$\real^n \times \real$. Define
\[
\Lcal := \pa_t^2 - \Delta - a \pa_t + b \cdot \nabla + q, \qquad D_T = D \times (0,T),
\]
the $L^2$ inner product
\[
( v, w )  = \int_D v(x) \, w(x) \, dx,
\qquad v,w \in L^2(D),
\]
and the bilinear forms
\begin{align*}
A[v,w;t] & = - \int_D a(x,t) \, v(x) \, w(x) \, dx,
\\
B[v,w;t] & = \int_{D} \nabla v(x) \cdot \nabla w(x) + b(x,t) \cdot \nabla v(x) \, w(x) + q(x,t) \, v(x) \, w(x) \, dx,
\end{align*}
for $v, w \in H^1(D)$, $0 \leq t \leq T$. For functions $u(x,t)$ on $D \times (0,T)$, the expression
$u(t)$ will denote the function $u(t): D \to \real$ with $u(t)(x) = u(x,t)$.

For a function $F \in L^2(D_T)$, consider the IBVP 
\begin{align}
\Lcal u = F, & \qquad \text{on } D \times (0,T),
\label{eq:evansde}
\\
u(\cdot,t\smeq 0) = 0, \qquad u_t(\cdot, t \smeq 0) = 0, & \qquad \text{on } D,
\label{eq:evansic}
\\
u = 0 & \qquad \text{on } \pa D \times [0,T].
\label{eq:evansbc}
\end{align}
A function $u \in L^2(0,T; H_0^1(D))$ with $u_t \in L^2(0,T; L^2(D))$ and $u_{tt} \in L^2(0,T; H^{-1}(D))$ is said to be a weak solution of the IBVP \evansibvp if the following holds:
\begin{flalign}
(i)  &~~( u_{tt}(t), v) + A[u_t(t), v; t] + B[u(t), v; t] = (F(t), v), 
~~ \forall v \in H_0^1(D), ~~ \text{a.e. } 0 \leq t \leq T,&&
\label{eq:weakde}
\\
(ii) & ~~ u(\cdot, 0) = 0, ~~ u_t(\cdot, 0) = 0. &&
\label{eq:weakic}
\end{flalign}
Note that by Sobolev space theory, $u \in C([0,T], L^2(D))$ and $u_t \in C([0,T], H^{-1}(D))$, so (ii) makes sense.
We also observe that if the weak solution $u$ is in $H^2(D \times (0,T))$ then a standard argument shows that $u$ satisfies \eqref{eq:evansde} and \eqref{eq:evansbc} as functions.

 Theorems 3,4,5 in Section 7.2 in \cite{Evansbook} give a well-posedness result for this IBVP. Further, 
 Theorem 6 in Section 7.2 in \cite{Evansbook} gives higher order regularity if $F$ has higher order regularity and satisfies a matching condition on $\pa D \times [0,T]$. We need only a special case of the general results in \cite{Evansbook}.
\begin{prop}\label{prop:evansm}
 If $F \in L^2(D_T)$ then the IBVP \evansibvp
 has a unique weak solution $u$. Further
 $u \in L^\infty(0,T; H_0^1(D))$ and $u_t \in L^\infty(0,T; L^2(U))$ with
 \beqn
 \esssup_{0 \leq t \leq T}
  \left ( \|u(t)\|_{H_0^1(D)} + \|u_t(t)\|_{L^2(D)} \right )
 \leq C  \|F\|_{L^2(D_T)}
 \eeqn
 and $C$ determined by $T$ and $\|[a,b,q]\|_{L^\infty(D_T)}$. Further, if $m$ is a positive integer and
 \begin{flalign}
& \pa_t^k F \in L^2(0,T; H^{m-k}(D)) \qquad \text{for } k=0, \cdots,m, && 
\\
 & (\pa_t^k F)(\cdot,0)|_{\pa D} = 0 \qquad  \text{for $k=0, \cdots, m-2$}, &&
\end{flalign}
then $ \pa_t^k u \in L^\infty(0,T; H^{m+1-k}(D))$ for $k=0,1, \cdots, m+1$ and we have the estimate
\beqn
\esssup_{0 \leq t \leq T} \; \sum_{k=0}^{m+1} \| \pa_t^k u(\cdot,t)\|_{H^{m+1-k}(D)}
 \leq C \sum_{k=0}^m  \|\pa_t^k F\|_{L^2(0,T; H^{m-k}(D))},
 \eeqn
 with $C$ determined by $T$ and $\|[a,b,q]\|_{C^m(D_T)}$.
\end{prop}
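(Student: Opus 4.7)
The plan is to follow the Galerkin approximation scheme of Section 7.2 of \cite{Evansbook}, applied to our operator $\Lcal$, while tracking the dependence of the constants on $T$ and the coefficients. The first order terms in $a,b$ and the zeroth order term $q$ are lower order compared to the wave part, so they are easily absorbed by Cauchy--Schwarz and Gronwall.

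For existence of a weak solution when $F \in L^2(D_T)$, I would choose a smooth orthonormal basis $\{w_k\}$ of $L^2(D)$ consisting of Dirichlet eigenfunctions of $-\Delta$ on $D$, seek $u^N(x,t) = \sum_{k=1}^N d_k^N(t)\, w_k(x)$ solving the resulting linear ODE system in $d_k^N$ with zero initial data, and test the Galerkin equation with $u^N_t$. Using that $A[u^N_t,u^N_t;t]$ is controlled by $\|a\|_{L^\infty}\|u^N_t\|_{L^2}^2$ and that $B[u^N,u^N_t;t]$ differs from $\tfrac{1}{2}\tfrac{d}{dt}\|\nabla u^N\|_{L^2}^2$ by terms bounded by $C(\|\nabla u^N\|_{L^2}^2 + \|u^N_t\|_{L^2}^2)$, one obtains the energy differential inequality
\[
\tfrac{d}{dt}\bigl(\|u^N_t(t)\|_{L^2(D)}^2 + \|\nabla u^N(t)\|_{L^2(D)}^2\bigr) \; \le \; C_1 \bigl( \|u^N_t\|_{L^2(D)}^2 + \|\nabla u^N\|_{L^2(D)}^2 + \|F(t)\|_{L^2(D)}^2\bigr)
\]
with $C_1$ depending only on $\|[a,b,q]\|_{L^\infty(D_T)}$. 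Gronwall and a weak-$*$ compactness argument in $L^\infty(0,T; H_0^1(D))\cap W^{1,\infty}(0,T; L^2(D))$ produce a weak solution satisfying the stated energy bound. Uniqueness requires Lions's trick, testing the weak formulation against $v(t)=\int_t^s u(r)\,dr$ for fixed $s$, because $u_{tt}$ is only in $L^2(0,T;H^{-1}(D))$; this is exactly the argument in \cite[Thm.~5]{Evansbook}.

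For the higher regularity, I would proceed by induction on $m$. Differentiating the PDE $k$ times in $t$, the function $u^{(k)} := \pa_t^k u$ satisfies $\Lcal u^{(k)} = F^{(k)}$, where $F^{(k)}$ involves $\pa_t^k F$ plus commutator terms that mix $\pa_t^j u$ for $j < k$ with time derivatives of $a,b,q$ up to order $k$. Applying the equation at $t=0$ together with the zero initial conditions on $u$, one sees inductively that $u^{(k)}(\cdot,0) = F(\cdot,0)$ if $k=2$, and more generally $u^{(k)}(\cdot,0)$ is a polynomial expression in $\pa_t^j F(\cdot,0)$ for $j \le k-2$. Hence the hypothesis $(\pa_t^j F)(\cdot,0)|_{\pa D}=0$ for $j \le m-2$ is exactly what is needed to give $u^{(k)}(\cdot,0)\in H_0^1(D)$ for the required range of $k$. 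Once all time derivatives are controlled in the appropriate $L^\infty(0,T;L^2(D))$ spaces by the base case applied to the differentiated equations, the missing spatial regularity $\pa_t^k u \in L^\infty(0,T; H^{m+1-k}(D))$ is recovered from elliptic regularity for $-\Delta(\pa_t^k u)$ whose right-hand side, rewritten from $\Lcal u^{(k)} = F^{(k)}$, lies in $L^\infty(0,T; H^{m-1-k}(D))$ by the inductive hypothesis.

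The main obstacle is bookkeeping: verifying that the compatibility assumption as stated (only through $k \le m-2$) suffices, and extracting a constant depending only on $\|[a,b,q]\|_{C^m(D_T)}$ when the commutators at stage $k$ involve $\pa_t^\ell$ of the coefficients for $\ell \le k$. The fact that no compatibility is required at $k=m-1, m$ matches the regularity claim, since $\pa_t^{m}u$ and $\pa_t^{m+1}u$ land in $H^1(D)$ and $L^2(D)$ respectively, without claimed zero trace on $\pa D$.
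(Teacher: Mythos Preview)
Your proposal is correct and follows exactly the approach the paper indicates: the paper does not give a self-contained proof but simply refers to Theorems 3--6 in Section 7.2 of \cite{Evansbook}, noting that the extra $a\,\partial_t$ term is handled by minor modifications and that the constants' dependence on the coefficients comes from tracking them through Evans's argument. Your sketch (Galerkin approximation, energy inequality with Gronwall, Lions's trick for uniqueness, then time-differentiation plus elliptic regularity for higher $m$) is precisely that argument spelled out, so there is nothing to compare.
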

The theorems in \cite{Evansbook} are for a general second order hyperbolic operator similar to our $\Lcal$ except
with $\Delta$ replaced by a general second order elliptic operator, and without the $a \pa_t$ term.
However, with minor modifications (particularly to the proof of Theorem 4 in section 7.2 of \cite{Evansbook}), 
the same proof works for our $\Lcal$. In our proposition, we have also added the dependence of $C$ on the coefficients, which follows easily if, in the proof, we track the dependence of the constants on the coefficients.

%

From Proposition \ref{prop:evansm} we derive the following existence result for an IVP, needed below. 
\begin{prop}\label{prop:evansivp}
Suppose $m$ is a positive integer, $\pa_t^k F \in L^2(0,T; H^{m-k}(\real^n))$ for  $k=0, \cdots,m$ and
$F$ is compactly supported. Then the IVP 
\begin{align}
\Lcal u = F, & \qquad \text{on } \real^n \times (0, T),
\label{eq:evansivpde}
\\
u(\cdot, t \smeq 0) = 0, ~~ u_t(\cdot, t \smeq 0) & \qquad \text{on } \real^n
\label{eq:evansivpic}
\end{align}
has a solution $u$ with $ \pa_t^k u \in L^\infty(0,T; H^{m+1-k}(\real^n))$ for $k=0,1, \cdots, m+1$. 
Further
\beqn
\esssup_{0 \leq t \leq T} \; \sum_{k=0}^{m+1} \| \pa_t^k u(\cdot,t)\|_{H^{m+1-k}(\real^n)}
 \leq C \sum_{k=0}^m  \|\pa_t^k F\|_{L^2(0,T; H^{m-k}(\real^n))},
 \eeqn
 with $C$ determined by $T$ and $\|[a,b,q]\|_{C^m(\real^n \times [0,T])}$.
\end{prop}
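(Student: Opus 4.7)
The plan is to reduce the IVP on $\real^n \times (0,T)$ to the IBVP of Proposition \ref{prop:evansm} on a large ball, exploiting finite propagation speed for $\Lcal$, and then to extend the resulting solution by zero to all of $\real^n$.

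First, since $F$ is compactly supported, I would choose $R_0>0$ so that $\tn{supp}(F) \subset \Bbar_{R_0} \times [0,T]$, set $R = R_0 + T + 1$, and take $D = B_R$. Applying Proposition \ref{prop:evansm} on this $D$ with the restriction of $F$ to $D_T$ yields a weak solution $\tilde u$ on $D_T$; the higher-order regularity hypotheses transfer directly since $F$ is supported strictly inside $D$, and the matching conditions $(\pa_t^k F)(\cdot, 0)|_{\pa D} = 0$ for $k = 0, \cdots, m-2$ are automatic for the same reason. This provides $\pa_t^k \tilde u \in L^\infty(0,T; H^{m+1-k}(D))$ for $k=0, \cdots, m+1$, with the quantitative estimate of Proposition \ref{prop:evansm}.

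The crucial step is to show that $\tilde u(\cdot, t)$ is supported in $\Bbar_{R_0+t}$ for every $t \in [0,T]$, which is a finite propagation speed statement. For any $x_0$ with $|x_0| > R_0$ and any $t_0 \in (0, \min(T, |x_0|-R_0)]$, I would consider the backward truncated cone $K(x_0,t_0) = \{(x,t) : |x-x_0| \le t_0 - t, \; 0 \le t \le t_0\}$; on $K(x_0,t_0)$ we have $F=0$, so multiplying $\Lcal \tilde u = 0$ by $\tilde u_t$ and integrating over $K(x_0, t_0) \cap \{0 \le t \le s\}$ produces, via the divergence theorem, a non-negative lateral boundary contribution together with an inequality of Gr\"onwall type $E(s) \le C \int_0^s E(r) \, dr$ for the local energy $E(s) = \int_{|x - x_0| \le t_0 - s} (|\tilde u_t|^2 + |\nabla \tilde u|^2 + \tilde u^2)(x,s) \, dx$. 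Combined with $E(0)=0$, this forces $\tilde u \equiv 0$ on $K(x_0, t_0)$, establishing the support property.

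I expect this energy-on-cones argument to be the main technical obstacle, because $\tilde u$ has only finite regularity and the manipulation must be justified either by approximation or by Steklov averages, and one has to check that the lateral boundary term of the divergence identity carries the correct sign for $\Lcal$. Granting the support property, the zero extension $u$ of $\tilde u$ to $\real^n \times [0,T]$ preserves all the Sobolev regularities, because $\tilde u$ vanishes in an open neighborhood of $\pa D \times [0,T]$; the extension satisfies $\Lcal u = F$ distributionally (on $D_T$ by construction, and elsewhere because both sides vanish there) with zero initial data; and the quantitative bound transfers since $\|u(\cdot, t)\|_{H^{m+1-k}(\real^n)} = \|\tilde u(\cdot, t)\|_{H^{m+1-k}(D)}$ and $\|\pa_t^k F\|_{L^2(0,T; H^{m-k}(\real^n))} = \|\pa_t^k F\|_{L^2(0,T; H^{m-k}(D))}$ by the choice of $D$.
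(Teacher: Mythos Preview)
Your proposal is correct and follows essentially the same route as the paper: solve the IBVP on a large ball via Proposition~\ref{prop:evansm}, use an energy estimate on truncated cones (justified because $m\ge 1$ gives $u\in H^2$, so the divergence theorem applies to the $W^{1,1}$ energy fluxes) to obtain finite propagation speed and hence vanishing near $\partial D$, and extend by zero. The paper's version is terser but structurally identical, including the observation that the matching conditions for $F$ on $\partial D$ are automatic.
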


\begin{proof}
Suppose $F$ is supported in $B_R \times [0,T]$ where $B_R$ is the origin centered ball of radius $R$. Let $D$ be the origin centered ball of radius $2R+T$. Then $F$ satisfies the conditions of Proposition \ref{prop:evansm} so
the IBVP \eqref{eq:evansde} - \eqref{eq:evansbc} has a solution $u$ with $ \pa_t^k u \in L^\infty(0,T; H^{m+1-k}(D))$ for $k=0,1, \cdots, m+1$ and 
\beqn
\esssup_{0 \leq t \leq T} \; \sum_{k=0}^{m+1} \| \pa_t^k u(\cdot,t)\|_{H^{m+1-k}(D)}
 \leq C \sum_{k=0}^m  \|\pa_t^k F\|_{L^2(0,T; H^{m-k}(D))},
 \eeqn
 with $C$ determined by $T$ and $\|[a,b,q]\|_{C^m(D_T)}$.
 
 Since $m \geq 1$, we have $u \in H^2(D_T)$, so $ u_t^2 + |\nabla u|^2$ and $ u_t \nabla u$ are in 
 $W^{1,1}(D_T)$. Since the divergence theorem is valid, on regions with Lipschitz boundary, for vector fields with components in $W^{1,1}(D_T)$, using a standard energy estimate argument on a truncated cone and that
 $F=0$ outside $B_R \times [0,T]$, one can show that $u(x,t)=0$ for $R+T \leq |x| \leq 2R+T$, $0 \leq t \leq T$. Hence if we define $u=0$ for $|x| \geq 2R+T$, $0 \leq t \leq T$, then we have a solution of the IVP \eqref{eq:evansivpde}, \eqref{eq:evansivpic} with the properties claimed in Proposition \ref{prop:evansivp}. 
 \end{proof}
 
 Suppose $F$ is a distribution on $\real^n \times (-\infty, T)$ with $F=0$ for $t<0$. Consider the IVP
 \begin{align}
\Lcal u = F, & \qquad \text{on } \real^n \times (-\infty, T),
\label{eq:infde}
\\
u=0 & \qquad \text{on } \real^n \times (-\infty, 0).
\label{eq:infic}
\end{align}
We say a distribution $u$ on $\real^n \times (-\infty,T)$ is a solution of this IVP if $u =0$ for $t<0$ and
\[
\la u, \Lcal^* \phi \ra =\la F, \phi \ra, \qquad \forall \phi \in C_c^\infty(\real^n \times (-\infty, T) );
\]
here $\Lcal^*$ is the formal adjoint of $\Lcal$. We have the following well-posedness result for the IVP \eqref{eq:infde},
\eqref{eq:infic}.
\begin{prop}\label{prop:infivp}
Suppose $m$ is a positive integer, $\pa_t^k F \in L^2(-\infty, T; H^{m-k}(\real^n))$ for  $k=0, \cdots,m$, 
$F$ compactly supported and $F=0$ for $t<0$. Then the IVP \eqref{eq:infde}, \eqref{eq:infic} has a unique distributional solution $u$. Further, if $m>(n-1)/2$ then for any non-negative integer $p < m - (n-1)/2$ we have 
$u \in C^p(\real^n \times (-\infty, T])$ and
\[
\| u \|_{C^p(\real^n \times (-\infty, T])}
\leq C \sum_{k=0}^m  \|\pa_t^k F\|_{L^2(0,T; H^{m-k}(\real^n))},
 \]
 with $C$ determined by $T$ and $\|[a,b,q]\|_{C^m(\real^n \times [0,T])}$.
 \end{prop}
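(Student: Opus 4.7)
My plan is to deduce existence and the regularity estimate from Proposition \ref{prop:evansivp}, and to establish uniqueness by testing against the adjoint equation solved backward in time.

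For existence, since $F$ is compactly supported and vanishes for $t<0$, Proposition \ref{prop:evansivp} applied on $\real^n \times (0,T)$ gives a function $u$ with $\pa_t^k u \in L^\infty(0,T; H^{m+1-k}(\real^n))$ for $0 \leq k \leq m+1$, solving \eqref{eq:evansivpde}--\eqref{eq:evansivpic} with the stated Sobolev-norm bound. Extending $u$ by $0$ for $t<0$ yields a distributional solution of \eqref{eq:infde}--\eqref{eq:infic}: the vanishing of $u,u_t$ at $t=0$ plus iterated use of $\Lcal u = F$ with $F=0$ for $t<0$ shows that the time traces at $t=0$ of all derivatives of $u$ relevant to the $C^p$ norm vanish, so the extension introduces no singularity at $t=0$. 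For the pointwise $C^p$ bound I invoke the Sobolev embedding $H^s(\real^n) \hookrightarrow C^q(\real^n)$, valid for $s > q + n/2$: for any mixed partial $\pa_t^j \pa_x^\beta u$ with $j + |\beta| \leq p$, the hypothesis $p < m - (n-1)/2$ gives $m+1-j > (p-j) + n/2$, so $\pa_t^j u(\cdot,t) \in H^{m+1-j} \hookrightarrow C^{p-j}$ uniformly in $t$. Joint continuity in $(x,t)$ comes from the fact that an additional time derivative lives in a strong enough Sobolev space to give Lipschitz-in-$t$ values in a space of continuous functions in $x$. The quantitative bound is then the Sobolev embedding constant times the estimate from Proposition \ref{prop:evansivp}.

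For uniqueness, let $w$ be the difference of two distributional solutions, so $\Lcal w = 0$ on $\real^n \times (-\infty, T)$ and $w = 0$ for $t<0$. Fix $\phi \in C_c^\infty(\real^n \times (-\infty, T))$ and pick $T^* < T$ with $\mathrm{supp}(\phi) \subset \real^n \times (-\infty, T^*)$. I construct a smooth compactly supported $\psi$ satisfying $\Lcal^*\psi = \phi$ by solving the backward adjoint IVP
\[
\Lcal^* \psi = \phi \text{ on } \real^n \times (-\infty, T^*), \qquad \psi(\cdot, T^*) = 0, \quad \psi_t(\cdot, T^*) = 0,
\]
which, after the substitution $s = T^* - t$, is a forward IVP of the same type as in Proposition \ref{prop:evansivp}. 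Applied at arbitrarily large regularity index (permissible since $\phi \in C^\infty$), Proposition \ref{prop:evansivp} produces a $C^\infty$ solution $\psi$; finite speed of propagation (exactly as used in the proof of Proposition \ref{prop:evansivp}) makes $\psi$ spatially compactly supported on $\real^n \times [0, T^*]$, so $\psi \in C_c^\infty(\real^n \times (-\infty, T))$. Then by the distributional-solution property of $w$,
\[
\la w, \phi \ra = \la w, \Lcal^* \psi \ra = \la \Lcal w, \psi \ra = 0,
\]
and the arbitrariness of $\phi$ gives $w \equiv 0$.

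The main obstacle is the uniqueness step: one needs a regular, spatially compactly supported adjoint solution $\psi$ so that the duality identity genuinely pairs a distribution against a test function. Both ingredients---high regularity and spatial compactness---follow from Proposition \ref{prop:evansivp} (applied to the adjoint problem with smooth $\phi$) together with finite speed of propagation for the wave operator $\Box$ which is the principal part of both $\Lcal$ and $\Lcal^*$.
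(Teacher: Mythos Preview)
Your existence and regularity argument is essentially the paper's; the paper avoids the discussion of traces at $t=0$ by applying Proposition \ref{prop:evansivp} with initial time $-\epsilon$ rather than $0$: since $F$ vanishes on $(-\epsilon,0)$, an $H^2$ energy estimate forces $u=0$ there, so the zero extension is automatically as regular as $u$ itself, and the space--time Sobolev embedding $H^{m+1}\hookrightarrow C^p$ for $p<m-(n-1)/2$ gives the $C^p$ bound directly.

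Your uniqueness argument has a genuine gap. The backward adjoint solution $\psi$ you construct is \emph{not} in $C_c^\infty(\real^n \times (-\infty, T))$: finite speed of propagation gives spatial compactness on each bounded time slab, but $\psi$ does not vanish for large negative $t$. Once $t$ lies below the support of $\phi$, $\psi$ satisfies the homogeneous equation $\Lcal^*\psi=0$ with (generically) nonzero Cauchy data and propagates indefinitely backward in time. Thus $\psi$ is not a legitimate test function, and the identity $\langle w,\Lcal^*\psi\rangle=0$ does not follow from the definition of distributional solution. The paper repairs this exactly where you stop: multiply $\psi$ by a smooth time cutoff $\rho(t)$ with $\rho=1$ for $t>-1$ and $\rho=0$ for $t<-2$. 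Then $\rho\psi\in C_c^\infty(\real^n\times(-\infty,T))$, and $\Lcal^*(\rho\psi)$ differs from $\phi$ only on $\{-2<t<-1\}$, where $w=0$ by hypothesis; hence $\langle w,\phi\rangle=\langle w,\Lcal^*(\rho\psi)\rangle=0$ as desired.
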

\begin{proof}
If we apply Proposition \ref{prop:evansivp} with the initial condition
$u (\cdot, -\ep)=0, ~ u_t(\cdot, - \ep)=0$ for some $\ep>0$, then we are guaranteed a solution 
$u \in H^{m+1}(\real^n \times (-\ep, T))$ with
\[
\| u \|_{H^{m+1}(\real^n \times (-\ep, T) )} \leq C  \sum_{k=0}^m  \|\pa_t^k F\|_{L^2(-\ep,T; H^{m-k}(\real^n))},
\]
 with $C$ determined by $T$ and $\|[a,b,q]\|_{C^m(\real^n \times [-\ep,T])}$.
Since $m \geq 1$, we have $u \in H^2$, so by an energy estimate $u$ will be zero for $-\ep <t<0$. We extend $u$ as the zero function for the region $t \leq - \ep$; then $u$ is a distributional solution of \eqref{eq:infde}, \eqref{eq:infic}.

Suppose $m>(n-1)/2$. Noting that $u$ is compactly supported with the support determined by $T$ and the support of $F$, from the Sobolev embedding theorem, for any non-negative integer 
$p < m+1 - (n+1)/2 = m - (n-1)/2$, we have $u \in C^p(\real^n \times (-\infty, T])$ and
\[
\| u \|_{C^p(\real^n \times (-\infty, T])}
\leq C \sum_{k=0}^m  \|\pa_t^k F\|_{L^2(0,T; H^{m-k}(\real^n))},
 \]
 with $C$ determined by $T$ and $\|[a,b,q]\|_{C^m(\real^n \times [0,T])}$. 
It remains to prove the uniqueness of the distributional solution. 

Note that for $F$ regular enough, there is a $C^2$ solution of \eqref{eq:infde}, \eqref{eq:infic}. Further, a standard energy estimate shows that there is at most one $C^2$ solution. This will be important for us in our proof next of the claim that if $F=0$ then any distributional solution $u$ of \eqref{eq:infde}, \eqref{eq:infic} must be zero. 

Suppose $\phi$ is a compactly supported smooth function on $\real^n \times \real$ with support $0 \leq t \leq T$. 
Consider the backward IVP
\begin{align*}
\Lcal^* v = \phi & \qquad \real^n \times (-3,\infty)
\\
v = 0 & \qquad \text{on } t>T.
\end{align*}
Then, reversing time $t$ and using the existence part (for arbitrary large $m$) and the uniqueness of $C^2$ solutions (proved already), we know there is a smooth solution $v$ on $\real^n \times (-3, \infty)$ of this backward IVP.
Further, the restriction of $v$ to $ t \geq t_1$ is compactly supported for any $t_1>-3$.

Let $\rho(t)$ be a smooth function on $\real$ with 
\[
\rho(t) = \begin{cases} 1, & t > -1 \\ 0, & t <-2 \end{cases},
\]
and define $w(x,t) = \rho(t) v(x,t)$. Then $w$ is a compactly supported smooth function on $\real^n \times \real$ and, on the region $t>-1$ we have $\Lcal^*( w) = \Lcal^* v = \phi$. Noting that $u=0$ for $t<0$ and using the definition of a distributional solution, we have
\[
\la u, \phi \ra = \la u, \Lcal^* w \ra = 0, \qquad \forall \phi \in C_c^\infty (\real^n \times (-\infty, T) ).
\]
Hence $u=0$ on $\real^n \times (-\infty, T)$.
\end{proof}


\section{Acknowledgements}	
Rakesh's work was supported by the NSF grant DMS 1908391. Soumen Senapati gratefully acknowledges the support and hospitality received during his stay at University of Delaware in 2019
where part of this work was done.


\begin{thebibliography}{1}
	
	\bibitem{Blasten}
	Eemeli Bl\aa sten.
	\newblock Well-posedness of the {G}oursat problem and stability for point
	source inverse backscattering.
	\newblock {\em Inverse Problems}, 33(12):125003, 25, 2017.
	
	\bibitem{Bukhgeuim_Klibanov_Uniqueness_1981}
	Alexander~L. Bukhge\u{i}m and Michael~V. Klibanov.
	\newblock Uniqueness in the large of a class of multidimensional inverse
	problems.
	\newblock {\em Dokl. Akad. Nauk SSSR}, 260(2):269--272, 1981.
	
	\bibitem{Evansbook}
	Lawrence~C. Evans.
	\newblock {\em Partial differential equations}, volume~19 of {\em Graduate
		Studies in Mathematics}.
	\newblock American Mathematical Society, Providence, RI, 1998.
	
	\bibitem{KRS1}
	Venkateswaran~P. Krishnan, Rakesh, and Soumen Senapati.
	\newblock Stability for a formally determined inverse problem for a hyperbolic
	{PDE} with space and time dependent coefficients.
	\newblock {\em SIAM J. Math. Anal.}, 53(6):6822--6846, 2021.
	
	\bibitem{MPS2020}
	Crist\'{o}bal~J. Mero\~{n}o, Leyter Potenciano-Machado, and Mikko Salo.
	\newblock The fixed angle scattering problem with a first-order perturbation.
	\newblock {\em Ann. Henri Poincar\'{e}}, 22(11):3699--3746, 2021.
	
	\bibitem{RS_1}
	Rakesh and Mikko Salo.
	\newblock Fixed angle inverse scattering for almost symmetric or controlled
	perturbations.
	\newblock {\em SIAM J. Math. Anal.}, 52(6):5467--5499, 2020.
	
	\bibitem{RU2}
	Rakesh and Gunther Uhlmann.
	\newblock The point source inverse back-scattering problem.
	\newblock In {\em Analysis, complex geometry, and mathematical physics: in
		honor of {D}uong {H}. {P}hong}, volume 644 of {\em Contemp. Math.}, pages
	279--289. Amer. Math. Soc., Providence, RI, 2015.
	
\end{thebibliography}

\end{document}